\newtheorem{lemma}{Lemma}[section]
\newtheorem{theorem}[lemma]{Theorem}
\newtheorem*{theorem*}{Theorem}
\newtheorem{corollary}[lemma]{Corollary}
\newtheorem{proposition}[lemma]{Proposition}
\newtheorem*{conjecture*}{Conjecture}
\newtheorem*{proposition*}{Proposition}
\newtheorem{conjecture}{Conjecture}
\theoremstyle{definition}
\newtheorem*{claim*}{Claim}
\newtheorem{definition}[lemma]{Definition}
\newtheorem*{remark}{Remark}
\newtheorem*{remarks}{Remarks}
\newcommand{\lE}{\mathbb{E}^{\log}}
\newcommand{\C}{{\mathbb C}}
\newcommand{\D}{{\mathbb D}}
\newcommand{\E}{{\mathbb E}}
\newcommand{\N}{{\mathbb N}}
\renewcommand{\P}{{\mathbb P}}
\newcommand{\R}{{\mathbb R}}
\newcommand{\T}{{\mathbb T}}
\newcommand{\Z}{{\mathbb Z}}
\newcommand{\CX}{{\mathcal X}}
\newcommand{\CK}{{\mathcal K}_{\text{rat}}}
\newcommand{\ux}{\underline x}
\newcommand{\bM}{{\mathbf{M}}}
\newcommand{\bN}{{\mathbf N}}
\newcommand{\bmu}{{\boldsymbol{\mu}}}
\newcommand{\wt}{\widetilde}
\newcommand{\e}{\mathrm{e}}% for the function exp
\DeclareMathOperator{\id}{id}
\DeclareMathOperator{\reel}{Re}
\renewcommand{\Re}{\reel}
\begin{document}

	\title[Furstenberg systems of  multiplicative functions and applications]{Furstenberg systems of bounded multiplicative functions and applications}

%%\title{M\"obius disjointness of totally ergodic systems}	

\author{Nikos Frantzikinakis}
\address[Nikos Frantzikinakis]{University of Crete, Department of mathematics, Voutes University Campus, Heraklion 71003, Greece} \email{frantzikinakis@gmail.com}
\author{Bernard Host}
\address[Bernard Host]{
	Universit\'e Paris-Est Marne-la-Vall\'ee, Laboratoire d'analyse et
	de math\'ematiques appliqu\'ees, UMR CNRS 8050, 5 Bd Descartes,
	77454 Marne la Vall\'ee Cedex, France }
\email{bernard.host@u-pem.fr}

\begin{abstract}
	We prove a structural result for measure preserving systems naturally associated with any finite collection of multiplicative functions that take values on the complex unit disc. We show that these systems have no irrational spectrum and  their building blocks are Bernoulli systems and infinite-step nilsystems.
	One consequence of our structural result is that strongly aperiodic multiplicative functions
	satisfy the logarithmically averaged variant of the disjointness conjecture of Sarnak  for a wide class of zero entropy topological dynamical systems, which includes all uniquely ergodic ones. We deduce that aperiodic
	multiplicative functions with values plus or minus one have super-linear  block growth.
	Another consequence of our structural result is
	that  products of shifts of arbitrary  multiplicative functions with values on the unit disc do not correlate with  any  totally ergodic deterministic sequence of zero mean.
	Our methodology is based primarily on techniques developed in a previous article of the authors where analogous results were proved for the M\"obius and the Liouville function. A  new ingredient needed is a result obtained recently by  Tao and  Ter\"av\"ainen related to the  odd order cases of the Chowla conjecture.
\end{abstract}

\subjclass[2010]{Primary: 11N37; Secondary:  37A45,   11K65. }

\keywords{Multiplicative functions, Sarnak conjecture, Elliott conjecture, Chowla conjecture, Furstenberg correspondence,  block complexity.}  %%M\"obius disjointness}

\maketitle

\section{Introduction and main results}
\subsection{Main results} A function $f\colon \mathbb{N}\to \mathbb{C}$ is called \emph{multiplicative} if
$$
f(mn)=f(m)f(n) \ \text{ whenever } \  (m,n)=1.
$$
 Perhaps the most   well-known example of a bounded multiplicative function is the
  M\"obius function,   which
  is defined to be $0$ on integers divisible by a square,
   $-1$ on square-free  integers with an odd number of prime factors, and $1$ elsewhere.
Its  non-zero values are expected to fluctuate between $-1$ and $1$ in  a random way and many famous conjectures have been formulated based on this belief. One example that has received a lot of attention in recent years is the M\"obius disjointness conjecture of  Sarnak~\cite{Sa, Sa12}. It   asserts that  the M\"obius function  does not correlate with any bounded deterministic sequence, meaning, any sequence that is produced by   a continuous function evaluated along the orbit of a point in a zero entropy topological dynamical system.

 In  \cite{FH18} we
verified the logarithmically averaged variant of the conjecture  of Sarnak for a wide class of deterministic sequences. Our approach was to study  measure preserving systems (which we call Furstenberg systems) naturally associated with the M\"obius function; in particular, we studied structural properties that
  allow to deduce disjointness  from  a wide class of zero entropy systems. Various interesting results, establishing non-correlation of deterministic sequences with the M\"obius function and products of its shifts,  are natural consequences
  of these disjointness results.

The main purpose of this article is to extend the approach  from \cite{FH18} in order to cover    multiplicative functions
that take   values on the complex unit disc $\mathbb{U}:=\{z\in \C\colon |z|\leq 1\}$. %%that are expected to behave randomly.
Our first main result concerns a class of multiplicative functions that are expected to satisfy similar disjointness properties
 as   the M\"obius function. These are the
 strongly aperiodic multiplicative functions
 (see Definition~\ref{D:StronglyAperiodic}), and we
  verify that they do not correlate  with a wide class of deterministic sequences.
  Throughout the paper, we denote by $(Y,R)$ a \emph{topological dynamical system}, meaning, a compact metric space $Y$ together with a continuous homeomorphism $R\colon Y\to Y$.
\begin{theorem}\label{T:Sarnak1}
	Let $f\colon\N\to \mathbb{U}$ be a strongly  aperiodic multiplicative function.
	Let $(Y,R)$ be a topological  dynamical system
	 with zero topological entropy and at most countably many  ergodic invariant measures. Then for every  $y\in Y$  and  every $g\in C(Y)$  we have
	$$
	\lim_{N\to\infty} \frac 1{\log N}\sum_{n=1}^N\frac{g(R^ny)\, f(n)}{n} =0.
	$$
		Furthermore, the convergence is uniform in $y\in Y$.
\end{theorem}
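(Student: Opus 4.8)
The plan is to reduce Theorem~\ref{T:Sarnak1} to a statement about the Furstenberg systems of $f$ via logarithmic correspondence, and then exploit the structural result that these systems have no irrational spectrum and are built from Bernoulli and infinite-step nilsystems. First I would fix a logarithmic averaging sequence $(N_k)$ along which the relevant logarithmic averages of $g(R^n y)f(n)$ converge (and along which the empirical measures converge to some Furstenberg system $(X,\mu,T)$ of $f$, where $f$ is realized as an $L^\infty$ function $F$ on $X$); by a standard subsequence argument it suffices to show every such limit is $0$. The joint system one builds is a joining of $(X,\mu,T)$ with the orbit closure of $y$ in $(Y,R)$, and the quantity to control is $\int F\otimes g\, d\nu$ for an appropriate joining $\nu$.

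The key step is to use the structural theorem: since $(Y,R)$ has zero topological entropy, the factor of $(Y,R)$ coming from the joining is a zero-entropy system, while the Bernoulli components of $X$ have positive entropy and hence are relatively independent from anything in $(Y,R)$ — so the only part of $F$ that can correlate with $g(R^n y)$ is the part measurable with respect to the infinite-step nilfactor (the maximal zero-entropy distal-type factor). Because strong aperiodicity forces $f$ to have no rational (in fact no Archimedean) spectrum, and the structural result says the Furstenberg system has no irrational spectrum either, the nilfactor contribution is governed by its Kronecker factor, which is trivial; more precisely, the relevant part of $F$ has zero conditional expectation onto the Kronecker factor of the joining. One then invokes that a nilsystem with trivial Kronecker factor is trivial, or more directly that the projection of $F$ onto the nilfactor is orthogonal to every eigenfunction, combined with the fact that the joining with the zero-entropy system $(Y,R)$ — having at most countably many ergodic components — decomposes into pieces on which $g(R^n y)$ behaves like (a combination of) eigenfunctions. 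This is where the hypothesis of at most countably many ergodic invariant measures enters: it guarantees the ergodic decomposition of the joined system is discrete, so one can argue componentwise and bring to bear the classification of zero-entropy uniquely ergodic (hence, after decomposition, ergodic) factors.

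The main obstacle I anticipate is precisely the interaction between the Bernoulli part and the zero-entropy system $(Y,R)$: one must show that in any joining of $(X,\mu,T)$ with a zero-entropy system, the Pinsker-type (positive entropy) factor of $X$ is relatively independent of $Y$ over the trivial factor, so that only the zero-entropy (nilfactor) part of $F$ survives the correlation. This requires a careful disintegration argument — passing to the ergodic decomposition, using that a zero-entropy factor of a joining cannot "see" a Bernoulli factor, and handling the fact that $F$ is not itself of zero entropy type but splits as a sum of a nil-part and a "Bernoulli-measurable" part plus lower-order terms. The uniformity in $y$ should come for free from a compactness/contradiction argument: if uniformity failed there would be a sequence $y_k$ and averaging parameters witnessing a bounded-below correlation, and passing to a limit of the joint empirical measures produces a single joining contradicting the qualitative statement just proved.
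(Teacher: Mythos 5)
Your overall strategy --- correspondence principle along a subsequence, the structural theorem, a joining with $(Y,\nu,R)$, disjointness of the Bernoulli part from zero entropy, and a compactness/contradiction argument for uniformity --- is the same as the paper's. But there is a genuine gap at the one step where strong aperiodicity must enter. You write that ``strong aperiodicity forces $f$ to have no rational (in fact no Archimedean) spectrum'' and conclude from this that the relevant part of $F$ is orthogonal to the Kronecker factor. Strong aperiodicity is a hypothesis about the pretentious distance of $f$ from $\chi(n)n^{it}$; what is actually needed is that the generating function $F_0$ of the Furstenberg system is orthogonal to the Kronecker factor, which by the spectral theorem amounts to $\E_{h\in\N}\bigl|\lE_{n\in\bN}\, f(n)\,\overline{f(n+h)}\bigr|=0$. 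Passing from the pretentious-distance hypothesis to the vanishing of these two-point self-correlations is precisely Tao's logarithmically averaged two-point Elliott theorem (Theorem~\ref{T:Tao}, i.e.\ \cite[Corollary~1.5]{Tao15}), a deep external input that your sketch never invokes. The inference is not formal: the remark following Theorem~\ref{T:Tao} records an aperiodic $f$ --- hence one that does not correlate with any periodic sequence --- whose two-point self-correlations stay bounded away from zero along some $\bN$. So ``$f$ does not correlate with characters'' does not by itself yield the needed orthogonality, and without Theorem~\ref{T:Tao} the proof does not close.

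A second, related confusion: you argue that the nilfactor contribution dies because ``its Kronecker factor is trivial'' and ``a nilsystem with trivial Kronecker factor is trivial.'' The Kronecker factors of the ergodic components of the Furstenberg system need not be trivial --- they are procyclic in general, since the structure theorem only excludes \emph{irrational} spectrum --- and your argument, if valid, would show the Furstenberg system is Bernoulli, i.e.\ essentially the logarithmic Chowla conjecture, which is far beyond what is known. What is actually used is Proposition~\ref{P:disjoint}(i) (from \cite[Proposition~3.12]{FH18}): for a system with no irrational spectrum whose ergodic components are products of infinite-step nilsystems and Bernoulli systems, any function orthogonal to the \emph{rational} Kronecker factor has vanishing correlation, in every joining, with a zero-entropy system having at most countably many ergodic components. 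Moreover, this is applied not to $(X,\mu,T)$ itself but to the extension $(X^\Z,\wt\mu_d,S)$ of Definition~\ref{D:tilde}, after lifting the joining through the factor map of Proposition~\ref{P:factor}, since that extension is the system for which the structural properties are actually proved.
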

\begin{remarks}
$\bullet$
Using rotations on finite cyclic groups,  one deduces that non-correlation (using logarithmic averages) of $f$ with all periodic sequences (which implies strong aperiodicity in the real valued case) is a necessary assumption    for the conclusion to hold.

$\bullet$ We believe that the countability assumption on the number of ergodic invariant measures  of $(Y,R)$ can be dropped.
In the case where $f$ is  the M\"obius function, this is equivalent to the logarithmically averaged variant of the
Sarnak conjecture.
\end{remarks}
An interesting consequence of the previous result  is a statement about the block complexity
 of  multiplicative functions $f\colon \N\to \mathbb{U}$ that have finite range.  In the next statement we denote by $P_f(n)$
     the number of   patterns of size $n$  that  are taken by consecutive values of  $f$ (see Section~\ref{SS:Superlinear} for a more formal definition).
\begin{theorem}\label{T:superlinear}
	If the  multiplicative function  $f\colon\N\to \mathbb{U}$ has finite range,  is  strongly  aperiodic,  and does not converge to zero in logarithmic density, then $\lim_{n\to\infty}\frac{P_f(n)}{n}=\infty$.
\end{theorem}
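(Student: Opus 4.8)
The plan is to derive the superlinear block growth from Theorem~\ref{T:Sarnak1} by a contrapositive/contradiction argument, exploiting the standard correspondence between block complexity and entropy of the subshift generated by $f$. Suppose, for contradiction, that $\liminf_{n\to\infty} P_f(n)/n = C < \infty$ along some subsequence (or even that $P_f(n)=O(n)$), while $f$ is strongly aperiodic, has finite range, and does not converge to zero in logarithmic density. Since $f$ takes values in a finite alphabet $\Lambda\subset\mathbb{U}$, we may regard the orbit closure $X\subset\Lambda^{\N}$ of $f$ under the shift $S$ as a topological dynamical system $(X,S)$. The first step is to observe that linear (indeed subexponential) block growth forces $(X,S)$ to have zero topological entropy: by the classical formula $h_{\mathrm{top}}(X,S)=\lim_{n\to\infty}\frac{\log P_f(n)}{n}$, the bound $P_f(n)=O(n)$ gives $h_{\mathrm{top}}(X,S)=0$.

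The second and more delicate step is to arrange that $(X,S)$ has at most countably many ergodic invariant measures, so that Theorem~\ref{T:Sarnak1} applies. This does not follow from linear complexity alone for the full orbit closure, but it is known (work of Cyr--Kra and others on subshifts of linear complexity) that a subshift with $\liminf P_f(n)/n < \infty$ has only finitely many ergodic measures, or, with a slightly weaker complexity hypothesis, at most countably many; alternatively one restricts attention to a suitable minimal or transitive subsystem. Granting this, Theorem~\ref{T:Sarnak1} yields, for every $g\in C(X)$ and every $x\in X$,
$$
\lim_{N\to\infty}\frac{1}{\log N}\sum_{n=1}^{N}\frac{g(S^n x)\,f(n)}{n}=0.
$$
Now apply this with $x=f$ itself (viewed as a point of $X$) and with $g$ the coordinate-zero evaluation $g((y_n)_{n})=\overline{y_0}$, so that $g(S^n f)=\overline{f(n)}$. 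We obtain
$$
\lim_{N\to\infty}\frac{1}{\log N}\sum_{n=1}^{N}\frac{|f(n)|^2}{n}=0.
$$
Since $f$ has finite range, $|f(n)|^2$ is bounded away from zero on the set where $f(n)\neq 0$, so this says exactly that $f$ converges to $0$ in logarithmic density, contradicting our hypothesis. This contradiction shows $P_f(n)/n$ cannot stay bounded; to upgrade ``unbounded'' to $\lim_{n\to\infty}P_f(n)/n=\infty$ one uses the Morse--Hedlund-type rigidity: a subshift for which $P_f(n)\le Cn$ holds for infinitely many $n$ already has finitely many ergodic measures and zero entropy, so the argument above applies verbatim along that subsequence and produces the same contradiction; hence $P_f(n)/n\to\infty$.

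The main obstacle, I expect, is the passage from ``linear (or sublinear-along-a-subsequence) complexity'' to ``countably many ergodic invariant measures'' in a form clean enough to feed into Theorem~\ref{T:Sarnak1}: one must cite or reprove the relevant structure theory for low-complexity subshifts, and be careful that the hypothesis ``does not converge to zero in logarithmic density'' is used only at the very end (it is what makes the coordinate function $g$ detect a genuine obstruction). A secondary technical point is the choice of base point and the uniformity statement in Theorem~\ref{T:Sarnak1}, which is convenient here because it lets us evaluate at the non-generic point $x=f$ without first passing to a generic point for some invariant measure. Everything else—the entropy formula, the identification of $g(S^n f)$ with $\overline{f(n)}$, and the finite-range reduction of $|f(n)|^2$ to the indicator of $\{f(n)\ne 0\}$—is routine.
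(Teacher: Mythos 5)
Your proposal is correct and follows essentially the same route as the paper: assume linear block growth for contradiction, pass to the (transitive) orbit closure of $f$, note it has zero topological entropy and, by the Cyr--Kra result on linear-complexity subshifts (Proposition~\ref{P:lineargrowth} in the paper), only finitely many ergodic invariant measures, then apply Theorem~\ref{T:Sarnak1} with the conjugated coordinate-zero function to conclude $\lE_{n\in\N}|f(n)|^2=0$, contradicting the non-vanishing hypothesis. The points you flag as potential obstacles are non-issues here: the paper defines linear block growth via $\liminf$, so the negation of the conclusion is exactly the hypothesis fed to Cyr--Kra, and Theorem~\ref{T:Sarnak1} holds for every point $y\in Y$, so no genericity of the base point is needed.
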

\begin{remarks}
	$\bullet$ 	
	 In fact, we establish a stronger statement, if $a\colon \N\to \C$ has finite range and linear block growth, then  $\lim_{N\to\infty}\frac{1}{\log{N}}\sum_{n=1}^N
	 \frac{a(n) \, f(n)}{n}=0$ for every strongly aperiodic  multiplicative function $f\colon \N\to\mathbb{U}$. Thus,  even if we modify the values of $f$ on a set of logarithmic density $0$, using values taken from a finite set of complex numbers,  the new sequence still has super-linear block growth.

$\bullet$ The assumptions are satisfied if
$f$ takes only the values  $\pm 1$ and   is aperiodic, meaning, it does not correlate with any periodic sequence.
Previously,  it was not even known
that for such multiplicative functions we have  $\lim_{n\to \infty}(P_f(n)-n)=\infty$. On the other hand, a conjecture of  Elliott~\cite{El90,El94} predicts if $f\colon \N\to \{-1,1\}$ is aperiodic, then  $P_f(n)=2^n$ for every $n\in\N$, and if $f\colon \N\to\mathbb{U}$ has finite range, is strongly aperiodic, and does not converge to zero in logarithmic density,  then  $P_f(n)$ grows exponentially.
\end{remarks}
Henceforth, whenever needed, we  assume that a multiplicative function $f\colon \N\to \mathbb{U}$ is extended to the integers in an arbitrary way.

In  the next result if  $(Y,R)$ is a topological dynamical system, we say that  a point  $y\in Y$ is generic for logarithmic averages for a Borel probability measure $\nu$ on $Y$  if for every $g\in C(Y)$ we have
$
\lim_{N\to\infty}	\frac{1}{\log{N}}\sum_{n=1}^N \frac{g(R^ny)}{n}=\int g\, d\nu.
$
Our methods also allow us to prove  non-correlation
between products of shifts of arbitrary multiplicative functions with values on the unit disc and totally ergodic deterministic sequences of zero mean.
\begin{theorem}\label{T:Sarnak2}
	Let  $f_1,\ldots,f_\ell\colon\N\to \mathbb{U}$ be  multiplicative functions.
	Let $(Y,R)$ be a topological  dynamical system
	 and let
	$y\in Y$ be   generic for logarithmic averages for a  measure $\nu$
	with zero  entropy and at most countably many  ergodic components,  all of which are totally ergodic.
	Then  for every $g\in C(Y)$  that is orthogonal in $L^2(\nu)$ to all $R$-invariant functions
 we have
	\begin{equation}\label{E:weightedElliott}
	\lim_{N\to\infty}		\frac 1{\log N}\sum_{n=1}^N\frac{g(R^ny)\, \prod_{j=1}^\ell f_j(n+h_j)}{n}=0
	\end{equation}
	for all   $h_1,\dots,h_\ell \in\Z$.
\end{theorem}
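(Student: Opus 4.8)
The plan is to run the Furstenberg correspondence principle for logarithmic averages, replacing the left-hand side of \eqref{E:weightedElliott} by an integral over a joining of the Furstenberg system of $(f_1,\dots,f_\ell)$ with $(Y,\nu,R)$, and then to annihilate that integral using the structural theorem of the paper: the zero-entropy hypothesis on $(Y,\nu,R)$ neutralises the Bernoulli part of that Furstenberg system, while the total ergodicity of the ergodic components of $\nu$, together with the absence of irrational spectrum, neutralises the pronilsystem part. So, fix $h_1,\dots,h_\ell\in\Z$ and suppose for contradiction that the $\limsup$ of the absolute value of the averages in \eqref{E:weightedElliott} equals some $\delta>0$. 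Passing to a subsequence $N_k\to\infty$ realising this $\limsup$, along which moreover all logarithmic averages of products of integer shifts of $(f_1,\dots,f_\ell)$ converge, we obtain a Furstenberg system $(X,\mu,S)$ for $(f_1,\dots,f_\ell)$ (with respect to logarithmic averages along $(N_k)$), equipped with continuous coordinate functions $F_j\colon X\to\mathbb{U}$ recording the value of $f_j$. Since $y$ is generic for logarithmic averages for $\nu$, a further diagonal extraction yields a joining $\lambda$ of $(X,\mu,S)$ with $(Y,\nu,R)$ for which $\bigl|\int F\cdot\overline{g}\,d\lambda\bigr|=\delta$, where $F:=\prod_{j=1}^{\ell}F_j\circ S^{h_j}$ and $g$ also denotes the pullback of $g$ to the joining. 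As $\int g\,d\nu=0$, it suffices to prove that $\int F\cdot\overline{g}\,d\lambda=0$ for every joining $\lambda$ arising in this way.

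\emph{Removing the Bernoulli part.} By the structural theorem, $(X,\mu,S)$ is a relatively Bernoulli extension of a factor $(Z,\rho,T)$ whose ergodic components are infinite-step nilsystems and which has no irrational spectrum, and $Z$ is the Pinsker factor of $X$. Since $(Y,\nu,R)$ has zero entropy and $X$ is a relatively Bernoulli extension of $Z$, in any joining of $X$ with $Y$ the $\sigma$-algebras $\CX$ and $\CY$ are relatively independent over $\CZ$; hence $\int F\cdot\overline{g}\,d\lambda=\int\Phi\cdot\overline{g}\,d\lambda'$, where $\Phi:=\E_\mu[F\mid\CZ]\in L^{2}(\rho)$ and $\lambda'$ is the joining of $(Z,\rho,T)$ with $(Y,\nu,R)$ induced by $\lambda$. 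We are reduced to the structured factor $Z$.

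\emph{Removing the structured part.} Using the absence of irrational spectrum, $L^{2}(\rho)$ is the closed span of eigenfunctions of $Z$ with root-of-unity eigenvalues; write $\Phi=\E_\mu[F\mid\CI_{X}]+\Phi_{1}$, where $\CI_{X}$ is the $\sigma$-algebra of $S$-invariant sets (i.e.\ the eigenspace for eigenvalue $1$) and $\Phi_1$ lies in the closed span of eigenfunctions with eigenvalue $\neq1$. For $\Phi_1$: decomposing $\lambda'$ over the at most countably many ergodic components of $\nu$ and using that each is totally ergodic, the conditional expectation onto $\CY$ of any eigenfunction of $Z$ with a nontrivial root-of-unity eigenvalue vanishes, so $\int\Phi_1\cdot\overline{g}\,d\lambda'=0$. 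For $\E_\mu[F\mid\CI_{X}]$: transporting it back through the correspondence, one must show that the $S$-invariant part of $F$ — which encodes the logarithmic mean values of the products $\prod_j f_j(n+h_j)$ and is governed by the Archimedean characters $n\mapsto n^{it}$ — does not correlate with $g(R^ny)$; here the hypothesis $\int g\,d\nu=0$ handles the constant part, and the remaining part is controlled by the fact that the $Y$-system is deterministic and totally ergodic while $n^{it}$-type sequences are rigid (their logarithmic Furstenberg systems carry the identity transformation). Combining both parts gives $\int F\cdot\overline{g}\,d\lambda=0$, the desired contradiction.

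\emph{Main obstacle.} The passage to a joining and the removal of the Bernoulli part are routine adaptations of the correspondence and relative-disjointness arguments of \cite{FH18}; it is precisely because the structural theorem applies to the whole collection $(f_1,\dots,f_\ell)$, and not merely to a single function, that the product $\prod_j f_j(n+h_j)$ can be treated in one stroke. The crux is the handling of the $S$-invariant part of $F$ in the last step: one must show that the structured factor $Z$ — despite having only root-of-unity spectrum — is, in the sense dictated by the correspondence-joining, disjoint from deterministic systems all of whose ergodic components are totally ergodic. This forces one to work with the (highly non-ergodic) ergodic decompositions of $\mu$, $\nu$, and $\lambda'$ simultaneously, and to bring in an input from the theory of mean values of multiplicative functions to exclude correlation with Archimedean characters; the countability of the ergodic decomposition of $\nu$ and the zero-mean hypothesis on $g$ are both used at this point.
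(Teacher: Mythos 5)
Your overall architecture (argue by contradiction, pass to a subsequence, build a joining $\sigma$ of the joint Furstenberg system of $f_1,\ldots,f_\ell$ with $(Y,\nu,R)$, then annihilate the resulting integral via the structure theorem) matches the paper's, and treating the product $\prod_j f_j(n+h_j)$ as a single observable on the \emph{joint} Furstenberg system is exactly the right move. But the step you yourself flag as the crux --- the $S$-invariant part of $F$ --- is a genuine gap, and the mechanism you propose to close it does not work. For a general joining $\lambda'$ of two non-ergodic systems, the invariant $\sigma$-algebras of the two factors are both contained in the invariant $\sigma$-algebra of $\lambda'$ and can be heavily correlated (two copies of the identity transformation are the standard example of non-disjoint zero-entropy systems). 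Consequently $\int \E_\mu[F\mid\CI_X]\cdot\overline g\,d\lambda'$ is a weighted average of the integrals $\int \overline g\,d\nu_\omega$ over the ergodic components $\nu_\omega$ of $\nu$, with weights depending on $\omega$, and the hypothesis $\int g\,d\nu=0$ does not make these componentwise integrals vanish. The appeal to ``rigidity of $n^{it}$-type sequences'' is not an argument at this point: the invariant factor of the Furstenberg system is an abstract non-ergodic system, and nothing you have said prevents its invariant functions from coupling nontrivially with the ergodic decomposition of $\nu$ under $\lambda'$.

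The paper avoids this decomposition entirely. It lifts the joining to the extension $(X^\Z,\wt\mu_d,S)$ of Proposition~\ref{P:factor}, observes that by Theorem~\ref{T:Structure-mud} this system has only rational spectrum while every ergodic component of $\nu$ is totally ergodic, so the two systems share no eigenvalue except $1$, and then invokes part (ii) of Proposition~\ref{P:disjoint} to conclude that they are \emph{disjoint}. Disjointness forces $\sigma=\mu\times\nu$, and only then is $\int g\,d\nu=0$ used --- applied to the global mean of $g$, never componentwise. The countability of the ergodic decomposition of $\nu$ enters precisely as a hypothesis of that disjointness proposition; in your write-up it is announced but never actually used. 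A secondary inaccuracy: Theorem~\ref{T:structure} only says the Furstenberg system is a \emph{factor} of a system whose ergodic components are products of infinite-step nilsystems and Bernoulli systems; these properties are not obviously inherited by factors, which is why the paper works with the extension $(X^\Z,\wt\mu_d,S)$ rather than asserting, as you do, that $(X,\mu,S)$ itself is a relatively Bernoulli extension of a structured Pinsker factor.
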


\begin{remarks}	
$\bullet$	The unweighted version of \eqref{E:weightedElliott}  (take $g:=1$)  is expected to hold  if
	the shifts are distinct and at least one of the multiplicative functions is strongly aperiodic. This is the logarithmically averaged variant of  a  conjecture of Elliott~\cite{El90,El94} (see \cite[Theorem~B.1]{MRT15} for a corrected version and the  need to assume strong aperiodicity).

$\bullet$ 	If  $(Y,R)$ has zero topological entropy and is uniquely and totally ergodic, then it is easy to deduce from Theorem~\ref{T:Sarnak2} that
\eqref{E:weightedElliott} holds for all $g\in C(Y)$ and $y\in Y$ such that
$\lim_{N\to\infty}\frac 1{\log N}\sum_{n=1}^N\frac{g(R^ny)}{n}=0$.

$\bullet$  If $(Y,\nu,R)$ is totally ergodic, then using an approximation argument one can also conclude that \eqref{E:weightedElliott} holds for every $g\colon Y\to \C$ that is Riemann integrable with respect to the measure $\nu$ and $\int g\, d\nu=0$.\footnote{We say that a function $g\colon Y\to \C$ is
	Riemann integrable with respect to the measure $\nu$ if  for every $\varepsilon>0$ there exist $g_1,g_2\in C(Y)$ such that
	$g_1(y)\leq g(y)\leq g_2(y)$ for every $y\in Y$ and $\int (g_2-g_1)\, d\nu\leq \varepsilon$.}
	\end{remarks}
Theorem~\ref{T:Sarnak2}  is new even in the very special case where $R$ is given by  an irrational rotation on $\T$ and $g(t):=\e^{2\pi i t}$, $t\in \T$. In this case we have $g(R^n0)=\e^{2\pi i n\alpha}$, $n\in\N$,  for some irrational $\alpha$, and we get the following result as a consequence:
\begin{corollary}
	\label{C:main-2}
		Let  $f_1,\ldots,f_\ell\colon\N\to \mathbb{U}$ be  multiplicative functions and let
	 $\alpha\in \R$ be irrational. Then
	\begin{equation} \label{E:irrational}
	\lim_{N\to\infty}		\frac 1{\log N}\sum_{n=1}^N\frac{\e^{2\pi  i n \alpha}\, \prod_{j=1}^\ell f_j(n+h_j)}{n}=0
	\end{equation}
	for all   $h_1,\dots,h_\ell \in\Z$. 	
\end{corollary}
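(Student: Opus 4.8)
The plan is to obtain the corollary as the special case of Theorem~\ref{T:Sarnak2} in which the topological dynamical system is an irrational circle rotation. Concretely, I would take $Y:=\T=\R/\Z$, let $R\colon Y\to Y$ be the rotation $Rt:=t+\alpha \bmod 1$, put $y:=0$, and let $g(t):=\e^{2\pi i t}$. Then $g\in C(Y)$ and $g(R^n y)=\e^{2\pi i n\alpha}$ for every $n\in\N$, so the average on the left of \eqref{E:irrational} is literally the average on the left of \eqref{E:weightedElliott} for this choice of $(Y,R)$, $y$, and $g$. It therefore suffices to check that $(Y,R)$ and $y$ satisfy the hypotheses of Theorem~\ref{T:Sarnak2} and that $\int g\, d\nu=0$.

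Next I would verify the dynamical hypotheses. By Weyl's equidistribution theorem the rotation $R$ is uniquely ergodic, the Haar (Lebesgue) measure $\nu$ on $\T$ being its unique invariant Borel probability measure; in particular the orbit of $0$ is equidistributed for $\nu$, so $0$ is generic for Cesàro averages, and hence also generic for logarithmic averages for $\nu$, since convergence of $\frac1N\sum_{n\le N}g(R^ny)$ implies convergence of $\frac1{\log N}\sum_{n\le N}\frac{g(R^ny)}{n}$ to the same limit. Unique ergodicity means the system $(Y,\nu,R)$ has a single ergodic component (itself), so the countability requirement is trivially met. It is totally ergodic: for each $k\ge 1$ the iterate $R^k$ is the rotation by $k\alpha$, which is again irrational, hence ergodic for $\nu$. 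Finally $R$ is an isometry of the compact metric space $Y$, hence equicontinuous, so it has zero topological entropy, and in particular $(Y,\nu,R)$ has zero measure-theoretic entropy. Since $\int g\, d\nu=\int_0^1\e^{2\pi i t}\,dt=0$, Theorem~\ref{T:Sarnak2} applies and gives \eqref{E:weightedElliott}, i.e.\ \eqref{E:irrational}, for all $h_1,\dots,h_\ell\in\Z$.

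There is essentially no obstacle here: the content is entirely in Theorem~\ref{T:Sarnak2}, and the corollary is a direct specialization. The only two points deserving a word of care are the ones handled above, namely the passage from Cesàro to logarithmic genericity of the point $0$, and the total ergodicity of $\nu$ (which uses irrationality of $\alpha$ through the irrationality of each $k\alpha$); both are standard. One could also remark, as a sanity check, that the irrationality hypothesis on $\alpha$ is genuinely needed, since for rational $\alpha$ the sequence $\e^{2\pi i n\alpha}$ is periodic and \eqref{E:irrational} can fail (e.g.\ $\ell=1$, $f_1\equiv 1$, $\alpha=0$).
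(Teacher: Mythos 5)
Your proposal is correct and is exactly the paper's argument: the corollary is stated as a direct consequence of Theorem~\ref{T:Sarnak2} applied to the irrational rotation $R$ on $\T$ with $g(t)=\e^{2\pi i t}$ and $y=0$, and the routine verifications you supply (unique ergodicity, total ergodicity via irrationality of each $k\alpha$, zero entropy, genericity for logarithmic averages from Ces\`aro genericity, and $\int g\,d\nu=0$) are precisely what the paper leaves implicit.
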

\begin{remarks} $\bullet$ For $\ell=1$ the result is
	the logarithmically averaged variant of a classical result of  Daboussi~\cite{D74, DD74, DD82}. But even for $\ell=2$ the result is new.
	
	$\bullet$	 More generally, if we  apply Theorem~\ref{T:Sarnak2} for $R$  given by appropriate totally ergodic affine transformations on a torus with the Haar measure (as in \cite[Section~3.3]{Fu77}), we get that
	\eqref{E:irrational} holds with $(\e^{2\pi i n\alpha})_{n\in\N}$ replaced by any sequence of the form  $(\e^{2\pi i P(n)})_{n\in \N}$, where $P\in \R[t]$ has at least one non-constant coefficient   irrational.
	Moreover,  one could use as weights zero mean sequences arising from more general totally ergodic nilsystems, giving rise to generalized polynomial sequences. One such example is  the sequence $(e^{2\pi i [n\alpha]n\beta})_{n\in\N}$, where $\alpha,\beta \in \R$ are rationally independent. In order to establish this variant, one has to use
  Theorem~\ref{T:Sarnak2} (see the last remark following this result) for a totally ergodic nilsystem $(Y,R)$ defined on the Heisenberg nilmanifold  and an appropriate  Riemann integrable function  $g$ with respect to the Haar measure on $Y$ with zero integral (see~\cite[Section~0.14]{BL07} for details).
\end{remarks}

A key step in the proof of the previous results
is   a
structural result for measure preserving systems naturally associated with any collection of multiplicative functions that take values on the complex unit disc. We call such systems \emph{Furstenberg systems}, and they  are defined as follows: For convenience, let $f$ be a multiplicative function that takes values on a finite subset $A$ of $\mathbb{U}$ and admits correlations for logarithmic averages  on a sequence of intervals $\bN=([N_k])_{k\in\N}$ with $N_k\to\infty$ (see Definition~\ref{D:correlations}).
Then the   Furstenberg system  associated with $f$ and $\bN$ is defined on the  sequence space $X:=A^\Z$ with the shift transformation,
by a measure that assigns to each  cylinder set $\{x\in X\colon x(j)=a_j,j=-m,\ldots,m\}$ value equal to the logarithmic density, taken along the sequence $\bN$,  of the set $\{n\in \N\colon f(n+j)=a_j,j=-m,\ldots,m\}$, where
 $a_{-m},\ldots, a_m\in A$ and  $m\in \N$.
 Similarly, one defines Furstenberg systems associated with any finite collection of multiplicative functions $f_1,\ldots, f_\ell\colon \N\to \mathbb{U}$ and a sequence of intervals $\bN$ on which $f_1,\ldots, f_\ell$  admit correlations for logarithmic averages;  we call these measure preserving systems \emph{joint Furstenberg systems}.
 The precise constructions  are given in
 Section~\ref{SS:Furst} and are motivated by analogous constructions made by Furstenberg  in \cite{Fu77,Fu} in order to restate  Szemer\'edi's theorem on arithmetic progressions in ergodic terms.
  We prove the following structural result for joint Furstenberg systems of multiplicative functions:
\begin{theorem}
	\label{T:structure}
	A	joint Furstenberg system
	of  the multiplicative functions $f_1,\ldots, f_\ell \colon \N\to \mathbb{U}$  is a factor of a system  that
	\begin{enumerate}
		\item
		has no irrational spectrum;
		
		\item  has ergodic components
		isomorphic to   direct products of infinite-step nilsystems and  Bernoulli systems.	
	\end{enumerate}
\end{theorem}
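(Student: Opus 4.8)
The plan is to deduce Theorem~\ref{T:structure} from the known structure theory of measure preserving systems associated to a \emph{single} multiplicative function, combined with the disjointness/Chowla-type inputs needed to control joint systems. First I would handle the single-function case: given a multiplicative function $f\colon\N\to\mathbb{U}$ with finite range and its Furstenberg system $(X,\mu,S)$ on a sequence of intervals $\bN$, one invokes the aperiodic/periodic decomposition of $f$ — by the structure theory of multiplicative functions (in the spirit of Halász and the Matomäki--Radziwiłł--Tao theory), write $f$ as a combination whose ``pretentious'' part is governed by an archimedean character $n\mapsto n^{it}\chi(n)$ and whose complementary part is strongly aperiodic. The strongly aperiodic part, by the methods of \cite{FH18} adapted via the Tao--Teräväinen odd-order Chowla result, gives a Furstenberg system whose ergodic components are Bernoulli (this is where the genuinely hard analytic input lives). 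The pretentious part contributes the nilsystem factors, and crucially contributes \emph{no irrational spectrum}: the point is that the eigenvalues coming from $n^{it}$ and from Dirichlet characters $\chi$ are rational rotations in the limit, because $\sum_{n\le N} n^{it}/n$ and related logarithmic sums have controlled behavior. This is the origin of conclusion~(i).

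**Passing from one function to several.**
For the joint system of $f_1,\dots,f_\ell$, I would realize the joint Furstenberg system on $X := \prod_{j} A_j^{\Z}$ with the diagonal shift, and argue that it embeds as a factor of the product of suitably chosen single Furstenberg systems, possibly after passing to a subsequence of $\bN$ (which is harmless, since the theorem speaks of \emph{a} joint Furstenberg system, i.e.\ it suffices to treat limits along subsequences). The key structural claim is then closure of the relevant class under products and factors: a direct product of systems each of which is a factor of a (no-irrational-spectrum, ergodic-components-$=$-nilsystem-$\times$-Bernoulli) system is again such a factor. Having no irrational spectrum is preserved under products because the group of eigenvalues of a product is generated by the eigenvalue groups of the factors (so if each is contained in the rationals mod $1$, so is the join); and it is preserved under factors trivially. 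That ergodic components of a product are products of ergodic components of the factors is the ergodic decomposition of a product measure, and a product of (nilsystem $\times$ Bernoulli)'s is again of that form since a product of infinite-step nilsystems is an infinite-step nilsystem and a product of Bernoulli systems is Bernoulli.

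**The main obstacle.**
I expect the main obstacle to be not the ``soft'' product/factor closure arguments above, but rather the identification of the Bernoulli component for the strongly aperiodic part \emph{jointly} — i.e.\ showing that after removing the pretentious/nilsystem part, the remaining structure is genuinely a Bernoulli factor and not merely a zero-entropy-complement-plus-something. In \cite{FH18} this rests on a delicate use of the (logarithmically averaged, conditional) Chowla conjecture to kill all off-diagonal correlations of the aperiodic part; here one needs the Tao--Teräväinen result for odd-order correlations, together with the observation that for the structure theorem one only needs enough vanishing of correlations to force the Kolmogorov/Pinsker factor to coincide with the nilfactor. Concretely, the hard step is: show that the Pinsker factor of a joint Furstenberg system is contained in the (pro-nil $\times$ trivial) factor, equivalently that the relative complement is Bernoulli, using that the strongly aperiodic parts are jointly ``Chowla-like'' along $\bN$. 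I would organize the proof so that all the analytic weight is concentrated in one lemma of this type, stated for finitely many strongly aperiodic multiplicative functions and their shifts, and then derive Theorem~\ref{T:structure} formally from that lemma plus the Halász-type decomposition and the elementary closure properties.
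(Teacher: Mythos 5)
Your proposal diverges fundamentally from the paper's argument, and two of its load-bearing steps fail. First, the reduction from several functions to one: the joint Furstenberg system of $f_1,\ldots,f_\ell$ is a \emph{joining} of the individual Furstenberg systems, not a factor of their direct product equipped with the product measure, and the structural properties in question are not preserved under joinings. Indeed the paper points out exactly this trap (footnote in the proof of Theorem~\ref{T:Sarnak2}): joinings of Bernoulli systems can be arbitrary ergodic systems of positive entropy, so no ``closure under products and factors'' argument can recover the joint structure from the single-function structure. Second, your single-function input is far too strong: the claim that the Furstenberg system of a strongly aperiodic multiplicative function has Bernoulli ergodic components is, already for the Liouville function, equivalent to the logarithmically averaged Chowla conjecture and is open (the paper states this explicitly in Section~\ref{SS:conj}). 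Theorem~\ref{T:structure} deliberately asserts only that the Furstenberg system is a \emph{factor of} a system with the stated structure, precisely because the stronger statement is out of reach. The ``Hal\'asz-type decomposition'' of $f$ into a pretentious part plus a strongly aperiodic part, compatible with all joint correlations, is also not something that exists in a usable form.

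The paper's route is entirely different and avoids both obstacles: using the Tao--Ter\"av\"ainen identity (Theorem~\ref{T:Tao1}) together with their vanishing theorem (Theorem~\ref{T:TT}), one finds a single modulus $d$ such that the weights $c_{p,m}=\prod_j g_j(p)$ can be replaced by $1$ along primes $p\in\P_d$ (Corollary~\ref{C:constantone}). The resulting ergodic identity (Theorem~\ref{T:Tao2}) exhibits the joint Furstenberg system $(X,\mu,T)$ as a factor of the system of arithmetic progressions with steps in $\P_d$ built from $(X,\mu,T)$ itself (Proposition~\ref{P:factor}); the inequality $\wt\mu_d\le\phi(d)\,\wt\mu_1$ then transfers the structural theorem of \cite{FH18} for prime steps to this system. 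Note that all the analytic work is concentrated in handling the weights and in the black-box result of \cite{FH18}, not in any Chowla-type statement about the multiplicative functions themselves.
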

\begin{remarks}
	$\bullet$ We refer the reader to
	 Section~2 and
	 Appendix~A of \cite{FH18} for the definition of the ergodic notions used in the previous statement.
	
	 $\bullet$ The product decomposition depends on the ergodic component, in particular,  the infinite-step nilsystems depend on  the  ergodic component.

	\end{remarks}
See Section~\ref{SS:conj} for more refined conjectural statements regarding the structure of joint
Furstenberg systems of  multiplicative functions with values on the unit disc.

\subsection{Proof strategy} Our general strategy in the proofs of Theorems~\ref{T:Sarnak1}-\ref{T:Sarnak2} is similar to the one used in \cite{FH18} to cover the case of the M\"obius and the Liouville  functions, but there are also some serious additional difficulties that we have to overcome.
Our main focus  is to prove the structural result
stated in Theorem~\ref{T:structure}; then
Theorems~\ref{T:Sarnak1}-\ref{T:Sarnak2} are consequences of this result and   the deduction is carried out using  joining arguments  in Section~\ref{S:123}
(Theorem~\ref{T:Sarnak1} also uses additional  number theory input provided by
Theorem~\ref{T:Tao}).
The first step in the proof of Theorem~\ref{T:structure} is to  apply  the identity of Theorem~\ref{T:Tao1}
which
allows to express an arbitrary joint correlation %%of shifts
of multiplicative functions
as a weighted average of their
dilated joint correlations taken over all  prime dilates   (this step necessitates the use of logarithmic averages).
This leads, via the correspondence principle of Furstenberg (see Proposition~\ref{P:correspondence}), to certain ergodic  identities  that any  joint Furstenberg system $(X,\mu,T)$ of these
multiplicative functions satisfies.

 The next goal is to utilize the ergodic  identities in order to prove the structural properties of Theorem~\ref{T:structure}. Unfortunately, the presence of
some unwanted weights, which appear  because the multiplicative functions
are not constant on primes, creates serious problems that do not allow us to continue as in \cite{FH18}, especially when the multiplicative functions take infinitely many distinct values on primes.
The way to overcome this obstacle is to first utilize a recent result of  Tao and  Ter\"av\"ainen, which proves that joint correlations of multiplicative functions vanish if
the product of the multiplicative functions is far from being periodic. This result enables us to obtain a variant
of the identity  in Theorem~\ref{T:Tao1}, which  has the additional  property that all the weights are equal to $1$
(see Corollary~\ref{C:constantone}).
As a consequence, we get  an ergodic identity, stated  in Theorem~\ref{T:Tao2},  which  allows to show that  the system $(X,\mu,T)$   is a factor of some system of arithmetic progressions with steps given by all
 primes  in an appropriate congruence class
 (see Definition~\ref{D:tilde}). Finally, this system
can be easily linked to a system of arithmetic progressions with prime steps (see Lemma~\ref{L:abscont}). The structure of these systems was  studied in \cite{FH18} and they were  shown to satisfy the structural properties of Theorem~\ref{T:structure}. Combining the above facts we get a proof of Theorem~\ref{T:structure}.

A simpler and  more elementary way to
link Furstenberg systems of multiplicative functions to  systems of arithmetic progressions with primes steps is explained in Section~\ref{SS:alternative}; but this simpler approach only
works   if the range of the multiplicative functions on the primes is a subset of the  unit interval or a finite subset of the complex unit disc.

\subsection{Further remarks and conjectures} \label{SS:conj}
The structural result of Theorem~\ref{T:structure} is not expected to be optimal and we give below some more refined
conjectural structural statements.

 In what follows, unless explicitly specified, a Bernoulli system is allowed to be the trivial one point system. Moreover, an ergodic procyclic system (often referred to as an odometer) is an ergodic inverse limit of periodic systems,
	or equivalently, an ergodic system $(X,\mu,T)$ for which   the rational eigenfunctions span a dense subspace of $L^2(\mu)$.
	\begin{conjecture} If the multiplicative functions  $f_1,\ldots, f_\ell$ take values in $[-1,1]$ or in a finite subset of $\mathbb{U}$, then  they have  a unique joint Furstenberg system,\footnote{This is equivalent to the statement that all sequences of the form   $(\prod_{j=1}^m g_j(n+h_j))_{n\in\N}$ have logarithmic averages,  where $g_1,\ldots, g_m \in \{f_1,\ldots, f_\ell, \overline{f_1}, \ldots, \overline{f_\ell}\}$ and $m, h_1,\ldots, h_m\in \N$ are arbitrary.}  which   is ergodic and isomorphic  to
	the direct product of a procyclic system
		and  a Bernoulli system.
  \end{conjecture}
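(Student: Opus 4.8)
The aim is to compute \emph{all} the logarithmic correlations $\lE_n\, \prod_{j} g_j(n+h_j)$ with $g_j\in\{f_1,\dots,f_\ell,\overline{f_1},\dots,\overline{f_\ell}\}$ and to show that they coincide with the correlations of an explicit system $Z\times B$, where $Z$ is a procyclic system built from the ``pretentious part'' of the $f_j$ and $B$ is a Bernoulli system; this single assertion yields existence, uniqueness, ergodicity, and the stated product structure simultaneously. The guiding philosophy is the familiar dichotomy that a bounded multiplicative function is either pretentious to a twisted character --- and then, for the ranges we allow, essentially periodic --- or else ``random'': the Furstenberg system should see structures of the first kind as the procyclic factor $Z$ and those of the second kind as the Bernoulli factor $B$.

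For the structured part I would start from the weight-free identity (Theorem~\ref{T:Tao1} and Corollary~\ref{C:constantone}) and the Tao--Ter\"av\"ainen vanishing theorem (Theorem~\ref{T:Tao}) to discard every correlation whose relevant product of functions is aperiodic, so that only correlations attached to the pretentious approximations of the $f_j$ survive. A Hal\'asz-type estimate on primes (using equidistribution of $t\log p$, and for $[-1,1]$-valued $f_j$ also their reality) shows that when the ranges are finite subsets of $\mathbb{U}$ or subsets of $[-1,1]$ no Archimedean character $n^{it}$ with $t\neq 0$ can occur in such an approximation, so the surviving pieces are governed by genuine Dirichlet characters, i.e.\ by honestly periodic sequences; these assemble into a procyclic system $Z$. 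By Theorem~\ref{T:structure} the Furstenberg system has no irrational spectrum, hence once ergodicity is established its Kronecker factor is exactly $Z$; to see that $Z$ is also the \emph{maximal nilfactor} one argues that the $f_j$ have no correlation with any nilsequence of positive step and zero integral, via the inverse theorem for the Gowers norms together with the pretentiousness estimates --- informally, a multiplicative function is ``step-one pretentious'' and cannot imitate a genuine higher-step nilsequence.

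It then remains to identify the complement of $Z$ with a Bernoulli system and to obtain the splitting $X\cong Z\times B$. This amounts to showing that, after removing the periodic structure, the values $(f_1(n),\dots,f_\ell(n))$ behave, along logarithmic averages and conditionally on $Z$, like an i.i.d.\ process; equivalently, that all ``aperiodic correlations'' either vanish or factor through $Z$. That is precisely the logarithmically averaged Chowla/Elliott conjecture for the aperiodic components, and it would upgrade the ``factor of a Bernoulli system'' obtained in \cite{FH18} to an honest Bernoulli complement, after which the product decomposition follows from the weak mixing of $B$ relative to $Z$.

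The decisive obstacle is this last input: the full logarithmically averaged Chowla/Elliott conjecture for joint correlations of all orders is currently known only in the two-point case (Tao) and the odd-order cases (Theorem~\ref{T:Tao}), which is exactly why the statement above remains a conjecture. Secondary difficulties --- establishing ergodicity, making the pretentious decomposition uniform in the shifts $h_j$, and controlling the interaction between the structured and random parts --- appear tractable by the methods of \cite{FH18} and of the present paper once the Elliott-type correlation input is available.
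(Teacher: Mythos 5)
The statement you are addressing is stated in the paper as a \emph{conjecture}; the paper offers no proof of it, and indeed records that already in the simplest case ($f$ the Liouville function) the Bernoulli alternative is equivalent to the logarithmically averaged Chowla conjecture, which is open. Your proposal is therefore a roadmap rather than a proof, and to your credit you identify the decisive missing input yourself: identifying the ``random'' part with an honest Bernoulli complement, and splitting it off as a direct product, is exactly the logarithmically averaged Elliott conjecture for joint correlations of all orders, known only in the two-point case and in a weak form for odd orders. Until that input is available, nothing in your argument yields uniqueness of the joint Furstenberg system, its ergodicity, or the asserted product decomposition; what can actually be proved unconditionally is only the much weaker Theorem~\ref{T:structure} (a factor of a system with no irrational spectrum whose ergodic components are products of infinite-step nilsystems and Bernoulli systems).

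Two further soft spots in the intermediate steps deserve mention. First, your attribution is off: Theorem~\ref{T:Tao} is Tao's two-point correlation result, not the odd-order cases; the relevant vanishing theorem is Theorem~\ref{T:TT} of Tao--Ter\"av\"ainen, and it requires the \emph{product} $g_1\cdots g_m$ to be far from every Dirichlet character. It therefore does not ``discard every correlation whose relevant product of functions is aperiodic'' in the sense you need --- many nonvanishing correlations survive this sieve (all those indexed by the subgroup $K$ in the proof of Proposition~\ref{P:constantone}), and it is precisely these survivors whose structure is unknown. Second, your argument that the procyclic factor is the maximal nilfactor rests on showing that aperiodic bounded multiplicative functions do not correlate with higher-step nilsequences of zero mean; this Gowers-uniformity statement is known for the M\"obius function by Green--Tao but is itself an open problem for general bounded multiplicative functions, and does not follow from ``step-one pretentiousness'' heuristics. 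So the proposal contains at least two genuinely open inputs, and the statement must remain a conjecture.
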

This  generalizes \cite[Conjecture~1]{FH18}, which concerned Furstenberg systems  of a single multiplicative function $f\colon \N\to [-1,1]$.
 If we further  restrict to the case where
 $f$ takes values in $\{-1,1\}$, then
 we conjectured in  \cite[Conjecture~2]{FH18} that  $f$
 should have a unique Furstenberg system, which   is either an ergodic procyclic system  or a Bernoulli system. Combining  \cite[Theorem~1.7]{BKLR17}  and \cite[Theorem~6]{DD82} we get that  the first alternative holds  if $f$ is not aperiodic (this happens if and only if  $\D(f,\chi)<\infty$ for some Dirichlet character $\chi$, see terminology in Section~\ref{SS:aperiodic}).
We expect that the second alternative holds exactly   when   $f$ is aperiodic.
This is known to be the case conditionally to the
assumption that all Furstenberg systems of $f$ are ergodic \cite[Corollary~1.5]{Fr17}. Unconditionally, this is not even known
 for the Liouville function; it is equivalent to the logarithmically averaged variant of the Chowla conjecture.

Perhaps surprisingly,  multiplicative functions with values on the unit circle may have non-ergodic Furstenberg systems, in fact, with uncountably many ergodic components. Consider  for instance the multiplicative function  $f(n):=n^{it}$, $n\in \N$,  for some non-zero $t\in \R$. We claim that  it has a unique Furstenberg system $(X,\mu,T)$, which is isomorphic to the system $(\T,m_\T,R)$, where $R$ is the identity transformation on $\T$. Indeed, let $G_0(y):=e^{2\pi i y}$, $y\in \T$, and $X:=\mathbb{U}^\Z$, $T$ be the shift transformation on $X$, $F_0(x):=x(0)$, $x\in X$,  and $\mu:=\lim_{N\to\infty}\frac 1{\log N}\sum_{n=1}^N\frac{ \delta_{f(n)}}{n}$  (we show below that the weak-star limit exists).
We claim that $\Phi\colon \T\to X$,  defined by $\Phi(y):=(e^{2\pi i y})_{n\in\Z}$, $y\in \T$,  is an isomorphism
between the systems $(\T,m_\T,R)$ and  $(X,\mu,T)$. The map $\Phi$ is clearly one to one and satisfies
$T\circ \Phi=\Phi\circ R$. It remains to show that  $\mu=m_\T\circ\Phi^{-1}$.
Notice first, that  due to the slowly varying nature of $n^{it}$, for fixed $h\in \Z$ we have  $(n+h)^{it}-n^{it}\to 0$ as $n\to\infty$. Using this and the fact that $\lim_{N\to\infty}\frac 1{\log N}\sum_{n=1}^N\frac{n^{it}}{n}=0$ for $t\neq 0$, we get that for every $m\in \N$ and $k_{-m},\ldots, k_m\in\Z$, we have
$$
\int_X\prod_{j={-m}}^m T^{h_j}F_0^{k_j}\, d\mu=\lim_{N\to\infty}\frac 1{\log N}\sum_{n=1}^N\frac{\prod_{j={-m}}^m f^{k_j}(n+h_j)}{n}=\int_{\T} \prod_{j={-m}}^m T^{h_j}G_0^{k_j}\, dm_\T,
$$
since the second and third terms  are  either simultaneously $0$ or $1$ depending on whether   $\sum_{j=-m}^mk_j\neq 0$ or $\sum_{j=-m}^mk_j= 0$.  Using this identity and the fact that $G_0=F_0\circ \Phi$ we
get that  $\mu=m_\T\circ\Phi^{-1}$, completing the proof that $\Phi$ is an isomorphism.

Similarly, if $f(n):=n^{it}\bmu(n)$, $n\in\N$, where $t\neq 0$ and $\bmu$ is the M\"obius function, then  we expect (but cannot prove) that $f$
has a unique Furstenberg system with uncountably many ergodic components, all of them isomorphic to a direct
product of a non-trivial procyclic system and a non-trivial Bernoulli system. It seems  likely that a similar structural result holds for general multiplicative functions with values on the unit disc:
\begin{conjecture}
A joint Furstenberg system of any multiplicative functions $f_1,\ldots, f_\ell\colon \N\to \mathbb{U}$
 has ergodic components  isomorphic to direct products of procyclic systems and Bernoulli systems.
\end{conjecture}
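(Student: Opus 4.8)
The plan is to upgrade Theorem~\ref{T:structure} by two reductions followed by one substantive step. By that theorem the joint Furstenberg system $(X,\mu,T)$ is a factor of a system $S$ that has no irrational spectrum and whose ergodic components are direct products $Z_\omega\times B_\omega$ of an infinite-step nilsystem $Z_\omega$ and a Bernoulli system $B_\omega$. First I would prove a descent lemma: the class of systems whose ergodic components are isomorphic to a direct product of a procyclic system and a Bernoulli system is closed under passing to factors. This rests on Ornstein theory (a factor of a Bernoulli system is Bernoulli, and the orthogonal complement of the Kronecker factor splits off as a Bernoulli system once the Pinsker and Kronecker factors agree), on the triviality that factors of procyclic systems are procyclic, and on the measurability of the ergodic decomposition under a factor map. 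Granting this, it suffices to establish the conjecture for $S$ in place of $(X,\mu,T)$, that is, to show that almost every $Z_\omega$ is procyclic.

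Second I would isolate a purely dynamical statement: an ergodic infinite-step nilsystem with no irrational spectrum is procyclic. For a finite-step factor, ergodicity forces its maximal equicontinuous factor to be a monothetic rotation; if the underlying group had a nontrivial connected component, the generator would project onto an irrational rotation of a circle quotient, producing irrational spectrum. Hence no irrational spectrum forces this factor to be totally disconnected, and nilpotent structure theory then collapses each finite-step nilsystem to a rotation on a totally disconnected monothetic group, that is, to a periodic system; taking the inverse limit, $Z_\omega$ is procyclic. This reduces everything to the statement that almost every ergodic component of $S$ has no irrational spectrum. Crucially, this does \emph{not} follow formally from the global no-irrational-spectrum property: a non-ergodic system can have no global irrational eigenfunction while each component carries an irrational eigenvalue, provided the eigenvalue varies with the component --- and the computation for $f(n)=n^{it}$ already shows that the continuous, Archimedean part of a multiplicative function's spectrum is genuinely spread over uncountably many ergodic components.

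The substantive, and hardest, step is therefore to show that every irrational eigenvalue occurring in the ergodic decomposition of $S$ is absorbed into the base of that decomposition and acts there as the identity rather than as a genuine rotation. The strategy is as follows: an irrational eigenvalue that were constant on a set of components of positive measure would assemble into a global irrational eigenfunction, contradicting Theorem~\ref{T:structure}; hence any surviving irrational eigenvalue must vary measurably with the component. For multiplicative functions, almost-periodic behaviour at an irrational frequency can only arise from pretentiousness, i.e.\ from $f$ pretending to be $\chi(n)\,n^{it}$, and exactly as in the $f(n)=n^{it}$ computation the slowly varying factor satisfies $(n+h)^{it}-n^{it}\to 0$, forcing the shift to act as the identity and contributing the eigenvalue $1$ rather than an irrational rotation. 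Making this rigorous requires spectral (that is, $U^2$-level) control of multiplicative functions in the spirit of Daboussi--Delange and Matomäki--Radziwiłł--Tao, but applied uniformly over the (possibly uncountable) base of the ergodic decomposition and along the sequence of intervals defining the Furstenberg system. This uniform first-order orthogonality to irrational rotations, in the generality of arbitrary multiplicative functions with values on the disc, is precisely where the argument meets unresolved cases of the Elliott and Chowla conjectures: the reductions of the first two paragraphs are unconditional, but this last step --- and hence the conjecture --- appears to lie beyond current methods.
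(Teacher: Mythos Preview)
The statement you address is presented in the paper as an open \emph{conjecture} (Section~\ref{SS:conj}), not as a theorem; the paper offers no proof and explicitly flags even the special case $f(n)=n^{it}\bmu(n)$ as unresolved. Your proposal is therefore not a proof attempt to be compared against the paper's argument but a strategic analysis of the obstruction, and as such it is accurate.

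Your two reductions are sound. The descent lemma is essentially a theorem of Thouvenot: the class of ergodic systems of the form (compact group rotation)$\,\times\,$(Bernoulli) is closed under factors, and combined with the routine fact that ergodic components of a factor are factors of ergodic components, this gives what you need. Your second reduction is also correct: an ergodic translation on a connected nilmanifold has a nontrivial torus as its Kronecker factor and hence carries irrational eigenvalues, so an ergodic finite-step nilsystem with only rational spectrum must have finite underlying space; taking the inverse limit yields a procyclic system.

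You have correctly isolated the genuine obstruction. Theorem~\ref{T:structure} asserts absence of irrational spectrum only \emph{globally}, and this does not descend to ergodic components in general --- a direct integral of circle rotations with continuously varying angle already has no global nontrivial eigenfunction while every component has irrational spectrum. Your diagnosis that closing this gap would require spectral ($U^2$-level) control of correlations of multiplicative functions uniformly over an uncountable base, which is tantamount to open cases of the logarithmic Elliott conjecture, is consistent with the paper's own discussion. Your conclusion that the conjecture lies beyond current methods is exactly the paper's position.
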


\subsection{Notation} For readers convenience, we gather here   some notation that we use frequently
 throughout the article.

We write $\T$ for the unit circle,  which we often identify with $\R/\Z$, and we write  $\mathbb{U}$ for the complex unit disc.
We denote by $\N$ the set of positive integers,  by $\P$ the set of prime numbers, and
for  $d\in \N$ we denote by  $\P_d$ the set $\P\cap(d\N+1)$.
For $N\in \N$ we denote by $[N]$ the set
 $\{1,\ldots,
N\}$. Whenever we write $\bN$ we mean a sequence of intervals of integers $([N_k])_{k\in\N}$ with $N_k\to\infty$.

\subsection{Acknowledgement} We would like to thank    M.~Lema\'nczyk  for the observation that the  convergence in Theorem~\ref{T:Sarnak1} is uniform.
We also thank M.~Lema\'nczyk and T. de la Rue for pointing out a correction in Theorem 1.4 and Part~(\ref{it:disjoint-2})  of  Proposition 5.1.

\section{Background in ergodic theory and number theory}

\subsection{Notation regarding averages}\label{SS:def}
 If $A$ is a non-empty finite subset of $\N$  we let
$$
\E_{n\in A}\,a(n):=\frac{1}{|A|}\sum_{n\in A}\, a(n), \quad
\lE_{n\in A}\,a(n):=\frac{1}{\sum_{n\in A}\frac{1}{n}}\sum_{n\in A}\frac{a(n)}{n}.
$$
If $A$ is an infinite subset of $\N$ we let
$$
\E_{n\in A}\, a(n):=\lim_{N\to\infty} \E_{n\in A\cap [N]}\, a(n), \quad
\lE_{n\in A}\, a(n):=\lim_{N\to\infty} \lE_{n\in A\cap [N]}\, a(n)
$$
if  the limits exist.
Let $\bN= ([N_k])_{k\in\N}$ be a sequence of intervals with $N_k\to \infty$.
We let
$$
\E_{n\in\bN}\, a(n):=\lim_{k\to\infty}\E_{n\in[N_k]} \, a(n), \quad
\lE_{n\in\bN}\, a(n):=\lim_{k\to\infty}\lE_{n\in[N_k]} \, a(n)
$$
if  the limits exist. Using partial summation one sees that if
$\E_{n\in \N}\,a(n)=0$, then also $\lE_{n\in \N}\,a(n)=0$. The converse does not hold in general, and the forward implication fails if one replaces $\N$ with general sequences
$\bN= ([N_k])_{k\in\N}$  with $N_k\to \infty$.

\subsection{Measure preserving systems}\label{SS:mps}
Throughout the article, we make the standard assumption that all probability  spaces $(X,\CX,\mu)$ considered are Lebesgue, meaning, $X$  can be given the structure of a compact metric space
and $\CX$ is its Borel $\sigma$-algebra.
A {\em measure preserving system}, or simply {\em a system}, is a quadruple $(X,\CX,\mu,T)$
where $(X,\CX,\mu)$ is a probability space and $T\colon X\to X$ is an invertible, measurable,  measure preserving transformation.
We typically omit the $\sigma$-algebra $\CX$  and write $(X,\mu,T)$. Throughout,  for $n\in \N$ we denote  by $T^n$   the composition $T\circ  \cdots \circ T$ ($n$ times) and let $T^{-n}:=(T^n)^{-1}$ and $T^0:=\id_X$. Also, for $f\in L^1(\mu)$ and $n\in\Z$ we denote by  $T^nf$ the function $f\circ T^n$.

In order to avoid unnecessary repetition,  we refer the reader to the article \cite{FH18} for some other standard notions from ergodic theory.
 In particular, the reader will find in Section~2 and
in  Appendix~A of \cite{FH18} the definition
of the terms factor, Kronecker factor,  isomorphism, inverse limit, spectrum, rational and irrational spectrum,  ergodicity, ergodic components, total ergodicity, nilsystem, infinite-step nilsystem, Bernoulli system, joining, and disjoint systems; all these notions  are used subsequently.

\subsection{Furstenberg systems associated with  several sequences}\label{SS:Furst}
To each finite collection of sequences $a_1,\ldots, a_\ell\colon \N\to \mathbb{U}$ that are distributed ``regularly'' along a
 sequence of intervals,
we  associate a measure preserving system defined using the joint distribution  of the sequences $a_1,\ldots, a_\ell$. For the purposes of this  article,
all averages in the definition of joint Furstenberg systems are taken to be logarithmic.
\begin{definition}\label{D:correlations}
	Let $ \bN:=([N_k])_{k\in\N}$ be a sequence of intervals with $N_k\to \infty$.
	We say that the sequences $a_1,\ldots, a_\ell\colon \Z \to \mathbb{U}$  {\em admit log-correlations  on $\bN$}, if the   limits
	\begin{equation}\label{E:wlim}
	\lim_{k\to\infty} \lE_{n\in [N_k]}\,  \prod_{j=1}^m b_j(n+h_j)
	\end{equation}
	exist for all $m \in \N$, all   $ h_1,\ldots, h_m\in \Z$,  %%(not necessarily distinct),
	and all $b_1,\ldots, b_m\in \{a_1,\ldots, a_\ell, \overline{a_1}, \ldots, \overline{a_\ell}\}$.
\end{definition}
\begin{remarks}
	$\bullet$	Given $a_1,\ldots, a_\ell\colon \Z \to \mathbb{U}$,  using a diagonal argument, we get that every sequence of intervals $\bN=([N_k])_{k\in \N}$
	has a subsequence $\bN'=([N_k'])_{k\in\N}$, such that the sequences  $a_1,\ldots, a_\ell$  admit log-correlations on $\bN'$.

	$\bullet$ If the sequences $a_1, \ldots, a_\ell$ are only defined on $\N$, then  we extend them in an arbitrary way to $\Z$ and give analogous definitions. Then all the limits in \eqref{E:wlim} do not depend on the choice of the extension.
\end{remarks}

\begin{definition}
	Let $(X,T)$ be a topological dynamical system. We say that the collection of
 functions $F_1,\ldots, F_\ell\in C(X)$ is \emph{$T$-generating} if the functions $T^nF_1,\ldots, T^nF_\ell$, $n\in\Z$, separate points  of $X$.
\end{definition}
\begin{remark}
By  the Stone-Weierstrass theorem, the functions $F_1,\ldots, F_\ell\in C(X)$ are $T$-generating  if and only if the $T$, $T^{-1}$-invariant subalgebra  generated by $F_1,\ldots, F_\ell$ and $\overline{F_1},\ldots,\overline{F_\ell} $
is dense in $C(X)$ with the uniform topology.
\end{remark}
We use the following  variant of the correspondence principle of Furstenberg~\cite{Fu77, Fu}
that applies to finite  collections of  bounded sequences of complex numbers:

\begin{proposition}\label{P:correspondence}
	Let $a_1,\ldots, a_\ell\colon \Z\to \mathbb{U}$ be sequences that  admit
	log-correlations  on
	$\bN:=([N_k])_{k\in\N}$.
	Then there exist a  topological dynamical  system $(X,T)$,  a    $T$-invariant Borel probability measure $\mu$, and a  $T$-generating collection of functions $F_{0,1}\ldots, F_{0,\ell}\in C(X)$,
	such that
	\begin{equation}\label{E:correspondence}
	\lE_{n\in {\bN} }\,  \prod_{j=1}^m b_j(n+h_j) =\int_X \prod_{j=1}^m
	T^{h_j}F_j \, d\mu
	\end{equation}
	for all  $m\in \N$, all  $h_1, \ldots, h_m\in \Z$, and all
	$b_j,\ldots, b_m\in\{a_1,\ldots, a_\ell, \overline{a_1},\ldots, \overline{a_\ell}\}$,
	where for $j=1,\ldots, m,$ if  the sequence
	$b_j$ is   $a_k$ or $\overline{a_k}$ for some $k\in \{1,\ldots, \ell\}$,
	then  $F_j$ is  $F_{0,k}$ or $\overline{F_{0,k}}$ respectively.
\end{proposition}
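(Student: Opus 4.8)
\textbf{Proof proposal for Proposition~\ref{P:correspondence}.}

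The plan is to realize the system concretely on a sequence space, following the standard Furstenberg construction but keeping track of the several sequences simultaneously. First I would set $X:=\mathbb{U}^{\ell\times\Z}$, with points written as $x=(x_k(n))_{1\le k\le \ell,\, n\in\Z}$, and equip it with the product topology; since $\mathbb{U}$ is compact metric, $X$ is compact metric. Let $T\colon X\to X$ be the shift $(Tx)_k(n):=x_k(n+1)$, which is a homeomorphism, and define the coordinate functions $F_{0,k}(x):=x_k(0)$ for $k=1,\dots,\ell$; these are continuous, and since $T^nF_{0,k}(x)=x_k(n)$ recovers every coordinate, the collection $F_{0,1},\dots,F_{0,\ell}$ is $T$-generating. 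Next I would encode the tuple of sequences as a single point: let $\xi\in X$ be given by $\xi_k(n):=a_k(n)$, so that $T^n\xi$ has $k$-th zeroth coordinate $a_k(n)$.

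The measure is produced as a weak-star limit of the logarithmically averaged empirical measures. Concretely, set $\mu_k:=\lE_{n\in[N_k]}\,\delta_{T^n\xi}$, a Borel probability measure on the compact metric space $X$. By compactness of the space of probability measures in the weak-star topology (and metrizability, so that sequential compactness applies), some subsequence of $(\mu_k)$ converges weak-star; but in fact the hypothesis that $a_1,\dots,a_\ell$ admit log-correlations on $\bN$ forces the whole sequence to converge. Indeed, the linear span of the functions $\prod_{j=1}^m T^{h_j}G_j$, where each $G_j\in\{F_{0,1},\dots,F_{0,\ell},\overline{F_{0,1}},\dots,\overline{F_{0,\ell}}\}$ and $h_1,\dots,h_m\in\Z$, $m\in\N$, is a conjugation-closed unital subalgebra of $C(X)$ that separates points, hence is uniformly dense by Stone--Weierstrass; and for such a generating monomial,
$$
\int_X \prod_{j=1}^m T^{h_j}G_j\, d\mu_k=\lE_{n\in[N_k]}\,\prod_{j=1}^m b_j(n+h_j),
$$
with $b_j=a_\kappa$ or $\overline{a_\kappa}$ according as $G_j=F_{0,\kappa}$ or $\overline{F_{0,\kappa}}$, and this converges as $k\to\infty$ by Definition~\ref{D:correlations}. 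Since $\int F\,d\mu_k$ converges for $F$ in a uniformly dense subspace of $C(X)$ and $\|\mu_k\|\le 1$, a standard $3\varepsilon$-argument shows $\int F\,d\mu_k$ converges for every $F\in C(X)$; call the limit functional $\Lambda(F)$. By the Riesz representation theorem $\Lambda$ is integration against a Borel probability measure $\mu$, and passing to the limit in the displayed identity gives exactly \eqref{E:correspondence} for all generating monomials, hence the required formula for all choices of $m$, $h_j$ and $b_j$.

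It remains to check that $\mu$ is $T$-invariant. This follows because $\mu_k$ is \emph{asymptotically} shift-invariant: for $F\in C(X)$,
$$
\int_X F\circ T\, d\mu_k-\int_X F\, d\mu_k=\lE_{n\in[N_k]}\,\bigl(F(T^{n+1}\xi)-F(T^n\xi)\bigr),
$$
and a short computation with the logarithmic weights $1/n$ (telescoping, and using that the weights vary slowly, i.e. $\sum_{n\le N}|1/(n+1)-1/n|=O(1)$ while $\sum_{n\le N}1/n\to\infty$) shows this difference tends to $0$ as $k\to\infty$; hence $\int F\circ T\,d\mu=\int F\,d\mu$ for all $F\in C(X)$, so $T_*\mu=\mu$. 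The main obstacle in writing this up cleanly is precisely this last point: with logarithmic (rather than Cesàro) averages the empirical measures are not exactly shift-invariant, so one must verify the telescoping estimate carefully to conclude invariance of the limit. Once that is in hand, $(X,\mu,T)$ together with $F_{0,1},\dots,F_{0,\ell}$ is the desired system, and the identity \eqref{E:correspondence} holds by construction.
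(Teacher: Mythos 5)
Your proposal is correct and follows essentially the same route as the paper's proof: the same sequence space $X=(\mathbb{U}^\ell)^\Z$ with the shift, the same coordinate functions $F_{0,k}(x)=x_k(0)$, and the same measure obtained as the weak-star limit of the logarithmically averaged empirical measures $\lE_{n\in[N_k]}\delta_{T^n\xi}$. The only difference is that you spell out details the paper leaves implicit, namely the Stone--Weierstrass argument showing that the log-correlation hypothesis forces convergence of the whole sequence of measures, and the telescoping estimate showing that the limit is $T$-invariant despite the logarithmic weights; both of these verifications are correct.
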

\begin{remark}
	In the arguments that follow we often use the explicit choice
	of $X$ and $T$ made  in the proof below, namely, we take $X=(\mathbb{U}^\ell)^\Z$ and let $T$ be the shift transformation on $X$. We also often assume that the functions  $F_{0,1}, \ldots, F_{0,\ell}$ are defined by \eqref{E:F0} below.
	\end{remark}
\begin{proof}
  Let $X:=(\mathbb{U}^\ell)^\Z$  and $T$ be the shift transformation on
	$X$, namely, $T$ maps an element $((x_1(n),\ldots, x_\ell(n)))_{n\in\Z}$ of $X$ to
	 $((x_1(n+1),\ldots, x_\ell(n+1)))_{n\in\Z}$.
	For $j=0,\ldots, \ell$ we define the functions $F_{0,j}\in C(X)$ as follows
	\begin{equation}\label{E:F0}
	F_{0,j}(x):=x_j(0), \quad \text{for } x=((x_1(n),\ldots, x_\ell(n)))_{n\in\Z}\in X.
	\end{equation}
	 Finally, the measure $\mu$  is defined to be the weak-star limit of the sequence of measures
	$\lE_{n\in[N_k]}\delta_{T^na}$, $k\in \N$,
	where $a:=((a_1(n),\ldots, a_\ell(n)))_{n\in\Z}\in X$. Then $\mu$ is $T$-invariant and  we have $F_{0,j}(T^na)=a_j(n)$, $n\in \Z$, for $j=1,\ldots, \ell$. It follows that
	\eqref{E:correspondence} holds and the proof is complete.
	\end{proof}

\begin{definition}
	Let $a_1,\ldots, a_\ell\colon \Z\to \mathbb{U}$ be  sequences that  admit
	log-correlations on
	$\bN:=(N_k)_{k\in\N}$.
	We call the system  (or the measure $\mu$)
	defined in Proposition~\ref{P:correspondence}
	the {\em joint Furstenberg system  (or measure)
		associated with $a_1,\ldots, a_\ell$ and $\bN$}.
\end{definition}
\begin{remarks}
	$\bullet$  Given $a_1,\ldots,a_\ell\colon \Z\to \mathbb{U}$ and $\bN$,  the measure  $\mu$ is uniquely determined by \eqref{E:correspondence} since this identity determines the values of $\int f\, d\mu$ for all real valued  $f\in C(X)$.
	
	$\bullet$
		If two or more sequences coincide, say for example that  $a_m=\cdots=a_\ell$ for some $m\in\{1,\ldots, \ell-1\}$, then
		it is not hard to see that the joint  Furstenberg system associated with  $a_1,\ldots, a_\ell$ and  $\bN$ is isomorphic with  the one associated with
		 $a_1,\ldots, a_m$ and $\bN$.

	$\bullet$ 	A collection of sequences $a_1,\ldots, a_\ell\colon \Z\to \mathbb{U}$ may have several non-isomorphic joint Furstenberg systems
	depending on which sequence of intervals $\bN$ we use in the evaluation of  their joint correlations.
	When we write that  a joint Furstenberg measure or system of the sequences  $a_1,\ldots, a_\ell$  has a certain property, we mean that any of these measures or systems has the asserted property.
\end{remarks}

\subsection{Convergence results} Henceforth, we use the following notation:
\begin{definition}
	If $d\in \N$ we let $\P_d:=\P\cap(d\N+1)$.
\end{definition}
We will use
 the following convergence result that
  was proved in  \cite{WZ11} and also in \cite{FHK}
conditional to some conjectures obtained later in \cite{GT7, GTZ12}:
\begin{theorem}%%[\cite{HK}]
	\label{T:FHK}
	Let $(X,\mu,T)$ be a system and $d\in \N$. Then
	for every $\ell\in\N$ and  $F_1,\ldots ,F_\ell\in L^\infty(\mu)$ the following limit exists in $L^2(\mu)$
	$$
	\E_{p\in\P_d}\prod_{j=1}^\ell T^{pj}F_j.
	$$
\end{theorem}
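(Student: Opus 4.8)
\textbf{Proof proposal for Theorem~\ref{T:FHK}.}

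The plan is to reduce the existence of the limit $\E_{p\in\P_d}\prod_{j=1}^\ell T^{pj}F_j$ in $L^2(\mu)$ to two known ingredients: (i) the corresponding result for Cesàro averages over $\N$, namely the existence of $\E_{n\in\N}\prod_{j=1}^\ell T^{nj}F_j$ in $L^2(\mu)$ (the Host–Kra–Ziegler convergence theorem for linear multiple ergodic averages), and (ii) a quantitative equidistribution statement for primes in nilsystems, which is exactly what is supplied by the inverse Gowers-norm results of \cite{GT7,GTZ12} together with the structure theory of \cite{WZ11,FHK}. First I would dispose of the case $d=1$ and then note that replacing $\P$ by $\P_d=\P\cap(d\N+1)$ changes nothing essential, since primes in the fixed residue class $1\bmod d$ are handled by the same machinery (one works with the $W$-tricked von Mangoldt function restricted to that progression, or equivalently observes that the relevant correlation sequences are nilsequences and primes in arithmetic progressions equidistribute against nilsequences).

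The key steps, in order, would be the following. First, introduce the modified von Mangoldt weight: by partial summation and the prime number theorem in progressions, the average $\E_{p\in\P_d\cap[N]}\prod_j T^{pj}F_j$ differs negligibly from a weighted average $\frac{1}{N}\sum_{n\le N}\Lambda_d'(n)\prod_j T^{nj}F_j$, where $\Lambda_d'$ is a suitably normalised restriction of the von Mangoldt function to the residue class $1\bmod d$ (absorbing the $\log$ and the density $1/\varphi(d)$). Second, invoke the decomposition $\Lambda_d'=\Lambda^\sharp+\Lambda^\flat$ into a "structured'' part $\Lambda^\sharp$ that is a nonnegative, bounded-on-average approximant behaving like the constant function $1$, and a "uniform'' part $\Lambda^\flat$ with small Gowers $U^{s+1}[N]$ norm for every $s$. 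Third, for the structured part, compare $\frac1N\sum_{n\le N}\Lambda^\sharp(n)\prod_j T^{nj}F_j$ with $\frac1N\sum_{n\le N}\prod_j T^{nj}F_j$; their difference tends to $0$ in $L^2(\mu)$ because $\Lambda^\sharp-1$ averages to $0$ against the bounded multiple-ergodic-average sequence $n\mapsto \prod_j T^{nj}F_j$ — here one uses that this sequence is (after projecting to the relevant Host–Kra factor) an averaged nilsequence, and equidistribution of $n$ against it. Fourth, for the uniform part, apply the generalised von Neumann theorem for the $U^{s+1}$-control of the averages $\frac1N\sum_{n\le N}c(n)\prod_j T^{nj}F_j$ with $s=s(\ell)$ the relevant Host–Kra complexity: since $\|\Lambda^\flat\|_{U^{s+1}[N]}\to 0$, this contribution vanishes in $L^2(\mu)$. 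Combining, $\E_{p\in\P_d\cap[N]}\prod_j T^{pj}F_j$ and $\E_{n\in[N]}\prod_j T^{nj}F_j$ have the same limit as $N\to\infty$, and the latter exists by \cite{HK05}-type results; this yields the claim.

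I expect the main obstacle to be making the third and fourth steps uniform enough that the comparison is genuinely in $L^2(\mu)$ and not merely weakly or after passing to a subsequence: one must control the multiple ergodic average sequence $n\mapsto\prod_j T^{nj}F_j$ not just pointwise but as an $L^2(\mu)$-valued object, so the Gowers-norm estimates have to be applied with the $F_j$ replaced by their Host–Kra conditional expectations onto a characteristic factor, and the error terms quantified in $L^2(\mu)$ uniformly in the "frequency'' data of the nilsequence. This is precisely the technical heart of \cite{WZ11,FHK}; in our setting we may simply cite those references, since all we need is the bare existence of the limit, with no rate. One should also remark that the construction is harmless for us because $\bN$ here is the full sequence $([N])_{N\in\N}$ — the theorem as stated is about the genuine limit over $\N$, so no diagonal extraction over sequences of intervals $\bN$ is needed.
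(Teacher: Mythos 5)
Your reduction contains a step that is genuinely false. You conclude that $\E_{p\in\P_d\cap[N]}\prod_j T^{pj}F_j$ and $\E_{n\in[N]}\prod_j T^{nj}F_j$ have the same limit, via the claim in your third step that $\Lambda^\sharp-1$ averages to zero against the sequence $n\mapsto\prod_j T^{nj}F_j$. This fails already for $\ell=1$ on a finite rotation: take $X=\Z/3\Z$ with the shift and $F_1=\mathbf{1}_{\{0\}}$. Then $\E_{n\in[N]}T^nF_1\to 1/3$ (the constant), while $\E_{p\in\P\cap[N]}T^pF_1\to \tfrac12\bigl(T F_1+T^2F_1\bigr)$, since primes equidistribute only among the residues coprime to $3$; the two limits are different. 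The point is that the structured part $\Lambda^\sharp$ of the von Mangoldt function is \emph{not} close to the constant $1$ when tested against periodic sequences --- it necessarily carries the local obstructions modulo small primes, which is exactly why the $W$-trick is needed and why the limit along primes sees the rational Kronecker factor differently from the Ces\`aro limit. The existence of the limit is still true, but it is not obtained by identifying it with $\lim_N\E_{n\in[N]}\prod_j T^{nj}F_j$; in \cite{WZ11,FHK} one instead proves convergence of the $W$-tricked weighted averages along each progression $W\N+b$ with $(b,W)=1$ and then takes a further limit in $W$, and this outer limit requires its own argument.

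You should also be aware that the paper does not re-run any of this machinery for general $d$: it cites \cite{WZ11,FHK} for the case $d=1$ as a black box and deduces the case of $\P_d=\P\cap(d\N+1)$ by a soft product-system trick, applying the $d=1$ statement to $T\times R$ on $X\times\Z/d\Z$ (with $R$ the shift on $\Z/d\Z$) and the functions $F_1\otimes\mathbf{1}_{d\Z+1},F_2,\ldots,F_\ell$, using only that the relative density of $\P_d$ in $\P$ exists and is positive. Your proposal to ``work with the $W$-tricked von Mangoldt function restricted to that progression'' would instead require reopening the proofs of the cited theorems. If you repair the first paragraph by simply invoking \cite{WZ11} (or \cite{FHK} together with \cite{GT7,GTZ12}) for existence when $d=1$, and then adopt the product-system reduction for general $d$, the argument becomes both correct and short.
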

\begin{remark}
Convergence is proved in  \cite{WZ11} and  \cite{FHK} for $d=1$.
The more general statement follows by using the $d=1$ case for product systems of the form
$T\times R$ acting on $X\times \Z/d\Z$ with the product measure,
 where $R$ is the translation by $1$ on $\Z/d\Z$, and for the functions
$F_1\otimes {\bf 1}_{d\Z+1}, F_2,\ldots, F_\ell$; of course, one also uses the fact that the relative density of the set $\P_d$ in the primes exists.
\end{remark}

 We will make use of  the following consequence of Theorem~\ref{T:FHK}:
\begin{proposition}	\label{P:conv}
	Suppose that the sequences $a_1,\ldots, a_\ell\colon \Z\to \mathbb{U}$
	admit log-correlations on the sequence of intervals $\bN$. Then for every $d\in \N$  the limit
	$$
	\E_{p\in\P_d}\Bigl(\lE_{n\in\bN}\prod_{j=1}^m b_j(n+ph_j)\Bigr)
	$$
	exists  for all  $m \in \N$, all  $h_1,\ldots, h_m\in \Z$, and all $b_1,\ldots,b_{m}\in \{a_1,\ldots, a_\ell, \overline{a_1},\ldots, \overline{a_\ell}\}$.
\end{proposition}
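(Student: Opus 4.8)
The plan is to pass to a joint Furstenberg system associated with $a_1,\ldots,a_\ell$ and $\bN$, and then quote Theorem~\ref{T:FHK}. Fix $m$, the shifts $h_1,\ldots,h_m\in\Z$, and $b_1,\ldots,b_m\in\{a_1,\ldots,a_\ell,\overline{a_1},\ldots,\overline{a_\ell}\}$. Proposition~\ref{P:correspondence} furnishes a system $(X,\mu,T)$ and functions $F_1,\ldots,F_m\in\{F_{0,1},\ldots,F_{0,\ell},\overline{F_{0,1}},\ldots,\overline{F_{0,\ell}}\}$ attached to $b_1,\ldots,b_m$ such that, applying \eqref{E:correspondence} with the integer shifts $ph_1,\ldots,ph_m$, we have $\lE_{n\in\bN}\prod_{j=1}^m b_j(n+ph_j)=\int_X\prod_{j=1}^m T^{ph_j}F_j\,d\mu$ for every $p\in\N$. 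Thus it suffices to show that $\E_{p\in\P_d}\int_X\prod_{j=1}^m T^{ph_j}F_j\,d\mu$ exists.

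The one point requiring care is that Theorem~\ref{T:FHK} is stated for the step pattern $(p,2p,\ldots,\ell p)$, whereas here the steps $ph_1,\ldots,ph_m$ may be negative, repeated, or have gaps. I would remove this discrepancy as follows: put $L:=\max_{1\le j\le m}|h_j|$ and, for each $i\in\{-L,\ldots,L\}$, set $G_i:=\prod_{j:\,h_j=i}F_j\in L^\infty(\mu)$, an empty product being the constant $1$. Then $\prod_{j=1}^m T^{ph_j}F_j=\prod_{i=-L}^L T^{pi}G_i$, and since $\mu$ is $T$-invariant, $\int_X\prod_{i=-L}^L T^{pi}G_i\,d\mu=\int_X\prod_{i=-L}^L T^{p(i+L)}G_i\,d\mu=\int_X G_{-L}\cdot\prod_{k=1}^{2L}T^{pk}G_{k-L}\,d\mu$, which is now of the required shape (the case $L=0$ being trivial, as the correlation is then independent of $p$).

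Finally, Theorem~\ref{T:FHK}, applied with the $2L$ functions $G_{1-L},\ldots,G_L$ in place of $F_1,\ldots,F_\ell$ and with the given $d$, produces $g\in L^2(\mu)$ with $\E_{p\in\P_d\cap[N]}\prod_{k=1}^{2L}T^{pk}G_{k-L}\to g$ in $L^2(\mu)$ as $N\to\infty$. Averaging commutes with the integral, so $\E_{p\in\P_d\cap[N]}\int_X G_{-L}\cdot\prod_{k=1}^{2L}T^{pk}G_{k-L}\,d\mu=\int_X G_{-L}\cdot\bigl(\E_{p\in\P_d\cap[N]}\prod_{k=1}^{2L}T^{pk}G_{k-L}\bigr)\,d\mu$, and Cauchy--Schwarz (using $G_{-L}\in L^2(\mu)$) shows this tends to $\int_X G_{-L}\,g\,d\mu$. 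Tracing back through the identities yields $\E_{p\in\P_d}\bigl(\lE_{n\in\bN}\prod_{j=1}^m b_j(n+ph_j)\bigr)=\int_X G_{-L}\,g\,d\mu$, so the limit exists. The whole substance of the argument is Theorem~\ref{T:FHK}; the regrouping-and-translation reduction is the only bit of bookkeeping, and it is the step where I would be most careful to get the indices right.
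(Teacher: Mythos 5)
Your proof is correct and follows essentially the same route as the paper: transfer the correlation to the joint Furstenberg system via Proposition~\ref{P:correspondence} and then invoke Theorem~\ref{T:FHK}. The only difference is that you spell out the regrouping-and-translation step needed to fit arbitrary integer shifts $h_1,\dots,h_m$ into the step pattern $(p,2p,\dots,\ell p)$ of Theorem~\ref{T:FHK}, a reduction the paper leaves implicit; your bookkeeping there is accurate.
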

\begin{proof}
	Let $(X,\CX, \mu,T)$ be the joint Furstenberg system associated with  $a_1,\ldots, a_\ell$ and  $\bN$, and let  also $F_{0,1},\ldots, F_{0,\ell}\in L^\infty(\mu)$ be as in Proposition~\ref{P:correspondence}.
	Using Theorem~\ref{T:FHK} we get  that  the limit
	$$
	\E_{p\in\P_d}\int_X\prod_{j=1}^m T^{ph_j}F_j\,d\mu
	$$
	exists for all
	  $m\in\N$,  all  $h_1,\ldots, h_m\in \Z$,  and all  $F_1,\ldots,F_{m}\in L^\infty(\mu)$. Combining this with identity   \eqref{E:correspondence}
we get the asserted convergence.
\end{proof}

\subsection{Aperiodic and strongly aperiodic multiplicative functions}\label{SS:aperiodic}
We denote by  $\mathcal{M}$ the set of all multiplicative functions $f\colon \N\to \mathbb{U}$, where $\mathbb{U}$ is the complex unit disc.
A \emph{Dirichlet character} is a periodic
completely multiplicative function $\chi$  with $\chi(1)=1$.	
We say that   $f\in \mathcal{M}$ is \emph{aperiodic} (or \emph{non-pretentious} using terminology from \cite{GS16}) if it averages to $0$ on every infinite arithmetic progression, meaning,
if
$$\E_{n\in \N}\, f(an+b)=0, \quad \text{  for all } a,b\in \mathbb{N}.
$$
This is equivalent to asserting that $\E_{n\in\N}\, f(n)\, d(n)=0$ for every periodic sequence $d\colon \N\to \C$, or that
$\E_{n\in \N}\, f(n)\, \chi(n)=0$ for every Dirichlet character $\chi$. In order to give easier to verify necessary conditions for aperiodicity,  we use a notion of   distance between two multiplicative functions defined as in  \cite{GS16}:

\begin{definition}
	We let $\D\colon \mathcal{M}\times \mathcal{M}\to [0,\infty]$ be given by
	$$
	\D(f,g)^2:=\sum_{p\in \P} \frac{1}{p}\,\big(1-\Re\big(f(p) \overline{g(p)}\big)\big)
	$$
	where $\Re(z)$ denotes the real part of a complex number $z$.
\end{definition}

It is shown in \cite[Theorem~1]{D83} that  $f\in \mathcal{M}$ is aperiodic if and only if
 $\D(f, \chi \cdot n^{it})=\infty$ for every $t\in \R$ and every  Dirichlet character $\chi$.
 Moreover, if $f$ takes real values, then $f$ is aperiodic if  and only if
 $\D(f, \chi)=\infty$ for every Dirichlet character $\chi$. In particular, the M\"obius and the Liouville functions are aperiodic.

For our purposes we also  need a notion introduced in \cite{MRT15}
that  is somewhat stronger than aperiodicity.
\begin{definition}\label{D:StronglyAperiodic}
 Let $\D\colon \mathcal{M}\times \mathcal{M}\times \mathbb{N} \to [0,\infty]$ be given by
	$$
	\D(f,g;N)^2:=\sum_{p\in \P\cap [N]} \frac{1}{p}\,\bigl(1-\Re\bigl(f(p) \overline{g(p)}\bigr)\bigr)
	$$
	and $M\colon \mathcal{M}\times \mathbb{N} \to [0,\infty)$ be given by
	$$
	M(f;N):=\min_{|t|\leq N} \D(f, n^{it};N)^2.
	$$
	The multiplicative function $f\in \mathcal{M}$ is
	{\em strongly  aperiodic}
	if $M(f\cdot\chi;N)\to \infty$ as $N
	\to \infty$ for every Dirichlet character $\chi$.
\end{definition}
Note  that strong aperiodicity implies aperiodicity. The converse is
not in general true (see  \cite[Theorem~B.1]{MRT15}), but it is
true for  real valued  multiplicative functions (see  \cite[Appendix~C]{MRT15}). In particular,  the    M\"obius and the Liouville functions are strongly aperiodic.
Furthermore,  if $f\in \mathcal{M}$ is aperiodic  and  $f(p)$ is a $d$-th root of unity
for all  $p\in \P$,
then $f$ is strongly aperiodic (see \cite[Proposition~6.1]{F16}). In particular, if  $f(p)$ is a nontrivial $d$-th root of unity for all $p\in\P$, then $f$ is strongly aperiodic (see \cite[Corollary~6.2]{F16}).

The hypothesis of strong aperiodicity is useful for our purposes because it enables us to use the following result of Tao~\cite[Corollary~1.5]{Tao15}:
\begin{theorem}\label{T:Tao}
Let $f\in \mathcal{M}$ be a strongly aperiodic multiplicative function.
Then we have
	$$
	\lE_{n\in \N}\,  f(n)\, \overline{f(n+h)}=0
$$
 for every $h\in \N$.
\end{theorem}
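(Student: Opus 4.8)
The plan is direct: this statement is the specialization of \cite[Corollary~1.5]{Tao15} to the pair of multiplicative functions $g_1=g_2=f$, so I would simply invoke that result. The one point that needs checking is that the non-pretentiousness hypothesis imposed on $g_1$ in \cite[Corollary~1.5]{Tao15} --- once the pretentious distance and its truncated, uniform-in-$t$ variant are unwound --- coincides with the strong aperiodicity of Definition~\ref{D:StronglyAperiodic}, i.e.\ with $M(f\cdot\chi;N)\to\infty$ for every Dirichlet character $\chi$; this is a routine comparison of definitions. I do not expect to give a new proof, but for the reader's convenience I would recall the shape of Tao's argument.

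The argument runs by contradiction: suppose $S(N):=\lE_{n\in[N]}\, f(n)\,\overline{f(n+h)}$ does not tend to $0$, so that $|S(N_k)|\ge\delta>0$ along some $N_k\to\infty$. The crux is Tao's \emph{entropy decrement argument}. Using the identities $f(pn)=f(p)f(n)$ and $f(pn+ph)=f(p)f(n+h)$, valid whenever $p\nmid n$ and $p\nmid n+h$, one shows that the logarithmically averaged correlation is, up to an error that becomes small once the scale $P$ is large, unchanged when one averages (with weight $1/p$) over primes $p\in[P,P^{2}]$ the prime dilates $n\mapsto f(pn)\,\overline{f(pn+ph)}=|f(p)|^{2}\,f(n)\,\overline{f(n+h)}$ of the sequence $n\mapsto f(n)\,\overline{f(n+h)}$, restricted to multiples of $p$; equivalently, the joint law of the blocks $\bigl(f(n+j)\,\overline{f(n+h+j)}\bigr)_{0\le j<H}$ under the logarithmically uniform measure on $[N_k]$ is approximately invariant under these dilations. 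The mechanism is the entropy \emph{decrement}: a suitably normalized entropy of these length-$H$ blocks would be forced to strictly decrease along a sequence of scales if the approximate dilation symmetry failed, which is incompatible with its being non-negative and bounded. Making this precise --- and in particular controlling the contribution of $n$ divisible by $p^{2}$ and the fact that $f$ need not be constant on the primes --- is the delicate heart of the proof, and is the step I would expect to be the main obstacle were one to reprove it from scratch.

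The last step converts this approximate dilation invariance into the vanishing of $S(N)$. One combines it with the Matom\"aki--Radziwi\l\l{} theorem on averages of bounded multiplicative functions in short intervals, together with Hal\'asz-type estimates, to run a Fourier-analytic (circle-method) argument showing that the only way the logarithmically averaged two-point correlation can fail to tend to $0$ is if $f$ pretends to be a function of the form $n\mapsto\chi(n)\,n^{it}$, i.e.\ if $\inf_{|t|\le N}\D(f\cdot\chi,\,n^{it};N)$ stays bounded along a sequence of $N$ for some Dirichlet character $\chi$. Strong aperiodicity, namely $M(f\cdot\chi;N)\to\infty$ for every Dirichlet character $\chi$, rules this out. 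This contradiction gives $\lim_{N\to\infty}S(N)=0$ for every $h\in\N$, as claimed.
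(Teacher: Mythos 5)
Your proposal matches the paper exactly: Theorem~\ref{T:Tao} is not proved in the paper but is quoted directly from \cite[Corollary~1.5]{Tao15}, with the strong aperiodicity hypothesis of Definition~\ref{D:StronglyAperiodic} being precisely the non-pretentiousness condition required there. Your sketch of Tao's entropy decrement argument and the Matom\"aki--Radziwi\l\l{} input is accurate but, as you note, is only expository; the citation is the whole proof here, just as in the paper.
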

\begin{remark}
	By adjusting the example in  \cite[Theorem~B.1]{MRT15}, it follows  that strong aperiodicity cannot be replaced by aperiodicity; in particular,  there exist  an aperiodic multiplicative function $f\in \mathcal{M}$, a positive constant $c$,  and a sequence of intervals ${\bf N}:=([N_k])_{k\in\mathbb{N}}$ with $N_k\to \infty$, such that
	$$
	|\lE_{n\in {\bf N}} \, f(n)\cdot \overline{f(n+h)} |\geq c, \quad \text{ for every } h\in \mathbb{N}.
	$$	
\end{remark}

\section{Correlation identities and ergodic consequences}\label{SS:Tao}
\subsection{Correlation  identities} If $a\colon \P\to \mathbb{U}$ is a sequence and $A$ is a non-empty finite or infinite subset of the primes we define $\lE_{p\in A}$ as in Section~\ref{SS:def}.
The following identity of Tao and Ter\"av\"ainen from \cite[Theorem~3.6]{TT17}
 is key for our purposes:
\begin{theorem}\label{T:Tao1}
	Suppose that the multiplicative functions $f_1,\ldots, f_\ell \colon \Z\to \mathbb{U}$  admit log-correlations on the sequence of intervals $\bN$.   Then  we have
	\begin{equation}\label{E:TT}
	\lE_{p\in \P}\Big|c_{p,m}\, \lE_{n\in \bN}\,  \prod_{j=1}^m g_j(n+h_j)-
	\, 	\lE_{n\in \bN}\,  \prod_{j=1}^{m} g_j(n+ph_j)\Big|=0
	\end{equation}
	for all $m \in \N$, all  $h_1,\ldots, h_m\in \Z$, and all $g_1,\ldots,g_{m}\in \{f_1,\ldots, f_\ell, \overline{f_1},\ldots, \overline{f_\ell}\}$, where $c_{p,m}:=\prod_{j=1}^m g_j(p)$, $p\in \P$.
\end{theorem}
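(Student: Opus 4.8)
This identity is \cite[Theorem~3.6]{TT17} (and, as noted, a variant is implicit in \cite{Tao15}), so my plan is simply to invoke it after reconciling notation. For orientation, let me indicate why it holds and where the difficulty lies, since the argument also explains why logarithmic averages are indispensable. Fix $m$, the shifts $h_1,\dots,h_m$ and the functions $g_1,\dots,g_m\in\{f_1,\dots,f_\ell,\overline{f_1},\dots,\overline{f_\ell}\}$, and write $F(n):=\prod_{j=1}^m g_j(n+h_j)$ and $F_p(n):=\prod_{j=1}^m g_j(n+ph_j)$, both $\mathbb{U}$-valued sequences on $\N$.

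The first ingredient is a Mertens-type averaging identity: for any bounded sequence $b$ and any dyadic block $[2^s,2^{s+1})$ with $2^s$ small relative to the scale considered,
$$
\lE_{n\in[N]}\,b(n)=\E_{p\in\P\cap[2^s,2^{s+1})}\Bigl(p\cdot\lE_{n\in[N]}\bigl(\one_{p\mid n}\,b(n)\bigr)\Bigr)+o(1),
$$
because $\lE_{n\in[N]}\one_{p\mid n}=\tfrac1p+o(1)$ while $\sum_{2^s\le p<2^{s+1}}\tfrac1p\to\log2$; averaging this over the scales $s$ logarithmically is exactly what manufactures the operator $\E^{*}_{p\in\P}$ appearing in \eqref{E:TT}. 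The second ingredient extracts $c_{p,m}$: applying the above to $b=F_p$, on the event $p\mid n$ we write $n=pm'$, so that $n+ph_j=p(m'+h_j)$, and multiplicativity gives $g_j(p(m'+h_j))=g_j(p)\,g_j(m'+h_j)$ unless $p\mid m'+h_j$; the exceptional residue classes have relative logarithmic density $O(1/p)$, negligible after the $\E^{*}_p$-average, whence $p\cdot\lE_{n\in[N]}(\one_{p\mid n}\,F_p(n))=c_{p,m}\cdot\lE_{m'\in[N/p]}F(m')+O(1/p)$. Together these suggest $\lE_{n\in\bN}F_p(n)=\E^{*}_{p\in\P}\bigl(c_{p,m}\,\lE_{n\in\bN}F(n)\bigr)$, provided one knows that the correlations of $F$ are stable under dilating the intervals $([N_k])$ to $([N_k/p])$.

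The gap between this heuristic and \eqref{E:TT} is the hard part: the above is an equality of $p$-\emph{averages}, whereas \eqref{E:TT} demands that $c_{p,m}\,\lE_{n\in\bN}F(n)-\lE_{n\in\bN}F_p(n)$ be small in $L^1$ with respect to the $\E^{*}_p$-average. Closing this gap — and simultaneously legitimising the scale-comparison of $[N/p]$ against $[N]$ used above — is the content of Tao's entropy decrement argument: one tracks the entropy of the joint law of the values of $F$ along $\bN$ as it is successively refined by the $\sigma$-algebras ``divisibility by a prime in the $s$-th dyadic block''; being bounded and monotone, this entropy can jump appreciably only on a logarithmically sparse set of blocks, and off that set the mutual information between the values of $F$ and divisibility by a prime from the block is negligible, forcing $\lE_n F_p(n)\approx c_{p,m}\,\lE_n F(n)$ for $\E^{*}_p$-almost every $p$. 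This entropy bookkeeping, which genuinely needs the logarithmic normalisation in order to telescope, is the main obstacle, and in practice I would invoke \cite[Theorem~3.6]{TT17} rather than reproduce it.
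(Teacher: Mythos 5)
Your proof coincides with the paper's: Theorem~\ref{T:Tao1} is not reproved here but quoted from \cite[Theorem~3.6]{TT17}, the only substantive point being the one you wave at with ``reconciling notation'' --- namely that \cite{TT17} works with generalized limit functionals, so one applies their result to a generalized limit $\wt\lim_{k\to\infty}\lE_{n\in[N_k]}$, which agrees with $\lE_{n\in\bN}$ precisely because the log-correlation hypothesis guarantees the ordinary limits exist. One caution about your orientation sketch: the displayed ``Mertens-type identity'' is \emph{not} valid for an arbitrary bounded sequence $b$ (your justification only checks that the weights $p\,\one_{p\mid n}$ have the right mean, not that they are close to $1$ in $L^1$ over $n$, which they are not); that approximate identity, for the specific correlation sequences and for most primes and scales, is exactly what the entropy decrement argument establishes, so it is the conclusion of the hard step rather than a free first ingredient.
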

\begin{remarks}
$\bullet$ A variant of this result  is also implicit in the article of Tao \cite{Tao15} and was also
used in \cite{FH18} for $f_1=\cdots=f_\ell=\mu$ or $\lambda$ using a different averaging scheme. The current version is more suitable for our purposes.

$\bullet$	In \cite{TT17} the result is proved for  a class of generalized limit functionals in
place of 	$\lE_{n\in \bN}$. Assuming that $\bN=([N_k])_{k\in\N}$, the asserted version follows if one  uses a generalized limit functional of the form $\wt\lim_{k\to\infty}\lE_{n\in {[N_k]}}$ since it coincides with the standard limit $\lim_{k\to\infty}\lE_{n\in {[N_k]}}=\lE_{n\in\bN}$
whenever this limit exists.
	\end{remarks}

For the record, we mention the following  identity for general sequences which follows from the proof  of \cite[Theorem~3.6]{TT17} without any essential change; Theorem~\ref{T:Tao1} is an easy consequence of this identity:
\begin{theorem}
	Let $\bN$ be  a sequence of intervals,   $a_1,\ldots, a_\ell\colon \Z\to \mathbb{U}$  be sequences,   and   $h_1,\ldots, h_\ell\in \Z$.
	Then,
	assuming that   for every $p\in\P$ the limits $\lE_{n\in \bN}$ below exist,
	we have the identity
$$
	\lE_{p\in \P}
	\Big|\lE_{n\in \bN}\,\prod_{j=1}^\ell a_j(pn+ph_j)-
	\lE_{n\in \bN}\, \prod_{j=1}^\ell a_j(n+ph_j)\Big|=0.
$$
\end{theorem}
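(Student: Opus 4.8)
The plan is to reduce the general statement to the multiplicative case already recorded as Theorem~\ref{T:Tao1}, by observing that the quantity $\prod_{j=1}^\ell a_j(pn+ph_j)=\prod_{j=1}^\ell a_j(p(n+h_j))$ has no free ``$c_{p,m}$''-type weight because the dilation acts on every coordinate of the argument uniformly. Concretely, for each prime $p$ I would introduce the auxiliary sequence $a_j^{(p)}(n):=a_j(pn)$, so that the first average inside the absolute value is $\lE_{n\in\bN}\prod_{j=1}^\ell a_j^{(p)}(n+h_j)$. The identity to be proved then says that, on $\E^*_{p\in\P}$-average, replacing $a_j(pn+ph_j)$ by $a_j(n+ph_j)$ costs nothing. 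This is precisely the content of the proof of \cite[Theorem~3.6]{TT17}: the Tao--Ter\"av\"ainen argument compares, for a correlation along an interval, the contribution of integers lying in a residue class $p\mathbb Z$ (where the correlation literally becomes the dilated one $\prod_j a_j(p(n+h_j))$ after rescaling $n\mapsto pn$) with the original correlation, and shows the two agree after the $\E^*_{p\in\P}$ weighting. Since their proof never uses multiplicativity of the $a_j$ at this step --- multiplicativity only enters afterward to replace $a_j(p(n+h_j))$ by $a_j(p)a_j(n+h_j)$ and thereby produce the weights $c_{p,m}$ --- the same argument verbatim gives the weight-free identity for arbitrary bounded sequences, under the sole standing hypothesis that the relevant logarithmic limits $\lE_{n\in\bN}$ exist for each $p$.

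The key steps, in order, are: (i) unwind the substitution $n\mapsto pn$ inside the logarithmically averaged sum, noting that $\lE_{n\in[N]}F(pn)$ and $\lE_{n\in[N]}F(n)\one_{p\mid n}\cdot(\text{normalization})$ differ only in the normalizing factor and the boundary, which is negligible in the logarithmic density; (ii) invoke the entropy-decrement / circle-method machinery of \cite{Tao15,TT17} exactly as in the proof of \cite[Theorem~3.6]{TT17}, but stopping \emph{before} the step where multiplicativity is applied; (iii) conclude the displayed identity. Then, as a remark, Theorem~\ref{T:Tao1} follows by specializing to multiplicative $f_j$ and factoring $a_j(p(n+h_j))=a_j(p)\,a_j(n+h_j)=c_{p,m}$-fold, which introduces the stated weight $c_{p,m}=\prod_{j=1}^m g_j(p)$; this is why the excerpt says Theorem~\ref{T:Tao1} is an easy consequence.

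\textbf{Main obstacle.} The genuine difficulty is \emph{not} in the present statement at all --- given the Tao--Ter\"av\"ainen result, the present identity is strictly more elementary, since it is the intermediate step from which their theorem is deduced. The only care needed is bookkeeping: one must check that the passage from $\prod_j a_j(pn+ph_j)$ to $\prod_j a_j(n+ph_j)$ is exactly the comparison carried out in \cite{TT17} and not some variant requiring $p$-periodicity or multiplicativity, and that the hypothesis ``the limits $\lE_{n\in\bN}$ exist for every $p\in\P$'' is enough to make every average in their argument a bona fide limit rather than a generalized limit (which it is, by the remark following Theorem~\ref{T:Tao1} in the excerpt: one simply replaces their generalized limit functional by the honest limit $\lim_{k\to\infty}\lE_{n\in[N_k]}$ wherever it exists). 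So the proof is essentially a pointer: ``this is \cite[Theorem~3.6]{TT17} with the final multiplicativity step omitted,'' together with the elementary rescaling remark of step (i). If a fuller treatment were wanted, one would reproduce the half-page of \cite{TT17} that establishes the comparison, but for the purposes of this paper citing the proof and noting the one-line modification suffices.
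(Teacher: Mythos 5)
Your proposal is correct and matches the paper's treatment: the paper offers no independent proof of this statement, but simply records it as following ``from the proof of \cite[Theorem~3.6]{TT17} without any essential change,'' with Theorem~\ref{T:Tao1} then deduced by applying multiplicativity to factor $a_j(p(n+h_j))=a_j(p)a_j(n+h_j)$ --- precisely the reduction and ordering you describe. Your additional bookkeeping remarks (the rescaling $n\mapsto pn$ under logarithmic averaging, and replacing the generalized limit functional by the honest limit where it exists) are consistent with the paper's remark following Theorem~\ref{T:Tao1}.
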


\subsection{A consequence of the correlation identities }
We are going to combine Theorem~\ref{T:Tao1} with  Theorem~\ref{T:TT} stated below  in order to prove
a variant of the identity \eqref{E:TT}  in which the weights $c_{p,m}$ are all equal to $1$.
For convenience we introduce the following notation:
\begin{definition} Let $a,b\colon \P\to \mathbb{U}$ be sequences.  We write
	$a\sim b$ if $$
	\lE_{p\in \P}(1-\Re(a(p)\cdot \overline{b(p)}))=0.
	$$
\end{definition}
\begin{remarks}
$\bullet$	If we restrict to sequences that take values on the unit circle, then $\sim$ is an equivalence relation and $a\sim b$ is equivalent to $\lE_{p\in \P}|a(p)-b(p)|=0$.

$\bullet$ Using terminology from  \cite{TT17} we  have  that two multiplicative functions $f,g\colon \Z\to\mathbb{U}$ satisfy  $f\sim g$ exactly when   ``$f$ weakly pretends to be $g$''.
\end{remarks}

We will use the following basic properties:
\begin{lemma}\label{L:sim}
If  $a,b,c,d\colon \P \to \mathbb{U}$ are sequences, then the following properties hold:
\begin{enumerate}
	\item If $a\sim b$, then $\overline{a}\sim \overline{b}$.
	
		\item If $a\sim b$ and $b\sim c$, then  $a \sim c$.
	
	\item If $a\sim b$ and $c\sim d$, then  $a c\sim b d$.
	
	\item If $a\sim b$, then $\lE_{p\in \P}|a(p)-b(p)|=0$.
\end{enumerate}
\end{lemma}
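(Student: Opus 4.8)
The plan is to prove the four properties of Lemma~\ref{L:sim} directly from the definition of $\sim$, treating $\E_{p\in\P}(1-\Re(a(p)\overline{b(p)}))=0$ as the fundamental hypothesis in each case. The key elementary fact I would isolate first is that for $z,w\in\mathbb{U}$ one has $0\le 1-\Re(z\overline w)\le 2$, and more importantly the two-sided pinch $\tfrac12|z-w|^2\le (1-\Re(z\overline w))\cdot C$ is \emph{not} quite available on all of $\mathbb{U}$ (it fails when $|z|,|w|<1$), so I must be careful: items (i)--(iii) should be proved purely through the $1-\Re(\cdot)$ formulation, and only item (iv) should invoke a one-sided inequality valid on the disc.

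For item (i), I would observe that $1-\Re(\overline a(p)\,\overline{\overline b(p)}) = 1-\Re(\overline{a(p)}\,b(p)) = 1-\Re(a(p)\overline{b(p)})$ since $\Re$ is invariant under conjugation; hence $\overline a\sim\overline b$ is literally the same averaged quantity, and averaging along $\P$ (which here means $\E_{p\in\P}$, the natural-density average over primes) preserves the identity termwise. For item (ii), the plan is a triangle-type estimate: write $1-\Re(a\overline c) = (1-\Re(a\overline b)) + (1-\Re(b\overline c)) - (1-\Re(a\overline b)-\Re(b\overline c)+\Re(a\overline c))$ and bound the bracketed correction. The cleaner route is to note that for $u,v,w\in\mathbb{U}$, $1-\Re(u\overline w)\le 2(1-\Re(u\overline v)) + 2(1-\Re(v\overline w))$ up to an absolute constant — this follows because $|u\overline w - u\overline v| \le |w-v|$ and one can pass between $1-\Re(\cdot)$ and the relevant moduli using $1-\Re(z)\le |1-z|$ and, on the other side, $|1-z|^2 = (1-\Re z)^2 + (\Im z)^2 \le 2(1-\Re z) \cdot 2 + \dots$; I would just cite the standard "pretentious triangle inequality" style bound and conclude that the average of the left side is dominated by a constant times the sum of the two vanishing averages. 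For item (iii), since $|a(p)c(p)-b(p)d(p)|\le |a(p)-b(p)| + |c(p)-d(p)|$ (products of unit-disc elements), combined with item (iv) to convert the hypotheses $a\sim b$, $c\sim d$ into $\E_{p\in\P}|a(p)-b(p)|=0$ and $\E_{p\in\P}|c(p)-d(p)|=0$, and then converting back via $1-\Re(z\overline w)\le|z-w|$ (valid since $1-\Re(z\overline w)=\Re(1-z\overline w)\le |1-z\overline w| = |z-w|$ when $|w|=1$, and for general $w\in\mathbb{U}$ one has $1-\Re(z\overline w) \le |z-w| + (1-|w|) \le 2|z-w|$ after a short computation... actually here I would want to be slightly more careful and may restrict, as the remark after the lemma does, or absorb the defect).

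The step I expect to be the genuine obstacle is item (iv): deducing $\E_{p\in\P}|a(p)-b(p)|=0$ from $\E_{p\in\P}(1-\Re(a(p)\overline{b(p)}))=0$ when $a,b$ take values in the full disc $\mathbb{U}$ rather than the circle. The clean inequality $|z-w|^2 \le 2(1-\Re(z\overline w))$ holds only when $|z|=|w|=1$; on the disc one instead has $|z-w|^2 = |z|^2+|w|^2-2\Re(z\overline w) \le 2 - 2\Re(z\overline w) = 2(1-\Re(z\overline w))$, which in fact \emph{does} work since $|z|^2+|w|^2\le 2$. So $|z-w| \le \sqrt 2\,(1-\Re(z\overline w))^{1/2}$, and by Cauchy--Schwarz $\E_{p\in\P}|a(p)-b(p)| \le \sqrt 2\,\bigl(\E_{p\in\P}(1-\Re(a(p)\overline{b(p)}))\bigr)^{1/2} = 0$. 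This turns out to be painless after all, which also cleans up item (iii). So the real work is just bookkeeping the constants in the triangle inequality for item (ii); I would present (iv) first, then use it to streamline (iii), and dispatch (i) and (ii) by the termwise identity and the pretentious triangle inequality respectively.
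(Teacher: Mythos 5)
Your items (i) and (iv) match the paper: (i) is the termwise identity $\Re(\overline{a}\,b)=\Re(a\overline{b})$, and (iv) uses exactly the paper's inequality $|u-v|^2=|u|^2+|v|^2-2\Re(u\overline{v})\le 2(1-\Re(u\overline{v}))$ on $\mathbb{U}$, followed by Cauchy--Schwarz. Where you diverge is in (ii) and (iii). The paper handles both with a single elementary pointwise estimate, $1-\Re(uv)\le 2\bigl(1-\Re(u)+1-\Re(v)\bigr)$ for $u,v\in\mathbb{U}$, proved first on the circle and extended to the disc by convexity; applied with $u=a\overline{b}$, $v=c\overline{d}$ this gives (iii) in one line with no detour through moduli, and a variant gives (ii). You instead cite the pretentious triangle inequality for (ii) (legitimate, and arguably cleaner since it avoids the $|b|^2$ that appears when one multiplies $a\overline{b}$ by $b\overline{c}$), and you route (iii) through (iv) and the estimate $|ac-bd|\le|a-b|+|c-d|$.

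The one genuine soft spot is the ``convert back'' step of your (iii). The inequality you write, $1-\Re(z\overline{w})\le 2|z-w|$, is false on the disc: take $z=w$ with $|w|<1$, so the left side is $1-|w|^2>0$ while the right side is $0$. What is true is $1-\Re(z\overline{w})\le |z-w|+(1-|w|^2)$, and the defect term $1-|w|^2$ with $w=b(p)d(p)$ must be shown to average to zero. This does follow from your hypotheses --- since $1-\Re(a\overline{b})\ge 1-|a||b|\ge 1-|b|\ge 0$, the relation $a\sim b$ forces $\E_{p\in\P}(1-|b(p)|)=0$, and likewise for $d$, whence $\E_{p\in\P}(1-|b(p)d(p)|)=0$ --- but this observation is the actual content you are missing, and ``restrict to the circle as in the remark'' is not an option because the lemma is stated for $\mathbb{U}$-valued sequences. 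With that one paragraph added, your argument is complete; note that the paper's direct route for (iii) avoids the issue entirely because $a\overline{b}\cdot c\overline{d}$ is exactly $ac\,\overline{bd}$, with no stray modulus factor.
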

\begin{proof}
Property $\text{(i)}$ is obvious. Properties  $\text{(ii)}$ and $\text{(iii)}$ follow from the estimate
\begin{equation}\label{E:uv}
1-\Re(uv)\leq 2(1-\Re(u)+1-\Re(v))
\end{equation}
 which holds for all $u,v\in \mathbb{U}$. One way to prove this, is to first consider the case where $|u|=|v|=1$; in this case  the estimate is equivalent to $|u-v|^2\leq 2(|1-u|^2+|1-v|^2)$, which follows from the Cauchy Schwarz inequality. One then deduces from this the general case by expressing arbitrary $u,v\in \mathbb{U}$ as  a convex combination of two elements on the unit circle and taking advantage of the linearity features of \eqref{E:uv}.
  Property $\text{(iv)}$ follows from the estimate
$$
|u-v|^2\leq 2(1-\Re(u\overline{v}))
$$ which holds for all $u,v\in \mathbb{U}$.
\end{proof}

We will use the next result of  Tao and  Ter\"av\"ainen   \cite[Theorem 1.2]{TT17}:
\begin{theorem}\label{T:TT}
	Let $f_1,\ldots, f_\ell\colon \Z\to\mathbb{U}$ be multiplicative functions.  Suppose that
for every Dirichlet character $\chi$ we have 	$f_1\cdots f_\ell\nsim \chi$. Then
$$
\lE_{n\in\N}\,  \prod_{j=1}^\ell f_j(n+h_j)=0
$$
for all $h_1,\ldots, h_\ell\in \Z$.
\end{theorem}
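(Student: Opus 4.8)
The plan is to follow the method of Tao \cite{Tao15} and Tao--Ter\"av\"ainen \cite{TT17}: the statement is in fact \cite[Theorem~1.1]{TT17}, and a self-contained proof would be organized around Theorem~\ref{T:Tao1}. I would argue by contradiction, supposing that along some sequence of intervals $\bN$ one has $\delta:=\bigl|\lE_{n\in\bN}\prod_{j=1}^\ell f_j(n+h_j)\bigr|>0$. Beyond $1$-boundedness, the only property of the $f_j$ that enters is their near-homogeneity under dilation by large primes: for a prime $p$ and any $n$ with $p\nmid n+h_j$ for all $j$, multiplicativity gives $f_j(p(n+h_j))=f_j(p)f_j(n+h_j)$, while the exceptional $n$ contribute $O(\ell/p)$ to any logarithmic average.

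The engine is the \emph{approximate dilation invariance} packaged by Theorem~\ref{T:Tao1}, which is the analytic output of an entropy decrement argument in the style of \cite{Tao15}: for a set of primes $p$ of relative density one, $\lE_{n\in\bN}\prod_j f_j(n+ph_j)$ agrees with $\bigl(\prod_j f_j(p)\bigr)\lE_{n\in\bN}\prod_j f_j(n+h_j)$ up to an error that is small on average over $p$. Feeding this identity --- together with Proposition~\ref{P:conv}, which legitimizes the averages over primes --- into a Fourier-analytic major-arc/minor-arc decomposition of the correlation on short windows $[x,x+H]$, one shows that the minor-arc contribution is negligible by Hal\'asz's inequality and the Matom\"aki--Radziwi\l\l{} theorem applied to the product $f_1\cdots f_\ell$ (for the higher-order interactions one also needs higher-order Fourier-uniformity estimates involving nilsequences), while the dilation invariance confines the surviving major-arc frequencies to rationals of bounded denominator. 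Tracking the one main term that survives, $\delta>0$ then forces $\prod_j f_j(p)\,\overline{\chi(p)}\to 1$ along a set of primes of positive density for some Dirichlet character $\chi$, that is $f_1\cdots f_\ell\sim\chi$, contradicting the hypothesis.

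The decisive difficulty is the entropy decrement argument underlying Theorem~\ref{T:Tao1} and its quantitative interlocking with the Fourier estimates --- in particular, calibrating the window length $H$ against the dyadic scale $P$ of the primes so that the exponential sums over $[x,x+H]$ are usable while the total entropy increment summed over a dyadic range of scales stays bounded. This is the technical core of \cite{Tao15} and \cite{TT17}, which I would import rather than reprove; the only point worth stressing is that, once Theorem~\ref{T:Tao1} and the standard non-correlation estimates for non-pretentious multiplicative functions are granted, the remaining steps are essentially bookkeeping with multiplicativity.
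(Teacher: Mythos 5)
The paper offers no proof of this statement: it is quoted verbatim as \cite[Theorem~1.1]{TT17} and used as a black box, so your decision to identify it as such and import the technical core from Tao--Ter\"av\"ainen is operationally exactly what the authors do.

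One caution about the sketch you wrap around the citation. The route you describe --- approximate dilation invariance from the entropy decrement argument, followed by a major-arc/minor-arc Fourier decomposition on short windows $[x,x+H]$ with Hal\'asz and Matom\"aki--Radziwi{\l}{\l} killing the minor arcs --- is the architecture of the two-point case $\ell=2$ in \cite{Tao15}. For general $\ell$ the ``higher-order Fourier-uniformity estimates'' you allude to amount to the local higher-order Fourier uniformity conjecture, which was not available then and is still not known in the required generality; \cite{TT17} deliberately avoids it. Their actual proof of Theorem~1.1 combines the dilation identity (Theorem~\ref{T:Tao1}) with a structural theorem for logarithmically averaged correlation sequences, proved via the Furstenberg correspondence together with the ergodic machinery of Host--Kra and Leibman's results on nilsequences and multiple correlation sequences --- this is precisely why the paper's remark cites \cite{HK,L15a,L15b} alongside \cite{GT08,GT09b} --- and then uses multiplicativity to force the resulting nilsequence to be essentially periodic, at which point the hypothesis $f_1\cdots f_\ell\nsim\chi$ gives the vanishing. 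Since you are importing the result rather than reproving it, this does not invalidate your proposal; but taken literally as a proof plan, the major/minor-arc route would not close for $\ell\ge 3$.
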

\begin{remarks}
$\bullet$	The use of logarithmic averages is essential for the statement to hold. For example,
	take $\ell=1$ and let  $f_1(n):=n^{it}$, $n\in\N$, for some non-zero  $t\in \R$. Then $f_1 \nsim\chi$ for every Dirichlet character $\chi$ but  the limit
	$\lim_{N\to\infty}\E_{n\in [N]} \, n^{it}$ does not exist (since $\E_{n\in [N]}\, n^{it}=\frac{N^{it}}{it+1}+o(1)$). On the other hand we have that $\lE_{n\in\N}\,  n^{it}=0$.

$\bullet$	 The proof of Theorem~\ref{T:TT} depends crucially on deep results from ergodic theory such as
\cite{HK,L15a,L15b} and analytic number theory  \cite{GT08,GT09b}. 	
\end{remarks}
The next result is a key ingredient in our argument (recall that $\P_d=\P\cap (d\N+1)$):
\begin{proposition}\label{P:constantone}
		Let $f_1,\ldots, f_\ell\colon \Z\to\mathbb{U}$ be multiplicative functions. There exists $d\in \N$ such that the following holds:
	If $f_1,\ldots, f_\ell$
	admit log-correlations on the sequence of intervals $\bN$, then
	\begin{equation}\label{E:noconstant}
	\lE_{p\in \P_d}\Big|\lE_{n\in \bN}\,  \prod_{j=1}^{m} g_j(n+h_j)-
	\, \lE_{n\in \bN}\,  \prod_{j=1}^{m}g_j(n+ph_j)\Big|=0
	\end{equation}
	for all $m \in \N$, all  $h_1, \ldots, h_{m} \in \Z$, and all $g_1,\ldots,g_{m}\in \{f_1,\ldots, f_\ell, \overline{f_1},\ldots, \overline{f_\ell}\}$.
\end{proposition}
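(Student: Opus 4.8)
The plan is to upgrade the Tao--Ter\"av\"ainen identity of Theorem~\ref{T:Tao1}, which carries the unwanted weights $c_{p,m}=\prod_{j=1}^m g_j(p)$, into the weight-free identity \eqref{E:noconstant} by choosing $d$ so that, after restricting the prime variable to $\P_d$, every product $\prod_{j=1}^m g_j(p)$ that can arise is forced to equal $1$ in the relevant averaged sense. The decisive observation is that the \emph{only} products $\prod_{j=1}^m g_j$ (with $g_j\in\{f_1,\dots,f_\ell,\overline{f_1},\dots,\overline{f_\ell}\}$) that contribute a nonzero term on the left of \eqref{E:TT} are those for which the correlation $\lE_{n\in\bN}\prod_j g_j(n+h_j)$ does not vanish identically; and by Theorem~\ref{T:TT}, if $\lE_{n\in\N}\prod_j g_j(n+h_j)\neq 0$ for \emph{some} shifts, then the product $g:=g_1\cdots g_m$ must satisfy $g\sim\chi$ for some Dirichlet character $\chi$. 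So the potentially troublesome weights are governed by a restricted supply of multiplicative functions, each ``weakly pretending'' to be a Dirichlet character.

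\textbf{Key steps.} First I would set up the bookkeeping: a product $g_1\cdots g_m$ as above is determined by the vector $(k_1,\dots,k_\ell)\in\Z^\ell$ of signed multiplicities of each $f_i$, and (absorbing complex conjugates) only the residues of the $k_i$ modulo suitable integers matter up to the equivalence $\sim$. By Lemma~\ref{L:sim}, $f_i\cdot\overline{f_i}\sim 1$, so each relevant product reduces, modulo $\sim$, to one of \emph{finitely many} multiplicative functions $g^{(1)},\dots,g^{(r)}$ built from $f_1,\dots,f_\ell$ using bounded exponents. Second, I would split these into two classes: those $g^{(s)}$ for which $g^{(s)}\nsim\chi$ for every Dirichlet character $\chi$, and those for which $g^{(s)}\sim\chi_s$ for some Dirichlet character $\chi_s$. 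For the first class, Theorem~\ref{T:TT} gives $\lE_{n\in\N}\prod_j g_j(n+h_j)=0$ for all shifts; the corresponding terms in \eqref{E:TT} vanish identically, so these weights are irrelevant and may be ignored. (Here I would need to check that ``admits log-correlations on $\bN$'' together with the vanishing of the ordinary-$\N$ logarithmic average forces the $\bN$-average to vanish too --- this needs a small argument, since in general $\lE_{n\in\bN}$ can differ from $\lE_{n\in\N}$; but Theorem~\ref{T:TT} applies to the functional $\lE_{n\in\bN}$ directly via the same generalized-limit remark used for Theorem~\ref{T:Tao1}, so in fact $\lE_{n\in\bN}\prod_j g_j(n+h_j)=0$ outright.) For the second class, since $g^{(s)}\sim\chi_s$ and each $\chi_s$ is periodic with some period $q_s$, I would take $d$ to be a common multiple of all the moduli $q_s$ (over the finitely many $s$). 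Then for $p\in\P_d$ we have $p\equiv 1\pmod{q_s}$, hence $\chi_s(p)=1$; and Lemma~\ref{L:sim}(iv) gives $\E_{p\in\P}|g^{(s)}(p)-\chi_s(p)|=0$. Restricting to $\P_d$ (a set of positive relative density in $\P$) and using that the $\E^*$ functional is an average, the contribution of $|g^{(s)}(p)-1|$ is negligible, so the weights $c_{p,m}=\prod_j g_j(p)$ may be replaced by $1$ at no cost inside the $\E^*_{p\in\P_d}$ average.

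\textbf{Assembling the estimate.} Finally, I would combine the pieces: starting from \eqref{E:TT} (valid on all of $\P$, hence after restricting to $\P_d$, up to renormalizing by the relative density $\P_d\subset\P$ which is harmless inside $\E^*$), for each fixed tuple $(m,h_1,\dots,h_m,g_1,\dots,g_m)$ write
\[
\E^*_{p\in\P_d}\Bigl|\lE_{n\in\bN}\prod_{j=1}^m g_j(n+h_j)-\lE_{n\in\bN}\prod_{j=1}^m g_j(n+ph_j)\Bigr|
\]
and bound it by
\[
\E^*_{p\in\P_d}\Bigl|c_{p,m}\,\lE_{n\in\bN}\prod_{j=1}^m g_j(n+h_j)-\lE_{n\in\bN}\prod_{j=1}^m g_j(n+ph_j)\Bigr|
+\E^*_{p\in\P_d}\bigl|1-c_{p,m}\bigr|\cdot\Bigl|\lE_{n\in\bN}\prod_{j=1}^m g_j(n+h_j)\Bigr|.
\]
The first term is $0$ by Theorem~\ref{T:Tao1}. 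In the second term, either $\lE_{n\in\bN}\prod_j g_j(n+h_j)=0$ (first class), or $g_1\cdots g_m=g^{(s)}\sim\chi_s$ and the factor $\E^*_{p\in\P_d}|1-c_{p,m}|=\E^*_{p\in\P_d}|1-g^{(s)}(p)|=0$ by the choice of $d$ and Lemma~\ref{L:sim}(iv); in either case the second term vanishes. This yields \eqref{E:noconstant}.

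\textbf{Main obstacle.} The delicate point is the interface between the $\E^*$ averaging functional and the set $\P_d$: one must verify that $\E^*_{p\in\P_d}|1-g^{(s)}(p)|=0$ genuinely follows from $\E_{p\in\P}|1-g^{(s)}(p)|$ being zero over the \emph{whole} primes, i.e. that passing from a Ces\`aro-type average over $\P$ to the dyadic-logarithmic $\E^*$ average over the positive-density subset $\P_d$ preserves the vanishing. This requires that $g^{(s)}\sim\chi_s$ implies not just $\E_{p\in\P}(1-\Re(g^{(s)}\overline{\chi_s}))=0$ but summability of $\sum_{p}\tfrac1p(1-\Re(g^{(s)}(p)\overline{\chi_s(p)}))$ --- which is exactly what $\sim$ does \emph{not} quite give (it gives only density-zero, i.e. the weaker logarithmic-density statement $\E_{p\in\P}(1-\Re(\cdot))=0$). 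So one must argue purely at the level of relative densities: $|1-g^{(s)}(p)|$ is bounded, its average over $\P$ is $0$, $\P_d$ has positive relative density in $\P$, and the $\E^*$ functional is dominated by an ordinary average over $\P$; spelling this domination out carefully (and checking the existence of the relevant limits, so that $\E^*$ is well-defined here) is the crux of the argument. Everything else is bookkeeping with Lemma~\ref{L:sim} and finiteness of the exponent range.
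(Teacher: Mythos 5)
Your overall strategy is the paper's: combine Theorem~\ref{T:Tao1} with Theorem~\ref{T:TT}, discard the products that do not pretend to be a Dirichlet character (their correlations vanish outright), and choose $d$ so that the surviving weights $c_{p,m}$ become negligible on $\P_d$. The triangle-inequality assembly and the passage from $\E_{p\in\P_d}$ to $\E^*_{p\in\P_d}$ (your ``main obstacle'') are fine and are handled the same way in the paper. But there is a genuine gap in the step that produces a \emph{single} $d$ working for all $m$ simultaneously.

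First, your claim that $f_i\cdot\overline{f_i}\sim 1$ ``by Lemma~\ref{L:sim}'' is not automatic: $f_i\overline{f_i}(p)=|f_i(p)|^2$, and $|f_i|^2\sim 1$ fails whenever $|f_i(p)|$ does not tend to $1$ in density on the primes. The paper first disposes of the degenerate case $\E_{n\in\N}|f_j(n)|=0$ (where all relevant correlations vanish) and otherwise invokes Wirsing's theorem to get $\sum_p(1-|f_j(p)|)/p<\infty$, hence $|f_j|\sim 1$; you need this reduction before any cancellation of conjugates. Second, and more seriously, your finiteness claim --- that ``only the residues of the $k_i$ modulo suitable integers matter up to $\sim$,'' so the relevant products reduce to finitely many classes $g^{(1)},\dots,g^{(r)}$ with bounded exponents --- is false in general. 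Take $\ell=1$ and $f$ completely multiplicative with $f(p)=\e^{2\pi i\alpha}$ for all $p$, $\alpha$ irrational: then $f^k\nsim f^{k'}$ for $k\neq k'$, so the $\sim$-classes of the powers are pairwise distinct and no bounded set of exponents captures them. Since $m$ is unbounded, you are a priori facing infinitely many products pretending to be infinitely many different characters, and ``a common multiple of all the moduli $q_s$'' need not exist. The paper's fix is structural: the set $K\subset\Z^\ell$ of exponent vectors $(k_1,\dots,k_\ell)$ for which $\prod_j f_j^{k_j}\sim\chi$ for some Dirichlet character is a \emph{subgroup} (this uses $|f_j|\sim 1$ and Lemma~\ref{L:sim}(i)--(iii) together with the fact that products and conjugates of Dirichlet characters are Dirichlet characters), hence finitely generated; one takes $d$ to be a common period of the characters attached to a finite generating set, and then every element of $F_K$ pretends to be a product of powers of those finitely many characters, all trivial on $\P_d$. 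Without this subgroup/finite-generation argument your choice of $d$ does not go through.
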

\begin{proof}
	Suppose first that for some $j\in \{1,\ldots, \ell\}$ we have  $\lE_{n\in\bN}|f_j(n)|=0$. Then
	 whenever one of the functions $g_1, \ldots, g_m$ is equal to $f_j$ or $\overline{f_j}$,
	 all logarithmic averages  in  \eqref{E:noconstant} vanish and the identity holds trivially for $d=1$. Thus, without loss of generality, we can assume that for  $j=1,\ldots, \ell$ we have  $\lE_{n\in \bN}|f_j(n)|> 0$ (note that by our assumptions the average exists).
	 	Using Theorem~\ref{T:Tao1} for $\ell=1$, $g_1=f_j$,  and  $h_1=0$,
	we deduce that $\lE_{p\in \P}(1-|f_j(p)|)=0$.\footnote{We note that the $\ell=1$ case of Theorem~\ref{T:Tao1} admits a simple elementary proof via the Tur\`an-Kubilius inequality.} Hence, we can work under the assumption that
	\begin{equation}\label{E:norm1}
	|f_j|\sim 1, \quad \text{for  } \, j=1,\ldots, \ell.
	\end{equation}

	Next,   for $k\in \N$ and $j=1,\ldots, \ell$, we denote  by $f_j^{-k}$ the function $\overline{f_j^k}$ and  let $K=K_{f_1,\ldots, f_\ell}$ be the subset of $\Z^\ell$ defined as follows
	$$
	K:=\big\{(k_1,\ldots, k_\ell)\in \Z^\ell\colon \prod_{j=1}^\ell f_j^{k_j}\sim \chi \text{ for some Dirichlet character } \chi \big\}.
	$$
	Using \eqref{E:norm1} and  properties $\text{(i)}$-$\text{(iii)}$ of Lemma~\ref{L:sim}, and since products and complex conjugates of Dirichlet characters are Dirichlet characters,  we get that  $K$ is a subgroup of $\Z^\ell$. Since every subgroup of $\Z^\ell$ is finitely generated,  $K$  is finitely generated.
	We  let
	$F_K=F_{K,f_1,\ldots, f_\ell}$  be the following set  of multiplicative functions
	$$
	F_K:=\big\{\prod_{j=1}^\ell f_j^{k_j}\colon  (k_1,\ldots, k_\ell)\in K\big\}.
	$$
	We have that  $F_K$ is  finitely generated under multiplication. Let    $\{f_{0,1},\ldots, f_{0,r}\}$, for some $r\in\N$,   %%$r\leq \ell$
	be a set of  generators for $F_K$. Then for $j=1,\ldots, r$ there exist Dirichlet characters $\chi_j$ such that $f_{0,j}\sim \chi_j$.
	If $d\in \N$ is a common period of all these Dirichlet characters,  then for $j=1,\ldots,r$ we have  $\chi_j(dn+1)=1$ for every $n\in \N$.
	Let  $f\in F_K$, then  $f=\prod_{j=1}^rf_{0,j}^{k_j}$ for some $k_1,\ldots, k_r\in \Z$. Since $f_{0,j}\sim \chi_j$  for $j=1,\ldots, r,$  we get from property $\text{(iii)}$  of Lemma~\ref{L:sim} that  $f\sim \prod_{j=1}^r\chi_j^{k_j}$, and since  $\chi_j(p)=1$ for all $j\in \{1,\ldots, r\}$  and all $p\in \P_d$, we deduce from property $\text{(iv)}$
	of Lemma~\ref{L:sim} that $\lE_{p\in\P_d}|f(p)-1|=0$ (we also used that $\P_d$ has positive relative density in $\P$).
	Hence,
	\begin{equation}\label{E:Pd}
	\lE_{p\in\P_d}|f(p)-1|=0, \quad \text{ for every }  f\in F_K.
	\end{equation}

	We  now  show that  \eqref{E:noconstant} holds. Let $\wt g:=\prod_{j=1}^mg_j$. Since $g_j\in \{f_1,\ldots, f_\ell, \overline{f_1},\ldots, \overline{f_\ell}\}$ for $j=1,\ldots, r$,
 using \eqref{E:norm1} and  properties $\text{(ii)}$ and $\text{(iii)}$ of Lemma~\ref{L:sim},
	we get that  $\wt g\sim\prod_{j=1}^\ell f_j^{k_j}$ for some $k_1,\ldots, k_\ell\in \Z$, where we continue to use the notation  $f^k$  for  $\overline{f^{-k}}$ if $k$ is a negative integer.
	We consider two cases.  If  $\wt g \notin F_K$, then $\wt g\nsim \chi$ for all Dirichlet characters $\chi$, in which case   \eqref{E:noconstant} holds  because by Theorem~\ref{T:TT} we have $ \lE_{n\in \bN}\prod_{j=1}^{m} g_j(n+h_j)=0$ for all $h_1,\ldots, h_{m}\in \Z$. On the other hand, if  $\wt g\in F_K$, we see that  \eqref{E:noconstant} holds by combining  Theorem~\ref{T:Tao1} (with $\P_d$ in place of $\P$) and \eqref{E:Pd}.
	This completes the proof.
\end{proof}

We are going to use the following consequence of Proposition~\ref{P:constantone} which is better suited for our purposes:

\begin{corollary}\label{C:constantone}
	Let $f_1,\ldots, f_\ell\colon \Z\to\mathbb{U}$ be multiplicative functions. There exists $d\in \N$ such that the following holds: If    $f_1,\ldots, f_\ell$
	admit log-correlations on the sequence of intervals $\bN$, then    the limit on the right hand side below exists and  we have
	\begin{equation}\label{E:noconstant'}
	\lE_{n\in \bN}\,  \prod_{j=1}^m g_j(n+h_j)=
	\E_{p\in \P_d}  \lE_{n\in \bN}\,  \prod_{j=1}^{m}g_j(n+ph_j)
	\end{equation}
	for all $m \in \N$, all  $h_1, \ldots, h_{m} \in \Z$, and all $g_1,\ldots,g_{m}\in \{f_1,\ldots, f_\ell, \overline{f_1},\ldots, \overline{f_\ell}\}$.
\end{corollary}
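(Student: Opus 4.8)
The plan is to derive Corollary~\ref{C:constantone} from Proposition~\ref{P:constantone} together with the convergence result Proposition~\ref{P:conv}. The key observation is that the operator $\E^*_{p\in\P_d}$ appearing in \eqref{E:noconstant} is an iterated averaging functional (first average over dyadic blocks $[2^m,2^{m+1})$, then Ces\`aro-average in $m$), so vanishing of $\E^*_{p\in\P_d}|\,\cdot\,|$ of a bounded quantity forces vanishing ``in density'' but does not by itself pin down the individual Ces\`aro limit $\E_{p\in\P_d}$ of the unsigned quantity $\lE_{n\in\bN}\prod_j g_j(n+ph_j)$. That is why we need the independent input from Proposition~\ref{P:conv}: applied with the sequences $a_1,\dots,a_\ell=f_1,\dots,f_\ell$, it guarantees that for each $d$ the limit $\E_{p\in\P_d}\bigl(\lE_{n\in\bN}\prod_{j=1}^m g_j(n+ph_j)\bigr)$ actually exists. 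Once both the $\E_{p\in\P_d}$-limit exists and the $\E^*_{p\in\P_d}$ of the absolute difference is zero, the two must agree, which is exactly \eqref{E:noconstant'}.

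Concretely, first I would fix $d\in\N$ as produced by Proposition~\ref{P:constantone} (for this particular tuple $f_1,\dots,f_\ell$), and fix $m$, $h_1,\dots,h_m$, and $g_1,\dots,g_m$. Write $c(p):=\lE_{n\in\bN}\prod_{j=1}^m g_j(n+ph_j)$ for $p\in\P_d$, together with $c:=\lE_{n\in\bN}\prod_{j=1}^m g_j(n+h_j)$ (note this last quantity is the $p=1$ specialization and is constant in $p$; its existence is part of the hypothesis that $f_1,\dots,f_\ell$ admit log-correlations on $\bN$). Proposition~\ref{P:conv} tells us $\E_{p\in\P_d}c(p)$ exists; since constants are trivially averaged, $\E_{p\in\P_d}(c-c(p))=c-\E_{p\in\P_d}c(p)$ exists. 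Proposition~\ref{P:constantone} gives $\E^*_{p\in\P_d}|c-c(p)|=0$. The remaining point is the elementary comparison lemma: if a bounded sequence $u\colon\P_d\to\C$ has a Ces\`aro limit $\E_{p\in\P_d}u(p)$ and moreover $\E^*_{p\in\P_d}|u(p)|=0$, then $\E_{p\in\P_d}u(p)=0$. This follows because $|\E_{p\in\P_d\cap[N]}u(p)|\le \E_{p\in\P_d\cap[N]}|u(p)|$, and the dyadic-block-then-Ces\`aro average $\E^*$ of the nonnegative bounded quantity $|u|$ dominates (up to the usual comparison between $\E$ and $\E^*$ for nonnegative bounded sequences, or simply because $\E^*_{p\in\P_d}|u(p)|=0$ forces $\E_{p\in\P_d}|u(p)|=0$ when the latter exists, and in any case a subsequence of the dyadic block averages of $|u|$ tends to $0$, which suffices to conclude the Ces\`aro limit of $u$ is $0$). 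Applying this with $u(p)=c-c(p)$ yields $c=\E_{p\in\P_d}c(p)$, which is \eqref{E:noconstant'}.

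The main obstacle — really the only non-bookkeeping point — is the interplay between the two averaging functionals $\E_{p\in\P_d}$ (plain Ces\`aro over primes in the congruence class) and $\E^*_{p\in\P_d}$ (dyadic-block average followed by logarithmic/Ces\`aro average in the block index). One has to be slightly careful that $\E^*_{p\in A}|a(p)|=0$ together with the existence of $\E_{p\in A}a(p)$ really does force $\E_{p\in A}a(p)=0$; this is where one invokes the already-recorded fact (used in the proof of Proposition~\ref{P:constantone}, e.g. the parenthetical remark after \eqref{E:Pd}) that for a nonnegative bounded sequence, $\E_{p\in A}|a(p)|=0$ implies $\E^*_{p\in A}|a(p)|=0$, and conversely reason via subsequences of dyadic block averages: $\E^*_{p\in\P_d}|u(p)|=0$ means the logarithmic average in $m$ of $\E_{p\in\P_d\cap[2^m,2^{m+1})}|u(p)|$ is $0$, hence along a subsequence of $m$ these dyadic block averages tend to $0$, and since $\E_{p\in\P_d\cap[2^{m+1}]}u(p)$ converges, comparing it with a suitable average of the block averages $\E_{p\in\P_d\cap[2^j,2^{j+1})}u(p)$ shows the limit is $0$. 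Everything else is a direct substitution of the hypotheses of Proposition~\ref{P:constantone} and Proposition~\ref{P:conv}.
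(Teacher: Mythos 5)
Your proposal is correct and follows essentially the same route as the paper: the paper's proof likewise combines Proposition~\ref{P:constantone} with the existence of the limit $\E_{p\in \P_d}$ guaranteed by Proposition~\ref{P:conv}, and then uses that for sequences indexed by $\P_d$ the existence of the plain Ces\`aro limit forces the $\E^*_{p\in\P_d}$ average to exist and coincide with it. Your subsequence argument for reconciling the two averaging functionals is just a slightly more explicit unpacking of that last comparison (which ultimately rests on consecutive dyadic blocks containing comparable proportions of $\P_d$), so there is no substantive difference.
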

\begin{proof}
This follows immediately from Proposition~\ref{P:constantone} and the fact that by Proposition~\ref{P:conv} the limit $\E_{p\in \P_d}$ on the right hand side exists and as a consequence the limit $\E^*_{p\in \P_d}$ is equal to  the limit $\E_{p\in \P_d}$.
\end{proof}

\subsection{An ergodic consequence}
Using Proposition~\ref{P:correspondence} we deduce from  Corollary~\ref{C:constantone} the following ergodic result concerning joint Furstenberg systems of multiplicative functions:
\begin{theorem}\label{T:Tao2}
	Let $f_1,\ldots, f_\ell\colon \Z\to \mathbb{U}$  be multiplicative functions. There exists $d\in \N$ such that the following holds: If
	$(X,\mu,T)$ is a joint Furstenberg system  of $f_1,\ldots, f_\ell$, and if    $F_{0,1}, \ldots, F_{0,\ell}$ are as in Proposition~\ref{P:correspondence}, then we have
	\begin{equation}
	\label{eq:Furstenberg-Tao}
	\int_X \prod_{j=1}^{m} T^{h_j} F_j \, d\mu=   \E_{p\in \P_d}  \int_X \prod_{j=1}^{m} T^{ph_j}F_j\, d\mu
	\end{equation}
	for all $m \in \N$, all  $h_1,\ldots, h_{m}\in \Z$, and all
	$F_1,\ldots, F_{m}\in \{F_{0,1},\ldots F_{0,\ell}, \ \overline{F_{0,1}},\ldots, \overline{F_{0,\ell}}
	\}$.
\end{theorem}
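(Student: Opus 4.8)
The plan is to deduce Theorem~\ref{T:Tao2} directly from Corollary~\ref{C:constantone} by translating every logarithmic average occurring there into an integral over $X$ via the correspondence principle (Proposition~\ref{P:correspondence}). First I would fix $d\in\N$ to be the integer supplied by Corollary~\ref{C:constantone} for the tuple $f_1,\ldots,f_\ell$. Then I would let $\bN$ be a sequence of intervals on which $f_1,\ldots,f_\ell$ admit log-correlations, and take $(X,\mu,T)$ together with the functions $F_{0,1},\ldots,F_{0,\ell}$ to be as in Proposition~\ref{P:correspondence}, so that \eqref{E:correspondence} holds.

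Given $m\in\N$, shifts $h_1,\ldots,h_m\in\Z$, and functions $F_1,\ldots,F_m\in\{F_{0,1},\ldots,F_{0,\ell},\overline{F_{0,1}},\ldots,\overline{F_{0,\ell}}\}$, the next step is to associate to each $F_j$ the sequence $g_j\in\{f_1,\ldots,f_\ell,\overline{f_1},\ldots,\overline{f_\ell}\}$ matched to it exactly as in the statement of Proposition~\ref{P:correspondence} (that is, $g_j=f_k$ when $F_j=F_{0,k}$ and $g_j=\overline{f_k}$ when $F_j=\overline{F_{0,k}}$). Invoking \eqref{E:correspondence} with the shifts $h_1,\ldots,h_m$ gives
$$\int_X\prod_{j=1}^m T^{h_j}F_j\, d\mu=\lE_{n\in\bN}\prod_{j=1}^m g_j(n+h_j),$$
while invoking the very same identity with the shifts $ph_1,\ldots,ph_m$ (permissible since \eqref{E:correspondence} is valid for arbitrary integer shifts) gives, for each prime $p$,
$$\int_X\prod_{j=1}^m T^{ph_j}F_j\, d\mu=\lE_{n\in\bN}\prod_{j=1}^m g_j(n+ph_j).$$

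Finally, Corollary~\ref{C:constantone} asserts precisely that $\lE_{n\in\bN}\prod_{j=1}^m g_j(n+h_j)=\E_{p\in\P_d}\lE_{n\in\bN}\prod_{j=1}^m g_j(n+ph_j)$ and that the average over $p\in\P_d$ on the right exists; plugging the two displayed identities into this relation yields \eqref{eq:Furstenberg-Tao} and at the same time shows that $\E_{p\in\P_d}\int_X\prod_{j=1}^m T^{ph_j}F_j\, d\mu$ exists. (The existence of this $\P_d$-average can alternatively be read off directly from Proposition~\ref{P:conv} applied to $a_1=f_1,\ldots,a_\ell=f_\ell$.) There is no genuine analytic difficulty here: no interchange of a limit with an integral is needed, since we are merely averaging, over $p$, a convergent sequence of integrals. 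The only point I would write out with care—and the closest thing to an obstacle—is the bookkeeping that keeps the pairing of each $F_j$ with its associated sequence $g_j$ consistent across the several applications of Proposition~\ref{P:correspondence}, so that the same tuple $(g_1,\ldots,g_m)$ appears on both sides of the identity of Corollary~\ref{C:constantone}.
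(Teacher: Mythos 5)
Your proposal is correct and follows essentially the same route as the paper, which deduces Theorem~\ref{T:Tao2} precisely by combining Corollary~\ref{C:constantone} with the identity \eqref{E:correspondence} of Proposition~\ref{P:correspondence} applied to the shifts $h_j$ and $ph_j$. The bookkeeping you highlight (pairing each $F_j$ with its sequence $g_j$ consistently) is exactly the only content of the deduction, which is why the paper states the theorem as an immediate consequence without a separate written proof.
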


\section{The structure of Furstenberg systems of  multiplicative functions}
The goal of this section is to prove Theorem~\ref{T:structure}. In the next section we use this structural result to prove Theorems~\ref{T:Sarnak1}-\ref{T:Sarnak2}.

\subsection{Proof of Theorem~\ref{T:structure}}
Given a system $(X,\mu,T)$ and $d\in \N$ we define the system of arithmetic progressions with steps in $\P_d$ as follows:
\begin{definition}\label{D:tilde}
	Let $(X, \mu,T)$ be a system  and let $X^\Z$ be endowed with the product $\sigma$-algebra.	For $d\in \N$ we write $\wt\mu_d$ for the probability  measure on $X^\Z$ defined as follows:
	For every $m\in\N$ and all $F_{-m},\ldots,F_m\in L^\infty(\mu)$, we let
	\begin{equation}
	\label{E:mud}
	\int_{X^\Z}\prod_{j=-m}^m F_j(x_j)\,d\wt\mu_d(\ux):=		
	\E_{p\in\P_d}\int_X\prod_{j=-m}^m T^{pj}F_j\,d\mu,
	\end{equation}
	where  $\ux:=(x_j)_{j\in\Z}$ and the limit on the right hand side  exists by Theorem~\ref{T:FHK}.
	The measure $\wt\mu_d$ is invariant under
	the shift transformation $S$ on $X^\Z$ and induces a system
	  $(X^\Z, \wt\mu_d,S)$, which we call the \emph{system of arithmetic progressions with steps in $\P_d$} associated with the system  $(X,\mu,T)$.
\end{definition}
\begin{remark}
 For $d=1$ the system $(X^\Z, \wt\mu_1,S)$ coincides with the system of arithmetic progressions with prime steps introduced in \cite[Definition~3.8]{FH18}.
	\end{remark}
The relevance of  the systems $(X^\Z, \wt\mu_d,S)$ to our problem is demonstrated  by  the following result:
\begin{proposition}
	\label{P:factor}
	Let $f_1,\ldots, f_\ell\colon \Z\to \mathbb{U}$ be multiplicative functions. Then there exists $d\in \N$ such that 	any joint  Furstenberg system $(X,\mu,T)$ of the  multiplicative functions $f_1,\ldots, f_\ell$ 	
	is a  factor of the system  $(X^\Z,\wt\mu_d,S)$.
\end{proposition}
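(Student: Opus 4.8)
The plan is to exhibit an explicit factor map from the system $(X^\Z,\wt\mu_d,S)$ onto the joint Furstenberg system $(X,\mu,T)$, where $d$ is the integer supplied by Theorem~\ref{T:Tao2} (equivalently, by Corollary~\ref{C:constantone}). We may use the concrete model from the remark after Proposition~\ref{P:correspondence}: take $X=(\mathbb{U}^\ell)^\Z$, let $T$ be the shift, and let $F_{0,1},\dots,F_{0,\ell}$ be the coordinate functions from \eqref{E:F0}, which are $T$-generating. The natural candidate for the factor map is the ``diagonal at time zero'' map $\pi\colon X^\Z\to X$ sending $\ux=(x_j)_{j\in\Z}$ to $x_0$; it manifestly intertwines $S$ and $T$ since under $\wt\mu_d$ the coordinate $S^h\ux$ has zeroth entry $x_h$, while applying $T^h$ to $\pi(\ux)=x_0$ shifts the internal $\Z$-index of $x_0\in(\mathbb{U}^\ell)^\Z$. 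So the real content is to check that $\pi$ pushes $\wt\mu_d$ forward to $\mu$.

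First I would reduce the pushforward identity to equality of a generating family of integrals. Because $F_{0,1},\dots,F_{0,\ell}$ are $T$-generating, the $T$-invariant algebra they (together with their conjugates) generate is uniformly dense in $C(X)$, so $\mu$ is determined by the numbers $\int_X \prod_{j=1}^m T^{h_j}F_j\,d\mu$ with $F_j\in\{F_{0,k},\overline{F_{0,k}}\}$; likewise the pushforward $\pi_*\wt\mu_d$ is determined by the same integrals against it. Thus it suffices to prove, for all $m$, all $h_1,\dots,h_m\in\Z$, and all such $F_j$,
\begin{equation*}
\int_{X^\Z}\prod_{j=1}^m (T^{h_j}F_j)(x_0)\,d\wt\mu_d(\ux)=\int_X\prod_{j=1}^m T^{h_j}F_j\,d\mu.
\end{equation*}
Here the key observation is that $(T^{h}F_{0,k})(x_0)$, as a function of $\ux$, depends only on the internal coordinates of $x_0$, i.e. it is the function $x\mapsto T^h F_{0,k}$ on $X$ composed with the zeroth projection — but to connect it to the defining formula \eqref{E:mud} of $\wt\mu_d$ I want to re-express $T^hF_{0,k}$ on $X$ in terms of the $j$-indexed coordinate functions on $X^\Z$. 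Concretely, writing out \eqref{E:mud} is done with functions $F_j(x_j)$ evaluated at distinct coordinates $x_j$; the left side of my display instead has all factors evaluated at the single coordinate $x_0$. The bridge is exactly the ergodic identity \eqref{eq:Furstenberg-Tao} of Theorem~\ref{T:Tao2}.

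Here is how I would finish. Expand the left-hand side using \eqref{E:mud}: approximating each $T^{h_j}F_j$ on $X$ by a finite sum of functions of finitely many coordinates, one reduces to the case where each factor, as a function of $\ux\in X^\Z$ evaluated at $x_0$, can be written so that \eqref{E:mud} applies; this gives the left-hand side equal to $\E_{p\in\P_d}\int_X\prod_{j=1}^m T^{ph_j}F_j\,d\mu$ (the shift by $pj$ in \eqref{E:mud} acting on the $p\cdot 0$... — more carefully, one should apply \eqref{E:mud} with the collection $\{h_j\}$ of indices in place of $\{-m,\dots,m\}$, picking up $T^{ph_j}$; the existence of the $\P_d$-limit is guaranteed by Theorem~\ref{T:FHK}). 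By Theorem~\ref{T:Tao2}, $\E_{p\in\P_d}\int_X\prod_{j=1}^m T^{ph_j}F_j\,d\mu=\int_X\prod_{j=1}^m T^{h_j}F_j\,d\mu$, which is exactly the right-hand side. Hence $\pi_*\wt\mu_d=\mu$ and $\pi$ is a factor map.

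I expect the main obstacle to be purely bookkeeping: getting the indexing in \eqref{E:mud} to line up with the shifts $h_j$ appearing in the target correlation, and justifying the approximation step that lets one replace the continuous generating functions $T^{h_j}F_j$ by cylinder functions on $X^\Z$ so that \eqref{E:mud} is literally applicable (this is where one uses that $F_{0,k}$ depend on a single coordinate, so $T^{h}F_{0,k}$ depends only on coordinate $h$, making the cylinder structure transparent). Once the dictionary between the two descriptions of $\wt\mu_d$ is set up, the proof is a one-line invocation of Theorem~\ref{T:Tao2}. No genuinely hard analysis is needed here — the hard input (the Tao and Tao--Ter\"av\"ainen results) has already been absorbed into Corollary~\ref{C:constantone} and Theorem~\ref{T:Tao2}.
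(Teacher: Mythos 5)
Your overall strategy is the paper's: exhibit an explicit map $\pi\colon X^\Z\to X$, reduce $\pi_*\wt\mu_d=\mu$ to a generating family of correlation integrals via the $T$-generating property, and close the loop with the ergodic identity \eqref{eq:Furstenberg-Tao} of Theorem~\ref{T:Tao2}. But the map you wrote down is wrong, and this is not mere bookkeeping. You define $\pi(\ux)=x_0$, the projection onto the zeroth coordinate of $\ux=(x_j)_{j\in\Z}$. This map is not equivariant: $\pi(S\ux)=x_1$ while $T\pi(\ux)=Tx_0$ (the internal shift of $x_0\in(\mathbb{U}^\ell)^\Z$), and there is no reason for $x_1=Tx_0$ to hold $\wt\mu_d$-a.e. --- indeed $\int F(x_1)\,\overline{F(Tx_0)}\,d\wt\mu_d=\E_{p\in\P_d}\int_X T^pF\cdot\overline{TF}\,d\mu$, which in general differs from $\int_X|F|^2\,d\mu$, so $x_1\neq Tx_0$ on a set of positive measure. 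Your one-sentence justification of equivariance actually records exactly this mismatch and then asserts it away. Moreover, your subsequent computation is inconsistent with your own map: if every factor $(T^{h_j}F_j)$ is evaluated at the single coordinate $x_0$, then \eqref{E:mud} applies with only the index $j=0$ and yields $\int_X\prod_j T^{h_j}F_j\,d\mu$ directly (the dilation $p\cdot 0=0$ contributes nothing), not the expression $\E_{p\in\P_d}\int_X\prod_j T^{ph_j}F_j\,d\mu$ that you claim and that Theorem~\ref{T:Tao2} is needed for. So as written the pushforward identity is trivially true but the map is not a factor map.

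The fix is the map the paper uses: define $\pi$ by reading off the zeroth \emph{internal} entry of each coordinate, $(\pi(\ux))(n):=(x_{n,1}(0),\ldots,x_{n,\ell}(0))=(F_{0,1}(x_n),\ldots,F_{0,\ell}(x_n))$ for $n\in\Z$. Then $\pi\circ S=T\circ\pi$ holds identically (shifting the outer index $n$ is shifting the assembled sequence), and the pullback of $\prod_{j=1}^m T^{h_j}F_j$ is a product of functions of the \emph{distinct} coordinates $x_{h_1},\ldots,x_{h_m}$, so \eqref{E:mud} genuinely produces $\E_{p\in\P_d}\int_X\prod_{j=1}^m T^{ph_j}F_j\,d\mu$, and Theorem~\ref{T:Tao2} identifies this with $\int_X\prod_{j=1}^m T^{h_j}F_j\,d\mu$. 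With that correction your argument becomes the paper's proof; note also that no approximation by cylinder functions is needed, since each $F_{0,k}$ already depends on a single internal coordinate.
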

\begin{proof}
	We  can assume that the joint  Furstenberg system is defined on the space   $X:=(\mathbb{U}^\ell)^\Z$ and  $T$ is the shift transformation on $X$. We denote elements of $X$ by $x=(x_1(k), \ldots, x_\ell(k))_{k\in\Z}$, where $x_1(k), \ldots, x_\ell(k)\in \mathbb{U}$ for $k\in \Z$, and  elements of $X^\Z$ with  $\ux=(x_n)_{n\in\Z}$, where $x_n\in X$ for $n\in \Z$. Hence, $\ux=(x_{n,1}(k), \ldots, x_{n,\ell}(k))_{k,n \in \Z}$ can be identified with $(x_{n,1},\ldots, x_{n,\ell})_{n\in \Z}$, where $x_{n,j}=(x_{n,j}(k))_{k\in\Z}$ for $j=1,\ldots, \ell$.
	
	We define the map $\pi\colon X^\Z\to X$  as follows:
	For
	$\ux=(x_{n, 1},\ldots, x_{n,\ell})_{n\in\Z}\in X^\Z$ let
	$$
	(\pi(\ux))(n):=( x_{n,1}(0),\ldots,  x_{n, \ell}(0))=(F_{0,1}(x_{n,1}),\ldots, F_{0,\ell}(x_{n, \ell})),  \quad n\in\Z,
	$$	
	where	
	$$
	F_{h, j}(x):=x_j(h),\quad  x\in X,\ h\in \Z,\  j\in \{1,\ldots, \ell\}.
	$$

	For $n\in\Z$ we  have
	\begin{multline*}
	(\pi(S\ux))(n)=(F_{0,1}((S\ux)_n), \ldots,F_{0,\ell}((S\ux)_n)) =(F_{0,1}(x_{n+1, 1}), \ldots,F_{0,\ell}(x_{n+1, \ell}))=\\ (\pi(\ux))(n+1)=(T\pi(\ux))(n).
	\end{multline*}
	Thus
	$$
	\pi\circ S=T\circ\pi.
	$$
	
	Next, we claim that  $\wt\mu_d\circ \pi^{-1}=\mu$.
	Indeed, for all $m \in\N$, all  $h_1,\dots,h_m\in\Z$, and all $k_1,\ldots, k_m\in \{\pm 1,\ldots, \pm\ell\}$, using   identity~\eqref{eq:Furstenberg-Tao} in Theorem~\ref{T:Tao2} and the definition of $\wt\mu_d$ given in~\eqref{E:mud}, we have
	\begin{multline*}
	\int_X\prod_{j=1}^m F_{h_j, k_j}(x)\, d\mu(x)=
	\int_X\prod_{j=1}^m F_{0, k_j}(T^{h_j}x)\,d\mu(x)
	=
	\E_{p\in\P_d} \int_X\prod_{j=1}^\ell F_{0, k_j}(T^{ph_j}x)\,d\mu(x)
	=\\
	\int_{X^\Z}\prod_{j=1}^m F_{0,k_j}(x_{h_j})\,d\wt\mu_d(\ux)
	=\int_{X^\Z}\prod_{j=1}^m (F_{h_j, k_j}\circ\pi)(\ux)\,d\wt\mu_d(\ux),
	\end{multline*}
	where we let $F_{h,-k}:=\overline{F_{h,k}}$ for $h\in \Z$ and $k\in \{1, \ldots, \ell\}$.
	Since the algebra generated by  the functions $F_{h,1}, \ldots,  F_{h,\ell}, \overline{F_{h,1}}, \ldots,  \overline{F_{h,\ell}}$, $h\in \Z$,  is dense in $C(X)$ with the uniform topology, the claim follows.
	
	Therefore, $\pi\colon (X^\Z,\wt\mu_d,S)\to (X,\mu,T)$ is a factor map and the proof is complete.
\end{proof}

Our next task is  to obtain structural results for  the systems $(X^\Z,\wt\mu_d,S)$. This crucially
depends on  the following result from \cite{FH18} which deals with the case where $d=1$:
\begin{theorem}\label{T:FH}
	Let $(X,\mu,T)$ be a system. Then the system $(X^\Z,\wt\mu_1,S)$
	\begin{enumerate}
		\item
		has no irrational spectrum;
		
		\item  has ergodic components
		isomorphic to   direct products of infinite-step nilsystems and  Bernoulli systems.	
	\end{enumerate}
\end{theorem}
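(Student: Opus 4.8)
The statement to prove is Theorem~\ref{T:FH}, which asserts that for any system $(X,\mu,T)$, the system of arithmetic progressions with prime steps $(X^\Z,\wt\mu_1,S)$ has no irrational spectrum and has ergodic components isomorphic to direct products of infinite-step nilsystems and Bernoulli systems. Since this is quoted as ``the following result from \cite{FH18}'', the plan is to reconstruct the argument from that paper. The core object is the measure $\wt\mu_1$ on $X^\Z$ whose finite-dimensional marginals are given by $\int \prod_{j=-m}^m F_j(x_j)\,d\wt\mu_1 = \E_{p\in\P}\int_X \prod_{j=-m}^m T^{pj}F_j\,d\mu$, the limit existing by Theorem~\ref{T:FHK}. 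The first step is to understand the ergodic decomposition of $\wt\mu_1$: one disintegrates $\mu = \int \mu_\omega\,d\nu(\omega)$ into ergodic components, and since the averaging over primes $\E_{p\in\P}T^{pj}$ respects this decomposition (prime averages of $T$-invariant functions behave as they do under the ordinary Cesàro average for uniquely ergodic pieces, and more generally one uses that the limit in Theorem~\ref{T:FHK} is compatible with ergodic decomposition), $\wt\mu_1$ decomposes as an integral of the measures $\wt{(\mu_\omega)}_1$. Thus it suffices to prove the structural statement when $(X,\mu,T)$ is itself ergodic.

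Assuming $(X,\mu,T)$ ergodic, the next step is to invoke the structure theory of Host--Kra / Ziegler: the Conze--Lesigne style factors. The key input is that averages of the form $\E_{p\in\P}\prod_j T^{pj}F_j$ are controlled by the Host--Kra nilfactors $Z_k(X)$ — this is precisely the content of the Wierdl--Wooley--Zhou / Frantzikinakis--Host--Kra results (Theorem~\ref{T:FHK} and its quantitative refinements): the limiting behaviour of prime-step polynomial averages only sees the inverse limit $Z_\infty(X)$ of finite-step nilsystem factors, together with the ``Bernoulli-like'' complementary part. Concretely, one shows that the factor of $(X^\Z,\wt\mu_1,S)$ generated by the diagonal (the ``small'' coordinates) is measurably isomorphic to a product $Z_\infty(X) \times B$, where $B$ is a Bernoulli shift arising from the independence of the tails. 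The no-irrational-spectrum claim should follow from this: eigenfunctions of $(X^\Z,\wt\mu_1,S)$ pull back to eigenfunctions supported on the nilfactor part, and the relevant eigenvalues come from the $S$-action, which acts on the prime-indexed structure — the point being that prime shifts equidistribute modulo any irrational rotation (Vinogradov / Green--Tao), killing irrational eigenvalues, while rational eigenvalues survive (reflecting the congruence structure $\P_1=\P$, and in the general $\P_d$ case the residue $d\N+1$). So the Kronecker factor of $\wt\mu_1$ is procyclic, hence ``no irrational spectrum''.

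Putting it together, the order of steps I would carry out is: (1) reduce to $(X,\mu,T)$ ergodic via the ergodic decomposition, checking that $\wt\mu_1$ decomposes accordingly — here one must verify that the prime-averaging operation commutes with ergodic disintegration, which uses the $L^2$-convergence of Theorem~\ref{T:FHK} together with a dominated-convergence / Fubini argument. (2) For ergodic $(X,\mu,T)$, identify the factor of $(X^\Z,\wt\mu_1,S)$ generated by all coordinates: show that modulo the Host--Kra nilfactor $Z_\infty(X)$, the coordinates become asymptotically independent, producing a Bernoulli complement; this is where one imports the machinery of \cite{FH18} on systems of arithmetic progressions, itself resting on the Host--Kra structure theorem and the Gowers-norm control of prime averages. (3) Compute the Kronecker factor and show it has only rational spectrum, using equidistribution of primes along irrational rotations. (4) Conclude by assembling (1)--(3) into the two claimed properties.

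The main obstacle I expect is step (2): establishing that, over the Host--Kra nilfactor, the $X^\Z$-coordinates along an arithmetic progression with prime steps become independent, i.e. the relatively-Bernoulli structure. This is the genuinely hard analytic/ergodic content and is exactly what \cite{FH18} proves with considerable effort — it requires both the full inverse-limit structure theorem for characteristic factors of polynomial prime averages and a careful analysis showing that the ``non-nil'' part of the system is not merely mixing but has independent increments along prime gaps. The no-irrational-spectrum part (step 3) and the ergodic-decomposition reduction (step 1) are comparatively routine, the latter being a soft measure-theoretic argument and the former following from standard equidistribution. Since the statement is explicitly cited as already proved in \cite{FH18}, in the present paper one simply records it and refers there for the proof; the sketch above indicates how that proof runs.
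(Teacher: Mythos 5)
The paper does not prove Theorem~\ref{T:FH} at all: it imports the statement verbatim from \cite{FH18} as a black box, accompanied only by a remark listing the ingredients of the proof given there (the Host--Kra/Ziegler characteristic factors, equidistribution of prime-step progressions on nilmanifolds, partially strongly stationary systems, and the Gowers uniformity of the $W$-tricked von Mangoldt function). Your conclusion that one simply records the theorem with a citation is exactly what the paper does, and your sketch of how the \cite{FH18} argument runs (reduction to the ergodic case, relative independence of the coordinates over the infinite-step nilfactor, and Vinogradov-type equidistribution to exclude irrational eigenvalues) is consistent with the ingredients the paper's remark attributes to that proof.
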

\begin{remark} The   infinite-step nilsystems and the Bernoulli systems are allowed to be trivial.
\end{remark}

The proof of Theorem~\ref{T:FH} uses some deep ergodic machinery such as the main result from \cite{HK} (or \cite{Zi07}) regarding characteristic factors of Furstenberg averages, results about arithmetic progressions on nilmanifolds, and properties of partially  strongly stationary systems.
It  also uses indirectly (via the use of variants of limit formulas obtained in \cite{FHK}) some deep number theoretic input
such as the Gowers uniformity of the $W$-tricked von Mangoldt function from   \cite{GT09b, GT7, GTZ12}.
Luckily,  we do not have to modify the argument from \cite{FH18} in order to get a similar result for the measures $\wt\mu_d$; instead, we make use of the following simple observation, which allows us to use Theorem~\ref{T:FH} as a ``black box'':
\begin{lemma}\label{L:abscont}
Let $(X,\mu,T)$ be a system  and  $\wt\mu_d$, $d\in \N$, be the measures on $X^\Z$ defined by \eqref{E:mud}.
Then $\wt\mu_d\leq \phi(d) \, \wt\mu_1$ for every $d\in \N$.
\end{lemma}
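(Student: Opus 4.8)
The plan is to compare the defining formula \eqref{E:mud} for $\wt\mu_d$ with the one for $\wt\mu_1$ on cylinder sets, after decomposing the set of primes according to residue classes. The key point is that for a fixed $d$, the set $\P_d = \P\cap(d\N+1)$ is one of the $\phi(d)$ residue classes modulo $d$ that can contain infinitely many primes (namely the classes $a\bmod d$ with $(a,d)=1$), and by Dirichlet's theorem each such class has relative density $1/\phi(d)$ inside $\P$. First I would fix $m\in\N$ and nonnegative functions $F_{-m},\ldots,F_m\in L^\infty(\mu)$ with $0\le F_j\le 1$; by a standard approximation and linearity argument it suffices to check the inequality $\int \prod_j F_j(x_j)\,d\wt\mu_d(\ux)\le \phi(d)\int \prod_j F_j(x_j)\,d\wt\mu_1(\ux)$ on such tuples, and then pass to cylinder indicator functions and use a monotone class / Dynkin argument to conclude $\wt\mu_d\le\phi(d)\,\wt\mu_1$ as measures on $X^\Z$.

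Next I would write out both sides using \eqref{E:mud}. The right side is $\phi(d)\,\E_{p\in\P}\int_X\prod_{j=-m}^m T^{pj}F_j\,d\mu$, and since each integrand $\int_X\prod_{j=-m}^m T^{pj}F_j\,d\mu$ is nonnegative (the $F_j$ being nonnegative and $T$ measure preserving), I can bound the average over all primes from below by the average restricted to the subclass $\P_d$, weighted by the density of that subclass: concretely,
\[
\E_{p\in\P}\int_X\prod_{j=-m}^m T^{pj}F_j\,d\mu
\ \ge\
\lim_{N\to\infty}\frac{1}{\pi(N)}\sum_{p\in\P_d\cap[N]}\int_X\prod_{j=-m}^m T^{pj}F_j\,d\mu
\ =\ \frac{1}{\phi(d)}\,\E_{p\in\P_d}\int_X\prod_{j=-m}^m T^{pj}F_j\,d\mu,
\]
where the last equality uses that $\P_d$ has relative density $1/\phi(d)$ in $\P$ (Dirichlet) together with the existence of the limit $\E_{p\in\P_d}$ supplied by Theorem~\ref{T:FHK}. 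Multiplying by $\phi(d)$ gives exactly $\phi(d)\int\prod_j F_j\,d\wt\mu_1 \ge \E_{p\in\P_d}\int_X\prod_{j=-m}^m T^{pj}F_j\,d\mu = \int\prod_j F_j\,d\wt\mu_d$, which is the desired inequality on cylinders.

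The only mild subtlety — and the step I would be most careful about — is the passage from the inequality on products of nonnegative $L^\infty$ functions to an inequality of measures: one wants $\wt\mu_d(C)\le\phi(d)\,\wt\mu_1(C)$ for every measurable $C\subseteq X^\Z$, and the computation above directly gives this only for cylinder sets $C=\prod_{j=-m}^m A_j$ with $A_j$ measurable in $X$ (taking $F_j=\one_{A_j}$). Since finite unions of such cylinders form an algebra generating the product $\sigma$-algebra, and since both $\wt\mu_d$ and $\phi(d)\,\wt\mu_1$ are finite measures, the inequality extends from this algebra to the full $\sigma$-algebra by a routine monotone class argument (or by outer-regularity, approximating an arbitrary $C$ from outside by countable unions of cylinders). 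No deep input beyond Theorem~\ref{T:FHK} and Dirichlet's theorem on primes in arithmetic progressions is needed; the content is genuinely just the density bookkeeping for $\P_d\subseteq\P$.
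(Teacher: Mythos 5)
Your argument is correct and is essentially identical to the paper's proof: the paper also reduces to the inequality on products of non-negative functions $F_{-m},\ldots,F_m\in L^\infty(\mu)$ and concludes from the estimate $\E_{p\in\P_d}\,a(p)\leq (d_\P(\P_d))^{-1}\,\E_{p\in\P}\,a(p)$ for non-negative sequences together with the fact that $\P_d$ has relative density $1/\phi(d)$ in $\P$. The extra care you take with the monotone class step is sound but is left implicit in the paper.
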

\begin{proof}
It suffices to show that for all $m\in\N$ and all non-negative $F_{-m},\ldots, F_m\in L^\infty(\mu)$ we have
$$
\int_{X^\Z}\prod_{j=-m}^m F_j(x_j)\,d\wt\mu_d(\ux)	\leq \phi(d)\, \int_{X^\Z}\prod_{j=-m}^m F_j(x_j)\,d\wt\mu_1(\ux).		
$$
This follows immediately from \eqref{E:mud},  the fact  that the relative density $d_\P(\P_d)$ of the set $\P_d$ in the primes is $1/\phi(d)$, and the estimate
$$
\E_{p\in\P_d}\, a(p)\leq (d_\P(\P_d))^{-1}\, \E_{p\in\P} \, a(p)
$$
which holds for all  sequences $a\colon \P\to \R^+$ assuming that the limits on the left and right hand side  exist.
\end{proof}
Combining Theorem~\ref{T:FH} and Lemma~\ref{L:abscont} we  deduce the following:
\begin{theorem}\label{T:Structure-mud}
	Let $(X,\mu,T)$ be a system. Then for every   $d\in \N$ the system $(X^\Z,\wt\mu_d,S)$
	\begin{enumerate}
		\item
		has no irrational spectrum;
		
		\item  has ergodic components
		isomorphic to   direct products of infinite-step nilsystems and  Bernoulli systems.	
	\end{enumerate}
\end{theorem}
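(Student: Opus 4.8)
The plan is to deduce Theorem~\ref{T:Structure-mud} from Theorem~\ref{T:FH} together with the absolute continuity estimate of Lemma~\ref{L:abscont}, by checking that both properties (i) and (ii) are inherited by a measure that is absolutely continuous with respect to, and invariant under the same transformation as, a measure known to possess them. So the first step is simply to invoke Lemma~\ref{L:abscont}: for fixed $d$ we have $\wt\mu_d\leq \phi(d)\,\wt\mu_1$ on $X^\Z$, and both $\wt\mu_d$ and $\wt\mu_1$ are $S$-invariant. Hence $\wt\mu_d$ is absolutely continuous with respect to $\wt\mu_1$, with a Radon--Nikodym density $\psi:=d\wt\mu_d/d\wt\mu_1\in L^\infty(\wt\mu_1)$ satisfying $0\leq \psi\leq \phi(d)$; and since both measures are $S$-invariant, $\psi$ is $S$-invariant $\wt\mu_1$-almost everywhere. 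This last point is the mechanism that makes everything work: an $S$-invariant density lets us pass structural information between the two systems.

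For property (i): suppose $(X^\Z,\wt\mu_d,S)$ had a non-trivial eigenfunction $g$ with an irrational eigenvalue $e^{2\pi i\alpha}$, $\alpha\notin\Q$. Then $g\in L^2(\wt\mu_d)\subseteq L^2(\wt\mu_1)$ (using $\wt\mu_d\leq\phi(d)\wt\mu_1$), and $Sg=e^{2\pi i\alpha}g$ as an identity in $L^2(\wt\mu_d)$. Because the density $\psi$ is $S$-invariant, the eigenfunction equation persists $\wt\mu_1$-a.e.\ on the set $\{\psi>0\}$, and extending $g$ by zero off that set gives a function $\tilde g\in L^2(\wt\mu_1)$ with $S\tilde g=e^{2\pi i\alpha}\tilde g$ in $L^2(\wt\mu_1)$; it is non-zero since $\wt\mu_d(\{g\neq 0\})>0$ forces $\wt\mu_1(\{g\neq 0\}\cap\{\psi>0\})>0$. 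This contradicts Theorem~\ref{T:FH}(i). (Alternatively, one argues via ergodic decomposition directly, as in the next paragraph, which is perhaps cleaner.)

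For property (ii): let $\wt\mu_1=\int \lambda\,d\tau(\lambda)$ be the ergodic decomposition of $\wt\mu_1$ under $S$, where $\tau$ is the corresponding measure on the space of $S$-invariant ergodic measures. Since $\psi$ is $S$-invariant, by the ergodic decomposition it is $\tau$-almost surely constant on each ergodic component $\lambda$; write $\psi=c(\lambda)$ $\lambda$-a.e. Then $\wt\mu_d=\int c(\lambda)\lambda\,d\tau(\lambda)$, and after normalizing (discarding the $\tau$-null set where $c(\lambda)=0$ and rescaling $\tau$ accordingly by $c(\lambda)\,d\tau(\lambda)/\big(\int c\,d\tau\big)$), this exhibits an ergodic decomposition of $\wt\mu_d$ whose ergodic components are $\tau$-almost all of the very same measures $\lambda$ appearing in the ergodic decomposition of $\wt\mu_1$. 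By Theorem~\ref{T:FH}(ii), $\tau$-almost every such $\lambda$ gives a system isomorphic to a direct product of an infinite-step nilsystem and a Bernoulli system; hence the same holds for $\wt\mu_d$-almost every ergodic component, which is exactly (ii). One should also note, as in the remark following Theorem~\ref{T:FH}, that the nilsystem and Bernoulli factors are allowed to be trivial, so nothing is lost.

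The only genuinely delicate point — and the step I expect to require the most care — is the assertion that the Radon--Nikodym density of one $S$-invariant measure with respect to another is itself $S$-invariant. This is a standard fact (uniqueness of the density together with $\frac{d(\wt\mu_d\circ S^{-1})}{d(\wt\mu_1\circ S^{-1})}=\psi\circ S^{-1}$ and $\wt\mu_d\circ S^{-1}=\wt\mu_d$, $\wt\mu_1\circ S^{-1}=\wt\mu_1$), but it is worth spelling out because the rest of the argument is a purely formal transfer through this invariant density. Everything else — measurability of the components, the ergodic decomposition, pushing down to ergodic components — is routine once that is in hand, and indeed this is why the lemma is described in the text as letting us use Theorem~\ref{T:FH} as a black box.
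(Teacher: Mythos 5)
Your argument follows essentially the same route as the paper's proof: Lemma~\ref{L:abscont} gives $\wt\mu_d\leq\phi(d)\,\wt\mu_1$, absolute continuity transfers property (i), and the fact that the ergodic components of $\wt\mu_d$ are ($P$-almost surely) among those of $\wt\mu_1$ transfers property (ii); the paper asserts both transfers in two sentences, while you supply the underlying mechanism (the $S$-invariance of the Radon--Nikodym density), which is indeed the point that makes them work. One small correction to your argument for (i): the inclusion $L^2(\wt\mu_d)\subseteq L^2(\wt\mu_1)$ goes the wrong way, since $\wt\mu_d\leq\phi(d)\,\wt\mu_1$ yields $L^2(\wt\mu_1)\subseteq L^2(\wt\mu_d)$, and an $L^2(\wt\mu_d)$-eigenfunction could a priori fail to be $\wt\mu_1$-square-integrable where the density $\psi$ is small. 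This is harmless: since $|g|$ is $S$-invariant you may replace $g$ by $g/|g|$ on $\{g\neq0\}$ (and $0$ elsewhere) to obtain a bounded eigenfunction with the same eigenvalue before extending by zero off $\{\psi>0\}$, or simply run your ergodic-decomposition argument, which handles (i) and (ii) simultaneously and matches what the paper does.
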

\begin{proof}
	By Lemma~\ref{L:abscont}  we have $\wt\mu_d\leq \phi(d)\, \wt\mu_1$, hence the measure $\wt\mu_d$ is absolutely continuous with respect to the measure $\wt\mu_1$. This implies that the spectrum of the system   $(X^\Z,\wt\mu_d,S)$ is a subset of the spectrum of the system
	$(X^\Z,\wt\mu_1,S)$,  so the former has no irrational spectrum since the same holds for the latter by
	Theorem~\ref{T:FH}. Furthermore, if $\wt\mu_1=\int_\Omega \wt\mu_{1,\omega}\, dP(\omega)$ is the ergodic decomposition of the measure
	$\wt\mu_1$, then the ergodic decomposition  of the measure $\wt\mu_d$  is  $\wt\mu_d=\int_\Omega \wt\mu_{1,\omega}\, dP_d(\omega)$
	for some probability measure $P_d$ that is absolutely continuous with respect to $P$. This implies that property $\text{(ii)}$ holds for the ergodic components of the measure $\wt\mu_d$ since it holds for the ergodic components of the measure $\wt\mu_1$ by Theorem~\ref{T:FH}.
\end{proof}

 Theorem~\ref{T:structure}  now follows by combining Proposition~\ref{P:factor} and Theorem~\ref{T:Structure-mud}.

  \subsection{An alternative proof of Theorem~\ref{T:structure} for some special cases}\label{SS:alternative}
  In some interesting special cases  we can prove Theorem~\ref{T:structure} (and hence Theorems~\ref{T:Sarnak1}-\ref{T:Sarnak2})
  using an alternative approach that avoids the  use of Theorem~\ref{T:TT}. We present the details below,
   let us emphasize though, that  this alternative approach   breaks down when
   $f_j(\P)$ is an infinite subset of the unit circle for some $j\in \{1,\ldots, \ell\}$, and
  we do not see how to avoid the use of Theorem~\ref{T:TT} in order to cover such cases.

\subsubsection{The case where $f_1(\P),\ldots, f_\ell(\P)$ are finite subsets of $\T$} Suppose  first that
 the multiplicative
  functions $f_1,\ldots, f_\ell$ are such  that  $f_j(\P)$ is a finite subset  of $\T$ for $j=1,\ldots, \ell$. Let  $(X,\mu,T)$ be a joint Furstenberg system associated with these  multiplicative functions  and a sequence of intervals $\bN$.
  Then there exist $c_1,\ldots, c_\ell\in \mathbb{U}$, a subset $A$ of $\P$,
  and a
  sequence of intervals  $\bM=([M_k])_{k\in\N}$ with $M_k\to\infty$, such that
  \begin{enumerate}
  	\item $f_j(p)=c_j$, $j=1\ldots, \ell$,  for all  $p\in A$;
  	
  	\item $d^{\log}_{\bM,\P}(A):=\lE_{\bM,p\in \P}\, {\bf 1}_A(p)$ exists and is positive;
  	
  	\item the averages
  	$$
  	\lE_{\bM, p\in A}\int_X\prod_{j=-m}^m T^{pj}F_j\,d\mu
  	$$
  	exist for all $m\in \N$ and $F_{-m},\ldots, F_m\in L^\infty(\mu)$,
  \end{enumerate}
where we used the notation
$$
\lE_{\bM, p\in A}\, a(p):=\lim_{k\to\infty}\lE_{p\in A\cap [M_k]}\, a(p)
$$
for $a\colon \P\to \C$, if  the limit exists.
  Using Theorem~\ref{T:Tao1} and property $\text{(i)}$, we get as in the proof of Proposition~\ref{P:factor}, using the factor map $\pi\colon X^\Z\to X$ defined by   $(\pi(\ux))(n):=(\overline{c_1} \, x_{n,1}(0),\ldots,  \overline{c_\ell}\,  x_{n, \ell}(0))$,
   that the system  $(X,\mu,T)$  is a factor of the system
  $(X^\Z,\mu^*,S)$ where  the measure $\mu^*$ is defined as follows:
  For every $m\in\N$ and all $F_{-m},\ldots,F_m\in L^\infty(\mu)$, we let
  \begin{equation}
  	\label{E:mu*}
  	\int_{X^\Z}\prod_{j=-m}^m F_j(x_j)\,d\mu^*(\ux):=		
  	\E^*_{\bM,p\in A}\int_X\prod_{j=-m}^m T^{pj}F_j\,d\mu,
  \end{equation}
where the limit on the right hand side exists by property $\text{(iii)}$.
  Since  $d^*_{\bM,\P}(A)>0$ (by property $\text{(ii)}$), we get that for every
  sequence $a\colon \P\to \R^+$ for which the limits below exist that
  $$
  \lE_{\bM,p\in A}a(p) \leq C\, \lim_{k\to\infty}\lE_{p\in \P\cap [M_k]}\, a(p)= C\,  \E_{p\in \P}\, a(p)
  $$
  where
  $C:=(d^{\log}_{\bM,\P}(A))^{-1}$ (the last identity holds because $\lim_{mkto\infty}\lE_{p\in \P\cap [M_k]}\, a(p)= \E_{p\in \P}\, a(p)$ since the last limit is assumed to exist).
  Using this estimate  in the case where  $a(p):=\int_X\prod_{j=-m}^m T^{pj}F_j\,d\mu$, $p\in\P$, is non-negative,    we get  that the measures $\wt\mu_1$ and $\mu^*$, defined by \eqref{E:mud} and \eqref{E:mu*} respectively, satisfy the estimate
  $$
  \int_{X^\Z}\prod_{j=-m}^m F_j(x_j)\,d\mu^*(\ux)\leq
  C\,  \E_{p\in \P}\int_X\prod_{j=-m}^m T^{pj}F_j\,d\mu= C \int_{X^\Z}\prod_{j=-m}^m F_j(x_j)\,d\wt\mu_1(\ux)
  $$
  for all $m\in\N$ and all non-negative $F_{-m},\ldots,F_m\in L^\infty(\mu)$.
  Hence, $\mu^*\leq C\, \wt \mu_1$, and as in the proof of Theorem~\ref{T:Structure-mud}
  we conclude that the system  $(X^\Z,\mu^*,S)$ satisfies properties $\text{(i)}$ and $\text{(ii)}$
  of Theorem~\ref{T:Structure-mud}. Since   $(X,\mu,T)$  is a factor of the system
  $(X^\Z,\mu^*,S)$, it also satisfies these two properties.

\subsubsection{The case of real valued multiplicative functions}
  Let   $(X,\mu,T)$ be a joint Furstenberg system associated with the  multiplicative functions $f_1,\ldots, f_\ell\colon \Z\to[-1,1]$ and a sequence of intervals $\bN$.
     Suppose first that  for some $j\in \{1, \ldots, \ell\}$  we have $\E_{n\in\N}|f_j(n)|=0$, say for $j=\ell$. Then all correlations involving the function $f_\ell$ are trivial.  As a consequence, the joint Furstenberg system associated with the functions $f_1,\ldots, f_\ell$ and $\bN$ is isomorphic (in the measure theoretic sense) to the   joint Furstenberg system associated with the functions $f_1,\ldots, f_{\ell-1}$ and $\bN$. Hence, it suffices to  prove Theorem~\ref{T:structure} in the case where $\E_{n\in\N}|f_j(n)|\neq 0$
     for  $j=1,\ldots, \ell$.   As in the first part of the proof of Proposition~\ref{P:constantone} we get
  that $\lE_{p\in \P}(1-|f_j(p)|)=0$
  for $j=1,\ldots, \ell$.  Hence,
  $f_j\sim f_j'$ for some $f_j'\colon \P\to \{-1,1\}$ for $j=1,\ldots, \ell$. As a consequence, in the identity of Theorem~\ref{T:Tao1} we can replace the weights
    $c_{p,m}=\prod_{j=1}^m g_j(p)$ with the weights $c'_{p,m}:=\prod_{j=1}^m g'_j(p)$, where for $j=1,\ldots, m$, if $g_j(p)$ is $f_k(p)$ or $\overline{f_k(p)}$ for some $k\in \{1,\ldots, \ell\}$, then $g_j'(p)$ is
     $f'_k(p)$ or $\overline{f'_k(p)}$ respectively.
       Using  this new identity, we deduce Theorem~\ref{T:structure} as in the case treated above where $f_j(\P)$ is finite for $j=1,\ldots, \ell$.

\section{Proof of Theorems~\ref{T:Sarnak1}-\ref{T:Sarnak2}}\label{S:123}
 We will use the following disjointness result, proved  in  \cite[Corollary~3.13]{FH18}:
 \begin{proposition}
 	\label{P:disjoint}
 	Let $(X,\mu,T)$ be  a system with ergodic components   isomorphic to
 	direct products of    infinite-step nilsystems and  Bernoulli systems. Let   $(Y,\nu,R)$ be a zero entropy system with   at most countably many  ergodic components.
 	\begin{enumerate}
 		\item
 		\label{it:disjoint-1}
 		If the two systems   have disjoint irrational spectrum, then for every  joining  $\sigma$  of the two systems  and   function $F\in L^\infty(\mu)$  that is orthogonal to $\CK(T)$, we have
 		$$
 		\int_{X\times Y} F(x)\, G(y)\, d\sigma(x,y)=0
 		$$
 		for every $G\in L^\infty(\nu)$.
 		
 		\item
 		\label{it:disjoint-2}
 		If the two systems  have no common eigenvalue except $1$,
 then  for every joining $\sigma$ of the two systems and function
 $G\in L^\infty(\nu)$  that is orthogonal in $L^2(\nu)$ to all $R$-invariant functions, we have
$$\int_{X\times Y} F(x)\, G(y)\, d\sigma(x,y)=0
$$
for every $F\in L^\infty(\mu)$.
 	\end{enumerate}
 \end{proposition}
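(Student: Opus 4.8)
The plan is to reduce, via the ergodic decomposition of joinings, to an ergodic joining $\sigma$ of an ergodic component of $(X,\mu,T)$ with an ergodic component of $(Y,\nu,R)$. Here the assumption that $Y$ has at most countably many ergodic components is what lets us reassemble the resulting identities over the (countable) decomposition of $\nu$, and rational eigenfunctions of $(X,\mu,T)$ restrict to rational eigenfunctions of its ergodic components, so after a routine approximation the orthogonality $F\perp\CK(T)$ is inherited at the level of components. Thus one may assume $(X,\mu,T)=(N\times B,\, m_N\times m_B,\, S_N\times S_B)$ with $N$ an infinite-step nilsystem and $B$ a Bernoulli system (either of which may be trivial), and that $(Y,\nu,R)$ is ergodic with zero entropy.

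Next I would peel off the Bernoulli coordinate. The factor of $(X\times Y,\sigma)$ generated by the $N$- and $Y$-coordinates is a joining of two zero-entropy systems, hence has zero entropy, whereas $B$ is a $K$-system; since $K$-systems are disjoint from zero-entropy systems, the $B$-coordinate is $\sigma$-independent of the pair consisting of the $N$- and $Y$-coordinates. Writing $F=F_0+F_1$ with $F_0:=\E[F\mid N]$ and $F_1:=F-F_0$, this independence yields $\E_\sigma[F_1\mid N\vee Y]=\E[F_1\mid N]=0$, hence $\int F_1\,G\,d\sigma=0$ for every $G\in L^\infty(\nu)$. It remains to handle $F_0$, which depends only on the $N$-coordinate, so that only $\sigma$ restricted to $N\times Y$ matters; since $B$ is weakly mixing, $F_0\perp\CK(N)$.

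Now split $F_0=F_0'+F_0''$, where $F_0':=\E[F_0\mid\mathcal K(N)]$ is the projection onto the \emph{full} Kronecker factor $\mathcal K(N)$ of $N$ and $F_0''\perp\mathcal K(N)$. Using $F_0\perp\CK(N)$, the function $F_0'$ lies in the closed linear span of the eigenfunctions of $N$ whose eigenvalues are not roots of unity, so it suffices to treat a single such eigenfunction $\chi$, say $\chi\circ S_N=\lambda\chi$. Then $\E_\sigma[\chi\mid Y]$ satisfies $\E_\sigma[\chi\mid Y]\circ R=\lambda\,\E_\sigma[\chi\mid Y]$, so it is either $0$ or an eigenfunction of $(Y,\nu,R)$ with eigenvalue $\lambda$; as $\lambda$ lies in the irrational spectrum of $X$ and the two systems have disjoint irrational spectrum, it must be $0$, whence $\E_\sigma[F_0'\mid Y]=0$ and $\int F_0'\,G\,d\sigma=0$. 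The term $F_0''\perp\mathcal K(N)$ is the heart of the matter: here one invokes the fact that an ergodic joining of an ergodic (inverse limit of) nilsystem(s) with a zero-entropy system is relatively independent over the joining of the Kronecker factor of the nilsystem with the zero-entropy system, which gives $\E_\sigma[F_0''\mid Y]=0$. This last step, resting on the Host--Kra structure theory and on equidistribution of polynomial orbits on nilmanifolds, is the main obstacle; the rest is soft. Summing the three contributions gives part (i).

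Finally, part (ii) follows by combining (i) with an eigenvalue argument on the Kronecker part. Given an ergodic joining $\sigma$ and $G\in L^\infty(\nu)$: for $F\perp\mathcal K(T)$ we get $\int F\,G\,d\sigma=0=\bigl(\int F\,d\mu\bigr)\bigl(\int G\,d\nu\bigr)$ from (i), since "no common eigenvalue except $1$" is stronger than having disjoint irrational spectrum; and for a nonconstant eigenfunction $F$ of $X$ with eigenvalue $\lambda\neq1$, the no-common-eigenvalue hypothesis forces $\E_\sigma[F\mid Y]=0$, so again $\int F\,G\,d\sigma=0=\bigl(\int F\,d\mu\bigr)\bigl(\int G\,d\nu\bigr)$. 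Since constants, nonconstant eigenfunctions of $X$, and $\mathcal K(T)^\perp$ together span a dense subspace of $L^2(\mu)$, we conclude $\sigma=\mu\times\nu$, i.e.\ the two systems are disjoint.
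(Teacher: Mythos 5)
First, a remark on the comparison itself: this paper does not prove Proposition~\ref{P:disjoint} at all --- it is imported verbatim from \cite[Proposition~3.12]{FH18} --- so your attempt has to be measured against the proof given there. Your overall skeleton does match that proof in outline: the reduction to ergodic joinings of ergodic components (where the countability of the ergodic components of $(Y,\nu,R)$ is what allows the spectral hypothesis and the orthogonality $F\perp\CK(T)$ to descend to components, given the fibrewise definition of spectrum in \cite[Appendix~A]{FH18}); the use of ``a joining of two zero-entropy systems has zero entropy'' plus ``Bernoulli systems are disjoint from zero-entropy systems'' to make the $B$-coordinate independent of the $N$- and $Y$-coordinates; the eigenfunction argument for the Kronecker-measurable part $F_0'$; and the derivation of part (ii) from part (i). All of that is correct and is essentially what \cite{FH18} does.

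The gap is in the step you yourself call the heart of the matter. The ``fact'' you invoke --- that an ergodic joining of an ergodic (inverse limit of) nilsystem(s) $N$ with a zero-entropy system $Y$ is relatively independent over $\mathcal K(N)\vee Y$ --- is false. Take $N=Y$ to be any nontrivial ergodic $2$-step nilsystem (it has zero entropy, being distal) and let $\sigma$ be the diagonal self-joining, which is ergodic. For any $F\perp\mathcal K(N)$ with $F\neq 0$ one has $\int F\otimes\overline F\,d\sigma=\int |F|^2\,d\mu\neq 0$, whereas relative independence over $\mathcal K(N)\vee Y$ would force this integral to vanish. This example violates the disjoint-irrational-spectrum hypothesis, which shows that the spectral hypothesis must be used in handling $F_0''$ as well, and not only in the eigenfunction argument for $F_0'$. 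That is exactly where the specific structure of nilsystems (as opposed to mere distality or zero entropy) enters in \cite{FH18}: one must show that, when the irrational spectra are disjoint, no coupling between $Y$ and the part of $N$ lying above $\CK(N)$ can occur, and this requires climbing the tower of isometric extensions of the nilsystem and ruling out the creation of new common eigenvalues at each stage. As written, your argument substitutes an incorrect general principle for the one genuinely hard step, so the proof does not go through.
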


\subsection{Proof of Theorem~\ref{T:Sarnak1}} We follow the argument used in \cite[Section~3.9]{FH18}.
Arguing  by contradiction, suppose that under the assumptions
 of Theorem~\ref{T:Sarnak1}  we do not have uniform convergence to $0$ of the related averages. Then   there exist a strongly aperiodic multiplicative function $f\colon \N\to \mathbb{U}$,  which we extend to $\Z$ in an arbitrary way, a  topological  dynamical system $(Y,R)$,  positive
integers   $N_k\to\infty$,  points $y_k\in Y$, $k\in \N$,
and a function $g_0\in C(Y)$ such that the averages
$$
	\lE_{n\in [N_k]}\, g_0(R^ny_k)\, f(n)
$$
converge to a non-zero number as $k\to\infty$.
After passing to a subsequence  which we denote again by  $([N_k])_{k\in\N}$,  we can further assume that
 the averages
$\lE_{n\in[N_k]}\delta_{R^ny_k}$ converge  (as $k\to\infty$) weak-star to an $R$-invariant  probability measure
$\nu$ and  the limit
\begin{equation}
	\label{E:converge}
	\lim_{k\to\infty} \lE_{n\in[N_k]}\, g(R^ny_k)\prod_{j=1}^m\, f_j(n+h_j)
\end{equation}
exists for all $m\in\N$,  $h_1,\dots,h_m\in\Z$, $f_1,\ldots, f_m\in \{f,\overline{f}\}$, and $g\in C(Y)$.
 Note that, by our assumptions, the system $(Y,\nu,R)$ has  zero entropy and  at most countably many  ergodic components.

Let $X:=\mathbb{U}^\Z$, $T\colon X\to X$ be the shift transformation,  and $x_0\in X$  be defined by
$$
x_0(n):=f(n), \quad n\in\Z.
$$ Then the convergence~\eqref{E:converge} implies  that the limit
$$
\lim_{k\to\infty}\lE_{n\in[N_k]}\,g(R^ny_k)\, \bigl(\prod_{j=1}^m G_{h_j}\bigr)(T^nx_0)
$$
exists for all $m \in \N$, $h_1,\dots,h_m\in\Z$, $g\in C(Y)$, and  $G_h\in \{F_h,\overline{F_h}\}$, $h\in \Z$, where  $F_h(x)=x(h)$, $x\in X$,  $h\in\Z$. Since the algebra generated by  the functions $F_h, \overline{F_h}$, $h\in \Z$,  is dense in $C(X)$  with the uniform topology, we deduce  that the sequence of measures
$$
\lE_{n\in [N_k]}\delta_{(T^nx_0,R^ny_k)}, \quad k\in\N,
$$
converges weak-star to some probability measure $\sigma$ on $X\times Y$ that satisfies
\begin{equation}
	\label{E:sigmaid}
\lim_{k\to\infty}	\lE_{n\in[N_k]}\,g(R^ny_k)\, \prod_{j=1}^m f_j(n+h_j)=\int_{X\times Y} \prod_{j=1}^m G_{h_j}(x)\,g(y)\,d\sigma(x,y)
\end{equation}
for all $m \in \N$, $h_1,\dots,h_m\in\Z$,  $f_1,\ldots, f_m\in \{f,\overline{f}\}$, and $g\in C(Y)$, where $G_h$ is $F_h$ or $\overline{F_h}$ according to whether $f_j$ is $f$ or $\overline{f}$.
By construction, $\sigma$ is invariant under $T\times R$.

The projection of $\sigma$ on $Y$ is the weak-star limit of the sequence of measures $\lE_{n\in [N_k]}\delta_{R^ny_k}$, $k\in\N$, which is the measure  $\nu$,  and thus the corresponding  measure preserving system  has zero entropy and  at most countably many  ergodic components.

The projection of $\sigma$ on $X$ is the weak-star limit of the sequence of measures $\lE_{n\in [N_k]}\delta_{T^nx_0}$, $k\in\N$.
It is thus a $T$-invariant measure  $\mu$ which is the Furstenberg measure associated with $f$ and $\bN=([N_k])_{k\in\N}$ by Proposition~\ref{P:correspondence}. Hence,   $\sigma$ is a joining of the systems  $(X,\mu,T)$ and $(Y,\nu,R)$.

By the $\ell=1$ case of Proposition~\ref{P:factor} and its proof, there exists $d\in \N$ such that $(X,\mu,T)$
is a factor of the system $(X^\Z,\wt \mu_d, S)$,
with factor map  $\pi \colon X^\Z\to X$ given by
$$
(\pi(\ux))(k):=x_k(0), \quad \ux=(x_n)_{n\in\Z}\in X^\Z, \ k\in \Z.
$$
We define the joining $\wt\sigma$  of the systems  $(X^\Z, \wt \mu_d, S)$ and $(Y,\nu,R)$ by
$$
	\int_{X^\Z\times Y} H(\ux)\cdot g(y) \, d\wt\sigma(\ux,y):= \int_{X\times Y} \E_{\wt\mu_d}(H\mid X)(x) \cdot g(y) \, d\sigma(x,y)
$$
for every $H\in L^\infty(\wt \mu_d)$ and $g\in L^\infty(\nu)$, where
$\E_{\wt\mu_d}(H\mid X)$ in $L^1(\nu)$ is determined by the property
$\int_{A}\E_{\wt\mu_d}(H\mid X)\, d\mu=\int_{\pi^{-1}(A)} H\, d\wt \mu_d$ for every $A\in \CX$.

We show now  that the systems $(X^\Z,\wt\mu_d, S)$ and $(Y,\nu,R)$ verify the   assumptions of  part $\text{(i)}$  of Proposition~\ref{P:disjoint}.
By Theorem~\ref{T:Structure-mud},  the system $(X^\Z,\wt\mu_d,S)$  has no irrational spectrum and its
ergodic components are isomorphic to direct products of  infinite-step nilsystems and  Bernoulli systems.
We  show next that the function $ F'_0:=F_0\circ\pi$ is
orthogonal to the rational Kronecker factor of the system $(X^\Z,\wt \mu_d, S)$,
in fact,  we establish the stronger property that  $ F'_0$ is orthogonal to the Kronecker factor of this system.
 By a well known consequence of the spectral theorem for unitary operators,
this  is equivalent to establishing that
\begin{equation}\label{E:Kronecker}
	\E_{n\in\N} \Big|\int_{X^\Z}   F'_0\cdot S^n  \overline{ F'_0}\, d\wt \mu_d\Big|=0.
\end{equation}
By the Definition~\ref{D:tilde} of the measure $\wt\mu_d$   and since for $n\in \N$ we have $ F'_0(\ux) \, \overline{ F'_0}(S^n\ux)=F_0(x_0) \, \overline{F_0}(x_n)$,  we get for every $n\in \N$ that
$$
\int_{X^\Z} F'_0\cdot S^n   \overline{F'_0}\, d\wt \mu_d=\E_{p\in\P_d}\int_X  F_0\cdot T^{pn}  \overline{F_0}\, d \mu.
$$
By \eqref{E:correspondence}, for  every $h\in \N$ we have
$$
\int_X F_0\cdot T^h\overline{F_0}\, d \mu=	\lE_{n\in {\bN} }\,  f(n)\, \overline{f(n+h)}=0
$$
where  the vanishing of the average follows from Theorem~\ref{T:Tao} and our assumption that $f$ is strongly aperiodic. Combining the above identities we get \eqref{E:Kronecker}.

By part $\text{(i)}$  of Proposition~\ref{P:disjoint}, we have
$$
0=\int_{X^\Z\times Y}  F'_0(\ux) \cdot g_0(y)\,d\wt\sigma(\ux,y)= \int_{X\times Y} F_0(x)\cdot  g_0(y)\,d\sigma(x,y)=\lim_{k\to\infty} \lE_{n\in[N_k]}\,g_0(R^ny_k)\, f(n)
$$
where the last identity follows by~\eqref{E:sigmaid}. This contradicts our initial assumption that
$\lim_{k\to\infty}\lE_{n\in [N_k]}\, g_0(R^ny_k)\, f(n)\neq 0$
 and  completes the proof.
\qed

\subsection{Proof of Theorem~\ref{T:Sarnak2}}
We follow the argument used in \cite[Section~3.11]{FH18}.

Arguing  by contradiction, suppose that the conclusion of Theorem~\ref{T:Sarnak2} fails.  Then   there exist
$\ell\in \N$,  multiplicative functions $f_1,\ldots, f_{\ell}\colon \N\to \mathbb{U}$, which we extend to $\Z$
in an arbitrary way,  a  topological dynamical system $(Y,R)$, a point $y_0\in Y$ that is generic for a measure $\nu$ such that the system $(Y,\nu,R)$ has  zero entropy and at most  countably many ergodic components all of which are totally ergodic, and   a function  $g_0\in C(Y)$ orthogonal in $L^2(\nu)$ to all $R$-invariant
functions, such that  for some $h_{0,1},\dots,h_{0,\ell}\in\Z$  the identity \eqref{E:weightedElliott} fails, namely, the averages
\begin{equation}\label{E:000}
	\lE_{n\in [N]} \,g_0(R^ny_0)\, \prod_{j=1}^{\ell}f_j(n+h_{0,j})
\end{equation}
do not converge to $0$ as $N\to\infty$.

Let $X:=(\mathbb{U}^{\ell})^\Z$, $T\colon X\to X$ be the shift transformation,  and $x_0\in X$  be defined by
$$
x_0(n):=(f_1(n),\ldots, f_{\ell}(n)), \quad n\in\Z.
$$
If  $x=(x_1(n), \ldots, x_\ell(n))_{n\in\Z}\in X$, where $x_j(n)\in U$ for $j=1,\ldots, \ell$, $n\in \Z$,  we let
$$
F_{h, j}(x):=x_j(h),\quad  \ h\in \Z,\  j\in \{1,\ldots, \ell\}.
$$
As in the proof of Theorem~\ref{T:Sarnak1}  in the previous subsection,
we define a sequence of intervals $\bN=(N_k)_{k\in\N}$,  with $N_k\to\infty$,  such that the  averages \eqref{E:000}, taken along $\bN$,  converge to some non-zero number,  and  a measure $\sigma$ on $X\times Y$ which is the weak-star limit of the sequence of measures
$$
\lE_{n\in [N_k]}\delta_{(T^nx_0,R^ny_0)}, \quad k\in\N.
$$
In particular, the identity
\begin{equation}
	\label{E:identity0}
	\lE_{n\in\bN}\,g(R^ny_0)\, \prod_{j=1}^\ell f_j(n+h_j)=\int_{X\times Y} \prod_{j=1}^\ell F_{h_j, j}(x)\,g(y)\,d\sigma(x,y)
\end{equation}
holds for all $h_1,\ldots, h_\ell\in \mathbb{Z}$, and $g\in C(Y)$.

By construction, $\sigma$ is invariant under $T\times R$.
By  assumption and the definition of genericity, the projection of $\sigma$ on $Y$ is the measure $\nu$, and thus   the system  $(Y,\nu,R)$ has zero entropy,   at most countably many  ergodic components, and no rational eigenvalue except  $1$. Moreover, the projection of $\sigma$ on $X$ is the weak-star limit of the sequence of measures $\lE_{n\in [N_k]}\delta_{T^nx_0}$, $k\in\N$.
It is thus a $T$-invariant measure    $\mu$ which  is the joint Furstenberg measure associated with the multiplicative functions $f_1,\ldots, f_\ell$  and $\bN$ by Proposition~\ref{P:correspondence}. Hence, by Proposition~\ref{P:factor}, for some $d\in \N$ the system $(X,\mu,T)$
is a factor of the system $(X^\Z,\wt \mu_d, S)$. By Theorem~\ref{T:structure},  the system $(X^\Z,\wt\mu_d,S)$  has no irrational spectrum and its ergodic components are isomorphic to
direct products of   infinite-step nilsystems and  Bernoulli systems.

From the previous discussion  it follows that    the function $g_0$ and the systems $(X^\Z,\wt \mu_d, S)$ and  $(Y,\nu,R)$
 satisfy the hypothesis of the second part    of  Proposition~\ref{P:disjoint}. Hence, for every joining $\wt{\sigma}$ of these
 systems and $\tilde{f}\in  L^\infty(\wt \mu_d)$, we have $\int \tilde{f}(\ux)\, g_0(y)\, d\wt{\sigma}(\ux,y)=0$.
  Since $\sigma$ is a joining of the systems $(X,\mu,T)$ and $(Y,\nu,R)$, and
 the system $(X,\mu,T)$ is a factor of  $(X^\Z,\wt \mu_d, S)$, the measure $\sigma$ can be lifted to a joining $\wt\sigma$ of $(X^\Z,\wt \mu_d,
 S)$ and  $(Y,\nu,R)$. It follows that for every $f\in  L^\infty(\mu)$ we
 have $\int f(x)\, g_0(y)\, d\sigma(x,y)=0$.
We deduce that
$$
 \lE_{n\in \bN} \,g_0(R^ny_0)\, \prod_{j=1}^{\ell}f_j(n+h_{0,j})
 =\int_{X\times Y} \prod_{j=1}^{\ell}F_{h_{0,j},j}(x)\cdot g_0(y)\,d\sigma(x,y)=0.
$$
This contradicts our assumption that $\lE_{n\in \bN} \,g_0(R^ny_0)\, \prod_{j=1}^\ell f_j(n+h_{0,j})\neq 0$ and completes the proof of  Theorem~\ref{T:Sarnak2}.

\subsection{Block complexity and proof of Theorem~\ref{T:superlinear}}
 \label{SS:Superlinear}

We start with some definitions.
Let $A$ be  a non-empty finite set.  The set $A$ is endowed with the discrete topology and $A^\Z$ with the product topology and with the shift $T$. For $n\in\N$, a \emph{word of length $n$}  is a sequence $u=u_1\dots u_n$ of $n$ letters where $u_1,\ldots, u_n\in A$, and we write $[u]=\{x\in A^\Z\colon x_1\dots x_n=u_1\dots u_n\}$.
A \emph{subshift}  is a closed non-empty $T$-invariant subset $X$ of $A^\Z$. It is \emph{transitive} if it has at least one dense orbit under $T$.

Let $(X,T)$ be a transitive subshift that is equal  to the closed orbit of some point $\omega\in A^\Z$.
For every $n\in \N$ we let $L_n(X)$ denote the set of words $u$ of length $n$ such that $[u]\cap X\neq\emptyset$.   Then $L_n(X)$ is also the set of words of length $n$ that occur (as consecutive values) in $\omega$. Note that  the set $L(X):=\bigcup_{n\in \N}L_n(X)$ determines $X$.    The \emph{block complexity} of $X$ or of $\omega$ is defined by $p_X(n)=|L_n(X)|$ for $n\in\N$.
We say that the subshift $(X,T)$ (or the sequence $\omega$) \emph{has linear block growth} if
$\liminf_{n\to\infty} p_X(n)/n<\infty$. We are going to use the following consequence of a result from  \cite{CK} (or \cite[Theorem 7.3.7]{FMo}), that was obtained in  \cite[Section~7.1]{FH18}:
\begin{proposition}
	\label{P:lineargrowth}
	Let $(X,T)$ be a transitive subshift with linear block growth. Then $(X,T)$ admits only finitely many ergodic invariant measures.
\end{proposition}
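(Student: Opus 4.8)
The plan is to prove the sharper quantitative statement that the number of ergodic invariant measures is finite by showing that the set $M(X)$ of all $T$-invariant Borel probability measures on $X$ is \emph{finite-dimensional}. Recall that, with the weak-$*$ topology, $M(X)$ is a compact convex set and in fact a Choquet simplex whose extreme points are precisely the ergodic measures. Since a finite-dimensional Choquet simplex is an honest geometric simplex, having exactly $\dim M(X)+1$ vertices, it suffices to bound $\dim M(X)$; this will simultaneously give the explicit bound $\lfloor\liminf_n p_X(n)/n\rfloor+1$ on the number of ergodic measures.

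First I would encode invariant measures by their word frequencies. For each $n$ let $\Phi_n\colon M(X)\to\R^{L_n(X)}$ be the continuous affine map $\mu\mapsto(\mu([w]))_{w\in L_n(X)}$. Since the cylinder sets generate the Borel $\sigma$-algebra, the family $(\Phi_n)_n$ separates points of $M(X)$; and since length-$n$ frequencies are marginals of length-$(n+1)$ frequencies, via $\mu([w])=\sum_{b\in A}\mu([wb])$, each $\Phi_n$ factors through $\Phi_{n+1}$ by a fixed linear map. Consequently $\Phi_n(M(X))$ is a linear image of $\Phi_{n+1}(M(X))$, so $\dim\Phi_n(M(X))$ is nondecreasing in $n$, and because the $\Phi_n$ jointly separate points of the compact convex set $M(X)$ one gets $\dim M(X)=\sup_n\dim\Phi_n(M(X))$.

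The crux is the bound $\dim\Phi_{n+1}(M(X))\le p_X(n+1)-p_X(n)$, which I would obtain from the Rauzy graph $G_n$: its vertex set is $L_n(X)$ and its edge set is $L_{n+1}(X)$, where the edge $wb=aw'$ joins the vertex $w$ to the vertex $w'$. By shift-invariance, the length-$(n+1)$ frequency vector $(\mu([e]))_{e\in L_{n+1}(X)}$ of an invariant measure is a nonnegative \emph{circulation} on $G_n$ (at each vertex $w$ one has $\sum_{\mathrm{in}}=\sum_{\mathrm{out}}=\mu([w])$) of total weight $1$. The space of real circulations on a graph has dimension equal to its first Betti number $|E|-|V|+c$, with $c$ the number of connected components; here transitivity forces $G_n$ to be connected, since the window-walk of a transitive point visits every vertex, so $c=1$ and the circulation space has dimension $p_X(n+1)-p_X(n)+1$. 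Imposing the single affine normalization "total weight $=1$" then gives $\dim\Phi_{n+1}(M(X))\le p_X(n+1)-p_X(n)$.

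Finally I would feed in the hypothesis. Since $p_X$ is nondecreasing, if the increments $p_X(n+1)-p_X(n)$ were $\ge L+1$ for all large $n$ then $p_X(n)/n$ would tend to a limit $\ge L+1$; hence $\liminf_n p_X(n)/n<\infty$ forces $p_X(n+1)-p_X(n)\le\liminf_n p_X(n)/n$ for infinitely many $n$. For each such $n$ the previous step bounds $\dim\Phi_{n+1}(M(X))$ by $\liminf_m p_X(m)/m$, and by the monotonicity of $\dim\Phi_n(M(X))$ this bound propagates to all indices, yielding $\dim M(X)\le\liminf_n p_X(n)/n<\infty$. The main obstacle, and the only place the complexity hypothesis is genuinely used, is precisely the circulation identity of the third paragraph together with the passage $\dim M(X)=\sup_n\dim\Phi_n(M(X))$: it is the monotonicity, not a fiberwise estimate (which would merely accumulate over $n$), that converts "small increments infinitely often" into a uniform dimension bound. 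This is exactly the Boshernitzan-type counting underlying \cite{CK} and \cite[Theorem~7.3.7]{FMo}, which I would either import directly or establish through the short flow argument sketched above.
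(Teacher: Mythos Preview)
Your argument is correct and is essentially the Boshernitzan--Cyr--Kra argument that the paper invokes by citation: the paper does not prove Proposition~\ref{P:lineargrowth} but quotes it from \cite{CK} (alternatively \cite[Theorem~7.3.7]{FMo}), as extracted in \cite[Section~7.1]{FH18}, noting that the minimal case is due to Boshernitzan~\cite{Bos}. What you have written is a self-contained account of exactly that argument---the Rauzy-graph circulation bound $\dim\Phi_{n+1}(M(X))\le p_X(n+1)-p_X(n)$, combined with the monotonicity of $n\mapsto\dim\Phi_n(M(X))$ and the Choquet-simplex identification of ergodic measures with vertices. The one step that deserves an extra line if you write this up is the identity $\dim M(X)=\sup_n\dim\Phi_n(M(X))$: separation of points alone does not give it (project the Hilbert cube onto its individual coordinates), but here the compatibility $\Phi_n=L_n\circ\Phi_{n+1}$ forces the nondecreasing integer sequence $\dim\Phi_n(M(X))$, once bounded, to stabilize with eventually bijective transition maps, so that some $\Phi_N$ is already injective on $M(X)$.
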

This result was proved in~\cite{Bos} under the stronger hypothesis that $(X,T)$ is minimal.

\begin{proof}[Proof of Theorem~\ref{T:superlinear}] We argue as in \cite[Section~7.2]{FH18} where a similar result was proved for the Liouville function. Let $A$ be the range of $f$, which  we have assumed to be a finite subset of $\mathbb{U}$.
Suppose  that $f$ has linear block growth. We extend $f$ to a two sided sequence, which we denote by $y_0\in A^\Z$, by letting $y_0(n):=1$ for non-positive $n\in \Z$;  then the extended sequence  still has linear block growth.  Let $Y$ be the closed orbit of $y_0$ in $A^\Z$ and let $R$ be the shift on $Y$. Then $(Y,R)$ is a transitive subshift, and since it has linear block growth  it has zero topological entropy. Moreover,  by Proposition~\ref{P:lineargrowth}   this system admits only finitely many ergodic invariant measures. Note that for every $n\in \N$ we have $f(n)=F_0(R^ny_0)$, where $F_0\colon A^\Z\to\mathbb{U}$ is the map defined by $F_0(y):= y(0)$ for $y=(y(n))_{n\in\Z}\in Y$. By Theorem~\ref{T:Sarnak1} we get
$$
0= %%\lim_{N\to\infty} \frac 1{\log N}\sum_{n=1}^N
\lE_{n\in\N} \, \overline{F_0}(R^ny_0)\, f(n) =\lE_{n\in\N}\, |f(n)|^2\neq 0,
$$
where we used our assumption that  $f$ does not converge to zero in logarithmic density.  We have thus established a contradiction and the proof is complete.
\end{proof}

\end{document}